\def\cxymatrix#1{\xy*[c]\xybox{\xymatrix#1}\endxy}
\theoremstyle{plain}
\newtheorem{theorem}{Theorem}[section]
\theoremstyle{definition}
\newtheorem{proposition}[theorem]{Proposition}
\newtheorem{lemma}[theorem]{Lemma}
\newtheorem{definition}[theorem]{Definition}
\newtheorem{remark}[theorem]{Remark}
\newtheorem{corollary}[theorem]{Corollary}
\newtheorem{conjecture}[theorem]{Conjecture}
\newtheorem{example}[theorem]{Example}
\newtheorem{notation}[theorem]{Notation}
\newtheorem{convention}[theorem]{Convention}
\numberwithin{equation}{section}
\newcommand{\Z}{\mathbb Z}
\newcommand{\Q}{\mathbb Q}
\newcommand{\R}{\mathbb R} 
\newcommand{\C}{\mathbb C}
\renewcommand{\L}{\mathcal L}
\newcommand{\B}{\mathcal B}
\newcommand{\A}{\mathcal A}
\newcommand{\X}{\mathcal X}
\renewcommand{\P}{\mathcal P}
\DeclareMathOperator{\GL}{GL}
\DeclareMathOperator{\PSL}{PSL}
\DeclareMathOperator{\SL}{SL}
\DeclareMathOperator{\Real}{Re}
\DeclareMathOperator{\Imag}{Im}
\DeclareMathOperator{\ReIm}{ReIm}
\DeclareMathOperator{\id}{id}
\DeclareMathOperator{\Spec}{Spec}
\DeclareMathOperator{\Li}{Li}
\DeclareMathOperator{\Log}{Log}
\DeclareMathOperator{\Ker}{Ker}
\DeclareMathOperator{\Coker}{Coker}
\DeclareMathOperator{\Ext}{Ext}
\DeclareMathOperator{\Sym}{Sym}
\DeclareMathOperator{\ind}{ind}
\DeclareMathOperator{\Gr}{Gr}
\DeclareMathOperator{\cut}{cut}
\DeclareMathOperator{\odd}{odd}
\DeclareMathOperator{\even}{even}
\DeclareMathOperator{\sgn}{sgn}
\DeclareMathOperator{\Alt}{Alt}
\DeclareMathOperator{\AssGr}{gr}
\DeclareMathOperator{\Coeff}{Coeff}
\DeclareMathOperator{\symb}{symb}
\title[Holomorphic polylogarithms]{Holomorphic polylogarithms and Bloch complexes}
\author{Christian K. Zickert}
\address{University of Maryland \\
         Department of Mathematics \\
         College Park, MD 20742-4015, USA \newline
         {\tt \url{http://www.math.umd.edu/~zickert}}}
\email{zickert@math.umd.edu}
\thanks{C.~Z.~was supported in part by DMS-1711405 \\
\newline
2010 {\em Mathematics Classification.} Primary 11G55. 
Secondary 13F60, 58J28, 19E15. 
\newline
{\em Key words and phrases: Polylogarithms, Bloch complexes, cluster algebras, quiver mutations, regulators, motivic cohomology, Cheeger-Chern-Simons classes.}
}
\date{}
\begin{document}

\begin{abstract}
For an integer $n\geq 2$ we define a polylogarithm $\widehat\L_n$, which is a holomorphic function on the universal abelian cover of $\C\setminus\{0,1\}$ defined modulo $(2\pi i)^n/(n-1)!$.
We use the formal properties of its functional relations to define groups $\widehat\B_k(\widehat F)$ lifting Goncharov's Bloch groups $\B_k(F)$ of a field $F$, and show that they fit into a complex $\widehat\Gamma(F,n)$ lifting Goncharov's Bloch complex $\Gamma(F,n)$. When $F=\C$ we show that the imaginary part (when $n$ is even) or real part (when $n$ is odd) of $\widehat\L_n$ agrees with a real single valued polylogarithm $\L_n$ on the group $H^1(\widehat\Gamma(\C,n))$. When $n=2$, this group is Neumann's extended Bloch group. Goncharov's complex conjecturally computes the rational motivic cohomology of $F$, and one may speculate whether the extended complex computes the integral motivic cohomology. Finally, we use $\widehat\L_3$ to construct a lift of Goncharov's map $H_5(\SL(3,\C))\to\R$ to a complex valued map whose real part agrees with that of Goncharov. The lift makes use of the cluster ensemble structure on the Grassmannian $\Gr(3,6)$. 
\end{abstract}
\maketitle

\section{Introduction}
For a natural number $n$, the polylogarithm of weight $n$ is defined by the power series
\begin{equation}\label{eq:LiDef}\Li_n(z)=\sum_{k=1}^\infty z^k/k^n,\qquad \vert z\vert\leq 1.
\end{equation}
It extends holomorphically to $\C\setminus(1,\infty)$, but is multivalued on $\C$ with branch points at $0$ and $1$. There are several real single valued analogues of the polylogarithm (see~\cite{ZagierPolylogs} for definitions and basic properties). We shall only consider
\begin{equation}\label{eq:LnGonDef}
\L_n(z)=\mathfrak R_n\left (\sum_{r=0}^{n-1}\frac{2^rB_r}{r!}\Li_{n-r}(z)(\log\vert z\vert)^r\right ),
\end{equation}
where $\mathfrak R_n(x)$ denotes the real part of $x$ when $n$ is odd and the imaginary part when $n$ is even, and $B_0=1$, $B_1=-1/2$, $B_2=1/6$, $B_3=0$, $B_4=-1/30$, etc., are the Bernoulli numbers. The functions $\L_n(z)$ are continuous on $\C P^1=\C\cup\{\infty\}$, and
\begin{equation}
\L_2(z)=\Imag(\Li_2(z))+\Imag(\log(1-z))\log(\vert z\vert)
\end{equation}
is the Bloch-Wigner dilogarithm.
\begin{notation} For a field $F$, $F^*=F\setminus\{0\}$ denotes the unit group and $P^1_F=F\cup\{\infty\}$. For an abelian group $A$, $A_\Q$ denotes $A\otimes\Q$. All tensor products and exterior powers are over $\Z$ unless otherwise specified. For a set $X$, $\Z[X]$ denotes the free abelian group generated by $X$, and for $x\in X$ the corresponding generator in $\Z[X]$ is denoted $[x]$. 

\end{notation}

For a field $F$, Goncharov~\cite{GoncharovConfigurations} has defined complexes $\Gamma(F,n)$ of the form
\begin{equation}\label{eq:GammaComplex}
\cxymatrix{@C=2em{\B_n(F)\ar[r]^-{\delta}&\cdots\ar[r]^-{\delta}&\B_{n-k}(F)\otimes\wedge^k(F^*)\ar[r]^-{\delta}&\cdots\ar[r]^-{\delta}&\B_2(F)\otimes\wedge^{n-2}(F^*)\ar[r]^-{\delta}&\wedge^n(F^*).}}
\end{equation}
Each group $\B_k(F)$ is the quotient of $\Z[P^1_F]$ by a subgroup $R_k(F)$, which is defined inductively, and may be thought of as formal functional relations for $\L_k$. For example, for any $x,y\in F$ we have an element  (where $\frac{0}{0}=1$, $\frac{1}{0}=\infty$, etc.)
\begin{equation}\label{eq:FiveRelIntro}
[x]-[y]+[\frac{y}{x}]-[\frac{1-x^{-1}}{1-y^{-1}}]+[\frac{1-x}{1-y}]\in R_2(F)
\end{equation}
which corresponds to the well known functional relation
\begin{equation}\label{eq:FiveRelD}
\L_2(x)-\L_2(y)+\L_2(\frac{y}{x})-\L_2(\frac{1-x^{-1}}{1-y^{-1}})+\L_2(\frac{1-x}{1-y})=0,\qquad x,y\in \C.
\end{equation}

\begin{conjecture}[{Goncharov~\cite[Conj.~1.20]{GoncharovConfigurations}}]\label{conj:GonConj0} For $n=2$ and $n=3$, we have
\begin{enumerate}[label=\alph*)]
\item\label{R2Conj} $R_2(F)$ is generated by the five term relations~\eqref{eq:FiveRelIntro}.
\item\label{R3Conj} $R_3(F)$ is generated by the elementary relations $[x]-[x^{-1}]$, and $[x]+[1-x]+[1-x^{-1}]-[1]$ together with a 3-variable relation of the form $R_3(x,y,z)-3[1]$, where $R_3(x,y,z)$ is an explict element with 22 terms (see also~\cite{ZagierPolylogs}).
\end{enumerate}
\end{conjecture}
\begin{remark}For $n>3$ few elements in $R_n(F)$ are known other than $[x]+(-1)^n[x^{-1}]$ which is in $R_n(F)$ for any $n$. Gangl~\cite{Gangl931} has constructed a 931 term relation in $R_4(F)$; see also \cite{GoncharovRudenko} for an approach to describing $R_4(F)$ via cluster algebras. A discussion of Conjecture~\ref{conj:GonConj0}\ref{R2Conj} is given in~\cite{deJeuDilogRels}.
\end{remark}

\begin{conjecture}[{Goncharov~\cite[Conj.~2.1]{GoncharovMotivicGalois}}]\label{conj:GonConj1}
There are rational isomorphisms
\begin{equation}\label{eq:GoncharovConj}
H^i(\Gamma(F,n))_\Q\cong K^{(n)}_{2n-i}(F)_\Q,\qquad i=1,\dots, n
\end{equation}
where $K^{(p)}_q(F)=\AssGr_\gamma^p K_q(F)$ are the associated graded groups for the $\gamma$-filtration on $K_q(F)$.
\end{conjecture}
Goncharov further speculates (see~\cite{GoncharovRegulators}) that when $F=\C$, the map $\L_n$ (defined on $\B_n(\C)$ by linear extension) agrees with the Borel regulator map $b_n$, i.e.~that there is a commutative diagram
\begin{equation}\label{eq:RegnLn}
\cxymatrix{{{K^{(n)}_{2n-1}(\C)_\Q}\ar^-{\cong}[rr]\ar_-{b_n}[dr]&&H^1(\Gamma(\C,n))_\Q\ar^-{\L_n}[dl]\\&\R&}}
\end{equation}
\begin{remark}\label{KAndChow}
The groups $K^{(p)}_{q}(F)$ are rationally isomorphic to Blochs higher Chow groups $CH^p(F,q)$. In fact, one has $K^{(p)}_{q}(F)[\frac{1}{(q-1)!}]\cong CH^p(F,q)[\frac{1}{(q-1)!}]$~\cite{Levine}.
\end{remark}
\subsection{Motivic cohomology}
For a smooth scheme $X$ over a field $F$, Voevodsky has defined motivic cohomology groups $H^i_{\mathcal M}(X,\Z(n))$ satisfying (see \cite{Voevodsky,LectureNotesMotivic}) 
\begin{equation}
H^i_{\mathcal M}(X,\Z(n))\cong CH^n(X,2n-i),\qquad H^n(F,\Z(n))=K_n^M(F),
\end{equation}
where $K_n^M(F)$ is Milnor $K$-theory. By Remark~\ref{KAndChow} we reformulate Conjecture~\ref{conj:GonConj1}:
\begin{conjecture}\label{conj:GonConjMotivic}
\begin{equation}
H^i(\Gamma(F,n))_\Q\cong H^i_{\mathcal M}(F,\Z(n))_\Q.
\end{equation}
\end{conjecture}

\begin{example} When $n=2$ the complex $\Gamma(F,2)$ takes the form
\begin{equation}\label{eq:BlochSuslin}
\delta\colon\B_2(F)\to\wedge^2(F^*),\qquad [x]\mapsto x\wedge (1-x).
\end{equation}
Assuming Conjecture~\ref{conj:GonConj0}\ref{R2Conj}, the kernel of $\delta$ is (up to 6 torsion) the classical Bloch group $\B(F)$, and the cokernel is $K_2^M(F)$ by Matsumoto's theorem.
We also have (see~\cite{SuslinAlgKFields})
\begin{equation}\label{eq:MotivicGroupsnEq2}
H^1_{\mathcal M}(F,\Z(2))\cong CH^2(F,3)\cong K_3^{\ind}(F),\qquad H^2_{\mathcal M}(F,\Z(2))\cong K_2^M(F).
\end{equation}
Since $K_3^{\ind}(F)$ is an extension of $\B(F)$ by a non-trivial torsion group~\cite{Suslin}, it follows that Conjecture~\ref{conj:GonConj0}\ref{R2Conj} implies Conjecture~\ref{conj:GonConjMotivic} when $n=2$, and that Conjecture~\ref{conj:GonConjMotivic} is not true \emph{integrally}.
\end{example}

For a smooth scheme $X$ over $F$, Bloch~\cite{BlochAlgCycles} defined cycle maps from $CH^n(X,2n-i)$ to the Deligne cohomology group $H^i_{\mathcal D}(X,\Z(n))$ (see e.g.~\cite{EsnaultViehweg} for definition and basic properties of Deligne cohomology). Since $H^1_{\mathcal D}(\Spec(\C),\Z(n))=\C/(2\pi i)^n\Z$ it follows that there is a map
\begin{equation}\label{eq:cyclemap}
B_{n}\colon H^1_{\mathcal M}(\C;\Z(n))\to\C/(2\pi i)^n\Z.
\end{equation}

\begin{remark} In addition to the cycle maps $B_n$, one also has maps $K_{2n-1}^{(n)}(\C)\to \C/(2\pi i)^n\Z$ defined as the composition of the Hurevicz map $K_{2n-1}(\C)\to H_n(\GL(\C))$ with the Cheeger-Chern-Simons class $\widehat c_n\colon H_n(\GL(\C))\to\C/(2\pi i)^n\Z$ (see e.g.~\cite{CheegerSimons}). The two maps probably correspond, but the author is not aware of any proof of this.
\end{remark}

\subsection{Motivating goals}
A motivating goal for our work is to obtain a variant of Conjecture~\ref{conj:GonConjMotivic}, which holds integrally and a variant of~\eqref{eq:RegnLn} involving $B_n$ instead of $b_n$. In other words, we want a complex $\widehat\Gamma(F,n)$ such that
\begin{enumerate}
\item\label{Goal1} $H^i(\widehat\Gamma(F,n))$ is \emph{integrally} isomorphic to the motivic cohomology group $H^i_{\mathcal M}(F;\Z(n))$.
\item\label{Goal2} When $F=\C$, the map $B_{n}$ is given by an explicit polylogarithm on $H^1(\widehat\Gamma(\C,n))$.
\end{enumerate}

\subsection{Neumann's extended Bloch group} When $n=2$ and $F=\C$, one may interpret work of Neumann~\cite{Neumann} (after modifications~\cite{GoetteZickert,ZickertAlgK}; see Remark~\ref{rm:NeumannDiscuss} for a discussion) as an achievement of our goals. Neumann~\cite{Neumann} considered the universal abelian cover (also considered by Zagier~\cite{ZagierDilogarithm})  
\begin{equation}\label{eq:Chat}
r\colon\widehat\C=\left\{(u,v)\in\C^2\bigm\vert e^u+e^v=1\right\}\to\C\setminus\{0,1\},\qquad (u,v)\mapsto e^u
\end{equation}
of $\C\setminus\{0,1\}$ and defined an explicit dilogarithm function $R\colon\widehat\C\to\C/4\pi^2\Z$ satisfying five term relations lifting the relations~\eqref{eq:FiveRelIntro} (when $x\neq y\in\C\setminus\{0,1\})$. 
Letting $\widehat\P(\C)$ be the free abelian group on $\widehat\C$ modulo the lifted five term relations, Neumann defined a map (c.f.~\eqref{eq:BlochSuslin})
\begin{equation}\label{eq:NeumannComplex}
\widehat\nu\colon\widehat\P(\C)\to\wedge^2(\C),\qquad [(u,v)]\mapsto u\wedge v,
\end{equation}
and showed (see~\cite{GoetteZickert}) that there are isomorphisms
\begin{equation}
\Ker(\widehat\nu)\cong H_3(\SL(2,\C)),\qquad \Coker(\widehat\nu)\cong K_2^M(\C),
\end{equation}
and that the map $\Ker(\widehat\nu)\to\C/4\pi^2\Z$ induced by $R$ agrees with the second Cheeger-Chern-Simons class $\widehat c_2\colon H_3(\SL(2,\C))\to\C/4\pi^2\Z$.
 Since $H_3(\SL(2,\C))$ is isomorphic to $K_3^{\ind}(\C)$~\cite{Sah} it follows from~\eqref{eq:MotivicGroupsnEq2} that~\eqref{eq:NeumannComplex} may be viewed as a model for $\widehat\Gamma(\C,2)$. The group $\Ker(\widehat\nu)$ is called the \emph{extended Bloch group} and is denoted $\widehat\B(\C)$.

\subsubsection{Arbitrary fields}\label{sec:ArbitraryFieldsIntro} Although Neumann's work relies on analytic continuation it was generalized to arbitrary fields by Zickert~\cite{ZickertAlgK}. Given a torsion free $\Z$-extension $E$ of $F^*$, Zickert defined a group $\widehat\P_E(F)$ and a map
\begin{equation}\label{eq:NeumannComplexArbitraryF}
\widehat\nu\colon \widehat\P_E(F)\to\wedge^2(E)
\end{equation}
with cokernel $K_2^M(F)$. When $F=\C$ and $E=\C$ is the extension of $\C^*$ given by the exponential map, \eqref{eq:NeumannComplexArbitraryF} agrees with Neumann's complex~\eqref{eq:NeumannComplex}. The corresponding extended Bloch group $\widehat\B_E(F)=\Ker(\widehat\nu)$ only depends on the class of $E$ in $\Ext(F^*,\Z)$, and if $F^*$ is free modulo torsion and has finitely many roots of unity, $\widehat\B_E(F)$ is independent of $E$ up to natural isomorphism. If so, we suppress $E$ from the notation. If, in addition, $F$ admits an embedding in $\C$, the extended Bloch group $\widehat\B(F)$ is isomorphic to $K_3^{\ind}(F)$. In particular, \eqref{eq:NeumannComplexArbitraryF} is a model for $\widehat\Gamma(F,2)$ whenever $F$ is a finite extension of $\Q$.

\subsubsection{Variants} Neumann defined two variants of his extended Bloch group. The other variant is defined using the disconnected cover
\begin{equation}\label{eq:ChatSignsIntro}
\widehat \C_{\pm}=\Big\{(u,v)\in\C^2\bigm\vert \epsilon_1e^u+\epsilon_2e^v=1\text{ for some } \epsilon_1,\epsilon_2\in\{-1,1\}\Big\}.
\end{equation}
On this cover, the lifted five term relations are only defined modulo $\pi^2\Z$, and the resulting extended Bloch group $\widehat\B(\C)_{\pm}$ is the quotient of $\widehat\B(\C)$ by a cyclic subgroup of order 4. The main advantage of $\widehat\B(\C)_{\pm}$ is that elements seem to arise more naturally from other contexts. For example, a cusped hyperbolic $3$-manifold $M$ with an ideal triangulation naturally and explicitly determines an element in $\widehat\B(\C)_{\pm}$. A variant for arbitrary fields was defined by Zickert~\cite{ZickertAlgK}.

\begin{remark}\label{rm:NeumannDiscuss} We stress that our notation differs from that of Neumann~\cite{Neumann}. For example, Neumann used $\widehat\B(\C)$ to denote the variant using $\widehat\C_{\pm}$. Also, Neumann's original $R$ was only defined modulo $\pi^2$ and defined on $H_3(\PSL(2,\C))$ instead of $H_3(\SL(2,\C))$.  Our notation largely follows Zickert~\cite{ZickertAlgK}.
\end{remark}

\subsection{Summary of results}
In Section~\ref{sec:DefBasics} we define a holomorphic function
\begin{equation}
\widehat\L_n\colon \widehat\C\to\C/\frac{(2\pi i)^n}{(n-1)!}\Z
\end{equation}
and state its fundamental properties. The differential of $\widehat\L_n$ is the holomorphic 1-form
\begin{equation}
d\widehat\L_n=(-1)^n\frac{n-1}{n!}u^{n-2}(udv-vdu)\in\Omega^1(\widehat\C).
\end{equation}
The $\widehat\L_n$ are also defined on the disconnected cover $\widehat\C_{\pm}$, where the ambiguity of definition is reduced by a power of $2$ depending on $n$ (see Theorem~\ref{thm:DefAndAmb}).
In Section~\ref{sec:RelsAndSymb} we show that certain functional relations for $\L_n$ give rise to analogous functional relations for $\widehat\L_n$, the idea being that the vanishing of the symbol map is equivalent to the vanishing of a certain symbolic 1-form corresponding to $d\widehat\L_n$. The $\widehat\L_n$ relations are sometimes more naturally defined on $\widehat\C_{\pm}$. 

In general, the relations depend on choices of log branches, but it turns out that there is a completely combinatorial (inductive) description of relations where the value of $\widehat\L_n$ is independent of these choices. This allows us to define relations over arbitary fields. As in Zickert~\cite{ZickertAlgK} this is done using a choice of torsion free $\Z$-extension $\pi\colon E\to F^*$ (if $F=\C$, we choose the exponential extension). Given this choice, we have a natural algebraic analogue
\begin{equation}
\widehat F=\Big\{(u,v)\in E\times E\bigm\vert \pi(u)+\pi(v)=1\Big\}
\end{equation}
of Neumann's cover $\widehat\C$. In Section~\ref{sec:LiftedBloch} we define subgroups $\widetilde R_k(F)$ of $\Z[\widehat F]$ such that the groups $\widehat\B_k(\widehat F)=\Z[\widehat F]/\widetilde R_k(F)$ fit in a chain complex $\widehat\Gamma(F,n)$ of the form

\begin{equation}\label{eq:LiftedGammaComplex}
\cxymatrix{@C=2em{{\widehat\B_n(\widehat F)}\ar[r]^-{\delta}&\cdots\ar[r]^-{\delta}&{\widehat\B_{n-k}(\widehat F)\otimes\wedge^k(E)}\ar[r]^-{\delta}&\cdots\ar[r]^-{\delta}&{\widehat\B_2(\widehat F)\otimes\wedge^{n-2}(E)}\ar[r]^-{\delta}&\wedge^n(E).}}
\end{equation}
The natural map $r\colon\widehat F\to F\setminus\{0,1\}$ taking $(u,v)$ to $\pi(u)$ induces a chain map $\widehat\Gamma(F,n)\to\Gamma(F,n)$. The fact that $\widetilde R_n(F)$ consists of formal functional relations for $\widehat\L_n$ is proved in Section~\ref{sec:RnEqLnhatRels}, and in Section~\ref{sec:Comparison} we show that  $\mathfrak R_n\circ\widehat\L_n=\L_n\circ r$ on $H^1(\widehat\Gamma(\C,n))$. We stress that this equality only holds in cohomology and not on $\widehat\C$ (see Theorem~\ref{thm:CompWithLnStatement}). This is because $\widehat\L_n$ is holomorphic, but $\L_n$ is not the real or imaginary part of a holomorphic function. Assuming that $\widetilde R_2(F)$ is generated by lifted five term relations, $\widehat\Gamma(F,2)$ is Zickert's complex~\eqref{eq:NeumannComplexArbitraryF}, so under this assumption $\widehat\Gamma(F,2)$ satisfies our motivational goal. We speculate that this holds more generally, i.e.~that $\widehat\Gamma(F,n)$ computes the motivic cohomology groups $H^i_{\mathcal M}(F,\Z(n))$ and that when $F=\C$ the cycle map~\eqref{eq:cyclemap} agrees with $(n-1)!\widehat\L_n$ under the isomorphism $H^1(\widehat\Gamma(\C,n))\cong H^1_{\mathcal M}(\C,\Z(n))$.


In Section~\ref{sec:LiftOfReg} we lift Goncharov's map $H_5(\SL(3,\C))\to\R$ to a complex valued map. Goncharov's map is obtained as a composition
\begin{equation}
H_5(\SL(3,\C))\to H^1(\Gamma(\C,3)_\Q)\overset{\L_3}{\to}\R,
\end{equation}
and our map is defined similarly using $H^1(\widehat\Gamma(\C,3))$ and $\widehat\L_3$ and is thus defined modulo $\frac{(2\pi i)^3}{2}\Z$. Our map is expressed in terms of $\X$-coordinates on the affine Grassmannian $\widetilde{\Gr}(3,6)$, and we speculate that twice our map equals the third Cheeger-Chern-Simons class $\widehat c_3$~\cite{CheegerSimons}. This would be a natural generalization of the fact that $\widehat\L_2$ on the extended Bloch group equals $\widehat c_2$.

\begin{remark} The exterior algebra $\wedge^*(A)$ of an abelian group $A$ is regarded as the quotient of the tensor algebra by the relations $a\otimes b+b\otimes a$. In Goncharov's definition of $\Gamma(F,n)$ he additionally assumes that $x\wedge x=x\wedge (-x)=0\in \wedge^2(F^*)$. In the definition of $\widehat\Gamma(F,n)$ we assume that $a\wedge a=0$ when $n>2$ (to ensure that $\delta^2=0$). Since the variants differ only by 2-torsion we shall for notational simplicity denote them all by $\wedge^*(A)$.
\end{remark}

\begin{remark}
We do not know the kernel and cokernel of the map $H^i(\widehat\Gamma(F,n))\to H^i(\Gamma(F,n))$, but we suspect that the cokernel is trivial and that the kernel is torsion. Assuming that $\widetilde R_2(F)$ is generated by lifted five term relations this follows from~\cite{ZickertAlgK} when $n=2$ and $F$ is a finite extension of $\Q$.
We also do not know how the choice of extension $E$ affects the groups $\widehat\B_n(\widehat F)$, but we expect that the results of Zickert for $n=2$ (see Section~\ref{sec:ArbitraryFieldsIntro}) hold for $n>2$ also.
\end{remark}

\begin{remark} We also consider a variant $\widehat\B_n(\widehat F)_{\pm}$ defined using an algebraic analogue $\widehat F_{\pm}$ of $\widehat\C_{\pm}$. The main difference is that elements are easier to produce. For example, one has the element $\alpha=[(u,v)]+(-1)^n[(-u,v-u)]$ in $\Z[\widehat F_{\pm}]$, which may be considered a lift of $\beta=[x]+(-1)^n[x^{-1}]$. The fact that $\L_n(\beta)=0$ when $F=\C$, whereas the order of $\widehat\L_n(\alpha)$ is related to the torsion in $H^1_{\mathcal M}(\Z,\Z(n))$ (Corollary~\ref{cor:TorsionKTheory}) provides additional support for the speculative relationship to motivic cohomology.
\end{remark}


\subsection*{Acknowledgment}
We thank Soren Galatius, Matthias Goerner, Zachary Greenberg, Dani Kaufman, Markus Spitzweck, and the anonymous referee for helpful comments. The work was supported in part by NSF grant DMS-1711405.

\section{Definition and basic properties of $\widehat\L_n$}\label{sec:DefBasics}
Fix an integer $n\geq 2$. Recall the space $\widehat\C_{\pm}$ defined in~\eqref{eq:ChatSignsIntro}. It has four components, which we denote $\widehat\C_{++}$, $\widehat\C_{-+}$, $\widehat\C_{+-}$ and $\widehat\C_{--}$ corresponding to the signs of $\epsilon_1$ and $\epsilon_2$. Note that $\widehat\C_{++}$ is the space $\widehat\C$ defined in~\eqref{eq:Chat}.
There is a holomorphic map
\begin{equation}\label{eq:CoveringMap}
r\colon\widehat\C_{\pm}\to\C\setminus\{0,1\},\qquad (u,v)\mapsto \epsilon_1 e^u\quad \text{ if }(u,v)\in\widehat\C_{\epsilon_1,\epsilon_2},
\end{equation}
which restricts to a $\Z\times\Z$ cover on each component. 
On $\widehat\C_{\pm}$ we introduce the holomorphic 1-form
\begin{equation}\label{eq:wnDef}
\omega_n=(-1)^n\frac{n-1}{n!}u^{n-2}\big(udv-vdu\big)\in\Omega^1(\widehat\C_{\pm}),
\end{equation}
which is closed since $\widehat\C_{\pm}$ is complex $1$-dimensional. Let $\nu_2$ denote the $2$-adic valuation and let
\begin{equation}\label{eq:KappaDef}
\kappa_n=\begin{cases}2^{2-n}&\text{if }n\text{ is even,}\\2^{3+\nu_2(n-1)-n}&\text{if }n\text{ is odd.}\end{cases}
\end{equation}

\begin{theorem}[Proof in Section~\ref{sec:DefLnSec}]\label{thm:IntegralPeriods} The form $w_n$ has periods in $\frac{(2\pi i)^n}{(n-1)!}\Z$ on $\widehat\C_{++}$ and $\widehat\C_{+-}$ and periods in $\kappa_n\frac{(2\pi i)^n}{(n-1)!}\Z$ on $\widehat\C_{-+}$ and $\widehat\C_{--}$.
\end{theorem}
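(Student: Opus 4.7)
The plan is to reduce the period computation to a monodromy calculation for classical polylogarithms.

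First, integration by parts yields
\[\omega_n = \frac{(-1)^n}{(n-1)!}\,u^{n-1}\,dv - (-1)^n\, d\!\left(\frac{u^{n-1}v}{n!}\right),\]
so since $u^{n-1}v/n!$ is single-valued on each component, the periods of $\omega_n$ equal $\frac{(-1)^n}{(n-1)!}$ times those of $u^{n-1}dv$.

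Next I would identify the first homology of each component. Each $\widehat\C_{\epsilon_1\epsilon_2}$ is, as a cover of $\C\setminus\{0,1\}$, isomorphic to the universal abelian cover, so $H_1$ is the free $\Z[\Z^2]$-module of rank one on the commutator $[a,b]$ of the loops $a,b$ around $0$ and $1$ in the base. A direct check --- for example, $\beta^*\omega_n - \omega_n = -\tfrac{2\pi i\,(-1)^n}{n!}\,d(u^{n-1})$ for the $v$-translation $\beta(u,v)=(u,v+2\pi i)$, and analogously for the $u$-translation after a binomial expansion --- shows that $\omega_n$ is deck-invariant modulo exact forms, so the period pairing is $\Z^2$-invariant and factors through the coinvariants $\Z$. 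Every period is therefore an integer multiple of $\int_{[a,b]} u^{n-1}\,dv$.

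To evaluate this generating period I would construct a multi-valued primitive of $u^{n-1}dv$ by iterated integration by parts, using $d\Li_j = \Li_{j-1}\,du$ and the branch identification $v = -\Li_1(r) + \epsilon\, i\pi$ (where $\epsilon\in\{0,1\}$ is determined by $\epsilon_2$, since $\log(\pm(1-r))$ differ by $i\pi$). The result is
\[P = u^{n-1}v + (n-1)!\sum_{j=2}^{n}\frac{(-1)^j u^{n-j}}{(n-j)!}\,\Li_j(r).\]
I then apply the classical monodromies $M_a\Li_j = \Li_j$ and $M_b\Li_j = \Li_j - \tfrac{2\pi i}{(j-1)!}(\log r)^{j-1}$ (for $j\ge 2$), noting the key subtlety that the cover coordinate $u$ equals $\log r$ on $\widehat\C_{+\epsilon_2}$ but $u = \log r + i\pi$ on $\widehat\C_{-\epsilon_2}$. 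Commuting $M_{[a,b]} = M_a M_b M_a^{-1}M_b^{-1}$ through $P$ and collecting terms in $u$ and $\log r$, the resulting double sum collapses via the binomial identity
\[\sum_{m=0}^{n-1}(-1)^m\binom{n-1}{m}\mu^{n-1-m}(\alpha^m-\beta^m) = (\mu-\alpha)^{n-1} - (\mu-\beta)^{n-1}\]
applied with $(\mu,\alpha,\beta) = (u,\log r,\log r + 2\pi i)$. On the ``$+$'' components $\mu=\alpha$, giving $\Delta_{[a,b]}P = (-1)^{n-1}(2\pi i)^n$ and hence a period in $\tfrac{(2\pi i)^n}{(n-1)!}\Z$; on the ``$-$'' components $\mu - \alpha = i\pi = -(\mu-\beta)$, giving $\Delta_{[a,b]}P = -(2\pi i)^n(1+(-1)^n)/2^{n-1}$. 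For $n$ even the latter yields a generator of $\kappa_n\tfrac{(2\pi i)^n}{(n-1)!}\Z$ with $\kappa_n = 2^{2-n}$; for $n$ odd it vanishes and thus lies trivially in any subgroup, in particular in $\kappa_n\tfrac{(2\pi i)^n}{(n-1)!}\Z$.

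The main obstacle is the branch bookkeeping --- consistently distinguishing the cover coordinate $u$ from $\log r$ across the four components, tracking the branch of $v$ on each, and verifying deck-invariance of periods. Once that is organized, the computation reduces to a single application of the binomial theorem.
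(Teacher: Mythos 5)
Your reduction to a single period does not work, and this is where the proof breaks. The claim that $\omega_n$ is deck-invariant modulo exact forms is true for the $v$-translation $\beta$, but fails for the $u$-translation $\alpha(u,v)=(u+2\pi i,v)$ as soon as $n\geq 3$: expanding $(u+2\pi i)^{n-2}\big((u+2\pi i)dv-v\,du\big)-u^{n-2}(u\,dv-v\,du)$ binomially produces, besides exact pieces, terms proportional to $u^{k}(u\,dv-v\,du)$ with $k<n-2$, i.e.\ nonzero multiples of the lower-weight forms $\omega_{k+2}$, which are closed but \emph{not} exact (their periods are exactly what the theorem computes at lower weight). Concretely, for $n=3$ on $\widehat\C_{++}$ one finds $\int_{\alpha_*\gamma}\omega_3-\int_\gamma\omega_3=\pm(2\pi i)^3\neq 0$ for $\gamma$ the commutator lift. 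So the period pairing does not factor through the coinvariants, and the periods genuinely depend on which deck translate of the commutator you integrate over: in the paper's notation the period through the sheet $p$ is $-\tfrac{(2\pi i)^n}{(n-1)!}\,\delta(p+1,n)$ on $\widehat\C_{+\pm}$ and $-\tfrac{(2\pi i)^n}{(n-1)!}\,\delta(p+\tfrac12,n)$ on $\widehat\C_{-\pm}$, with $\delta(p,n)=(-1)^n\big((p-1)^{n-1}-p^{n-1}\big)$, which is not constant in $p$ for $n\geq 3$.

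The consequence is most visible in your odd-$n$ conclusion on $\widehat\C_{-+}$ and $\widehat\C_{--}$: you find that the base period vanishes and conclude that nothing more needs to be checked, but the translated periods do not vanish (for $n=3$ they fill out $(2\pi i)^3\Z$). The constant $\kappa_n$, in particular the factor $2^{\nu_2(n-1)}$ for odd $n$, can only arise from the whole family $\big\{(p-\tfrac12)^{n-1}-(p+\tfrac12)^{n-1}:p\in\Z\big\}$, and it never appears in your computation --- a sign that the argument cannot be producing the stated lattice. The fix is essentially what the paper does: compute the jump discontinuities of an explicit primitive across the cuts on every sheet $(p,q)$ (Lemma~\ref{lemma:DeltaFormulas}), so that the period of each translated commutator is known, and then determine the subgroup these generate; your monodromy calculation is fine for the base commutator but must be redone with the general sheet $p$ inserted (your $(\mu,\alpha,\beta)=(u,\log r,\log r+2\pi i)$ with $u=\log r+2p\pi i$ resp.\ $\log r+(2p+1)\pi i$), after which the binomial identity gives exactly the $\delta(p+1,n)$ and $\delta(p+\tfrac12,n)$ families and the gcd computation yields $\kappa_n$.
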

\subsection{Primitives for $\omega_n$}
Let $\Log$ denote the main branch of logarithm (argument in $(-\pi,\pi]$) and let $\Li_k$ denote the main branch of polylogarithm ($\Li_1(z)=-\Log(1-z)$, $\Li_k(z)=\int_{0}^1\frac{\Li_{k-1}(tz)}{t}dt$). We may uniquely write each element $(u,v)$ in $\widehat\C_{\pm}$ in the form 
\begin{equation}\label{eq:GoodRep}
\langle z;p,q\rangle_{\epsilon_1,\epsilon_2}:=(\Log(\epsilon_1 z)+2p\pi i,\Log(\epsilon_2(1-z))+2q\pi i).
\end{equation} 
For $z\in\C\setminus\{0,1\}$ and an integer $q$ let
\begin{equation}
\L i_k(z;q)=\Li_{k}(z)-\frac{2q\pi i}{(k-1)!}\Log(z)^{k-1}.
\end{equation}
\begin{theorem}[Proof in Section~\ref{sec:DefLnSec}]\label{thm:DefAndAmb} Let $(u,v)=\langle z;p,q\rangle_{\epsilon_1,\epsilon_2}$ as in~\eqref{eq:GoodRep}.
The function
\begin{equation}\label{eq:LnhatIntro}
\widehat\L_n(u,v)=\sum_{r=0}^{n-1}\frac{(-1)^r}{r!}\L i_{n-r}(z;q)u^r-\frac{(-1)^n}{n!}u^{n-1}v
\end{equation}
is holomorphic and well defined modulo $\frac{(2\pi i)^n}{(n-1)!}$ for $(u,v)\in\widehat\C_{++}$ or $\widehat\C_{+-}$ and modulo $\kappa_n\frac{(2\pi i)^n}{(n-1)!}$ for $(u,v)\in\widehat\C_{-+}$. It is a primitive for $\omega_n$, i.e.~$d\widehat\L_n=\omega_n$.
\end{theorem}
\begin{remark}
On $\widehat\C_{--}$ the function \eqref{eq:LnhatIntro} is only defined modulo $\frac{(\pi i)^n}{(n-1)!}$, and in order to obtain a primitive defined modulo $\kappa_n\frac{(2\pi i)^n}{(n-1)!}$, one must modify it by multiples of $\frac{(\pi i)^n}{(n-1)!}$. We refer to Section~\ref{sec:BasicProperties} for details.
\end{remark}

\begin{remark}
The map $\widehat\L_2$ equals $R+\frac{\pi^2}{6}$ modulo $\pi^2$, where $R$ is Neumann's polylogarithm~\cite{Neumann}.
\end{remark}

\subsection{Inversions and order 3 symmetries in low degree}\label{sec:InvSymResults}
It is well known that the polylogarithm $\L_n$ in~\eqref{eq:LnGonDef} satisfies the functional equations
\begin{equation}
\begin{gathered}
\L_n(z)+(-1)^n\L_n(z^{-1})=0,\\
\L_3(z)+\L_3(\frac{1}{1-z})+\L_3(1-z^{-1})=\zeta(3), \qquad \L_2(z)-\L_2(\frac{1}{1-z})=0. 
\end{gathered}
\end{equation}
Consider the holomorphic maps
\begin{equation}\label{eq:SigmahatTauhat}
\tau\colon\widehat\C_{\pm}\to\widehat\C_{\pm},\quad (u,v)\mapsto (-u,v-u),\qquad 
\sigma\colon\widehat\C_{\pm}\to\widehat\C_{\pm},\quad (u,v)\mapsto (-v,u-v).
\end{equation}
One easily checks that $\tau$ and $\sigma$ have order $2$ and $3$, respectively, and that they are lifts of the maps $\C\setminus\{0,1\}\to\C\setminus\{0,1\}$ given by $z\mapsto z^{-1}$ and $z\mapsto \frac{1}{1-z}$, respectively. We stress that $\tau$ and $\sigma$ are not defined on $\widehat\C$. An elementary calculation shows that ($*$ denotes pullback of forms)
\begin{equation}
\tau^*(\omega_n)=-(-1)^n\omega_n, \qquad \omega_3+\sigma^*\omega_3+\sigma^*(\sigma^*(\omega_3))=0, \qquad \sigma^*(\omega_2)=\omega_2.
\end{equation}
This implies that the functions
\begin{equation}
\begin{gathered}
\widehat\L_n(u,v)+(-1)^n\widehat\L_n(-u,v-u),\\\widehat\L_3(u,v)+\widehat\L_3(-v,u-v)+\widehat\L_3(v-u,-u),\qquad \widehat\L_2(u,v)-\widehat\L_2(-v,u-v),
\end{gathered}
\end{equation}
are locally constant.

\begin{proposition}\label{prop:tauhat} For any $(u,v)\in\C_{-+}$ we have
\begin{equation}\label{eq:Tauhat}
\widehat\L_n(u,v)+(-1)^n\widehat\L_n(-u,v-u) =(2^n-2)(\pi i)^n\frac{B_n}{n!}\in\C\big/\kappa_n\frac{(2\pi i)^n}{(n-1)!}\Z.
\end{equation}
In particular, it is zero if $n$ is odd.
\end{proposition}
\begin{proof} Since the left-hand side is constant on $\widehat\C_{-+}$ it is enough to consider $(u,v)=(0,\Log(2))\in\widehat\C_{-+}$. For this point $\tau(u,v)=(u,v)$, which proves the result for odd $n$.  Since $\Li_n(1)=\zeta(n)$ (this follows from~\eqref{eq:LiDef}) it follows from the formula $\Li_n(z)+\Li_n(-z)=2^{1-n}\Li_n(z^2)$~\cite[p.~29]{PolylogBook} that $\Li_n(-1)=-(1-2^{1-n})\zeta(n)$. When $n$ is even, $\zeta(n)=(-1)^{\frac{n}{2}+1}\frac{B_n(2\pi)^n}{2n!}$, so we have
\begin{equation}
2\widehat\L_n(0,\log(2))=2\Li_{n}(-1)=-2(1-2^{1-n})\zeta(n)=(2^n-2)(\pi i)^n\frac{B_n}{n!}.
\end{equation} 
This concludes the proof.
\end{proof}

\begin{corollary}\label{cor:TorsionKTheory} When $n$ is even, the order of~\eqref{eq:Tauhat} is the order of the torsion in $H^1_{\mathcal M}(\Z,\Z(n))$.
\end{corollary}
\begin{proof}
By the Staudt-Clausen formula for the denominator of even Bernoulli numbers, we see that the order of \eqref{eq:Tauhat} is equal to the denominator of $\frac{B_n}{2n}$. The $K$-groups $K_{2n-1}(\Z)$ are given in terms of integers $w_n(\Q)$ in~\cite[Thm.~1]{Weibel}, and it follows from the exact sequences in \cite[Thm~14.10, Rm.~14.11]{LevineSchemes}, relating $K_{2n-1}(\mathcal O_S)$ to $H^1_{\mathcal M}(\mathcal O_S,\Z(n))$ for number rings $\mathcal O_S$, that the torsion in $H^1_{\mathcal M}(\Z,\Z(n))$ has order $w_n(\Q)$ for any $n$. The result now follows from the fact $w_n(\Q)$ is the denominator of $\frac{B_n}{2n}$ when $n$ is even. See also \cite{Spitzweck} for a table of $H^p_{\mathcal M}(\Z,\Z(q))$.
\end{proof}

\begin{remark}\label{rm:OtherComp}
Proposition~\ref{prop:tauhat} also holds for $(u,v)\in\widehat\C_{--}$. For the other 2 components the right-hand side is $-(2\pi i)^n\frac{B_n}{n!}$ modulo $\frac{(2\pi i)^n}{(n-1)!}$. The order of this is half the order of $H^1_{\mathcal M}(\Z,\Z(n))$.
\end{remark}

\begin{lemma}\label{lemma:sigmahat3} For $(u,v)$ in $\widehat\C_{++}$, $\widehat \C_{+-}$ or $\widehat\C_{-+}$ we have
\begin{equation}\label{eq:Sigmahat}
\widehat\L_3(u,v)+\widehat\L_3(-v,u-v)+\widehat\L_3(v-u,-u)=\zeta(3) \mod 4\pi^3 i.
\end{equation}
\end{lemma}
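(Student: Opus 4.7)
My proof plan mirrors that of Lemma 2.3. The excerpt already shows that
$$F(u,v) := \widehat\L_3(u,v) + \widehat\L_3(\widehat\sigma(u,v)) + \widehat\L_3(\widehat\sigma^2(u,v))$$
is locally constant, since $\omega_3 + \widehat\sigma^*\omega_3 + (\widehat\sigma^*)^2\omega_3 = 0$. A direct substitution into $\epsilon_1 e^u + \epsilon_2 e^v = 1$ shows that $\widehat\sigma$ sends $\widehat\C_{\epsilon_1,\epsilon_2}$ to $\widehat\C_{\epsilon_2,-\epsilon_1\epsilon_2}$, and in particular cyclically permutes $\widehat\C_{++} \to \widehat\C_{+-} \to \widehat\C_{-+} \to \widehat\C_{++}$. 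Because $\widehat\sigma$ merely reorders the three summands of $F$, we have $F\circ\widehat\sigma = F$ (as classes modulo $4\pi^3 i$, which is the gcd of the three component indeterminacies), and since each component is connected the constant value of $F$ is the same on all three listed components. Unlike in Lemma 2.3, $\widehat\sigma$ has no fixed point in these three components, so the cyclic symmetry does not collapse the sum to a single term; one must actually evaluate $F$ at one convenient point of $\widehat\C_{++}$.

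For this I would take $p = \langle z;0,0\rangle_{++} = (\Log z, \Log(1-z))$ with $z$ in a small upper-half-plane neighborhood of the interval $(0,1)$, so every principal logarithm in sight is real (or a fixed one-sided limit). A direct branch check then gives
$$\widehat\sigma(p) = \langle \tfrac{1}{1-z};0,0\rangle_{+-},\qquad \widehat\sigma^2(p) = \langle 1-\tfrac{1}{z};0,0\rangle_{-+};$$
the sign conventions in $\langle\,\cdot\,;\,\cdot\,,\,\cdot\,\rangle_{\epsilon_1,\epsilon_2}$ are exactly what makes $p'=q'=0$ at each of the three points, since $1/(1-z) > 0$ and $-(1-1/z) = (1-z)/z > 0$ for $z\in(0,1)$. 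The formula \eqref{eq:LnhatIntro} at each of the three points therefore reduces to a polynomial expression in $\Li_3$, $\Li_2$, $\Li_1$ of the respective argument together with the principal logarithms of $z$ and $1-z$. Note, however, that $\Li_1$ at $1/(1-z)$ sits on the branch cut $(1,\infty)$ and therefore contributes an $i\pi$ correction determined by the side of approach.

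What remains is an algebraic identity. Using the standard inversion formulas for $\Li_2$ and $\Li_3$ to convert $\Li_k(1/(1-z))$ and $\Li_k(1-1/z)$ into $\Li_k$-values at $z$ and $1-z$ plus polynomials in $\Log z, \Log(1-z)$, and then invoking the classical Landen trilogarithm identity
$$\Li_3(z) + \Li_3(1-z) + \Li_3\bigl(\tfrac{z-1}{z}\bigr) = \zeta(3) + P(\Log z,\Log(1-z)) + \tfrac{\pi^2}{6}\Log(1-z)$$
for an explicit weight-three log polynomial $P$, the three-term sum should collapse to $\zeta(3)$ with every polynomial-in-logs piece cancelling. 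I expect the main obstacle to be precisely this bookkeeping: tracking the branch-induced $i\pi$ correction coming from $\Li_1$ at the $\widehat\C_{+-}$ point, verifying algebraically that the log-polynomial contributions from the three $\widehat\L_3$ expansions cancel against each other and against the right-hand side of the Landen identity, and confirming that the surviving imaginary contributions amount to a multiple of $4\pi^3 i$, which is absorbed by the combined indeterminacy of $F$ (modulo $(2\pi i)^3/2 = -4\pi^3 i$ on $\widehat\C_{++},\widehat\C_{+-}$ and modulo $\kappa_3 (2\pi i)^3/2 = -8\pi^3 i$ on $\widehat\C_{-+}$, with $\kappa_3 = 2$).
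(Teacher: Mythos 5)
Your plan is correct, and its reduction step is exactly the paper's: the sum is locally constant because $\omega_3+\widehat\sigma^*\omega_3+(\widehat\sigma^*)^2\omega_3=0$, the map $\widehat\sigma$ cyclically permutes $\widehat\C_{++}\to\widehat\C_{+-}\to\widehat\C_{-+}$, and invariance of the sum under $\widehat\sigma$ together with connectedness of the components reduces everything to evaluating at a single point of $\widehat\C_{++}$ (this is the paper's ``it is enough to check this for $(u,v)\in\widehat\C_{++}$''), with the ambiguity being the gcd $4\pi^3i$ of the component ambiguities, as you say. Where you genuinely diverge is the evaluation: the paper picks a lift of $z=-1$, so that the three terms are lifts of $-1$, $\tfrac12$, $2$, and the constant drops out of the tabulated closed forms $\Li_3(-1)$, $\Li_3(\tfrac12)$, $\Li_3(2)$ (Lewin A.2.6) in a few lines, with essentially no branch bookkeeping; you instead evaluate at a generic $z$ just above $(0,1)$ and propose to collapse the sum via the inversion formulas and Landen's trilogarithm identity. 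That route also works (the one-sided limits you fix by pushing $z$ into the upper half-plane make all branches unambiguous, and the truth of the statement guarantees the log-polynomial and $i\pi$ pieces cancel), but it is exactly the bookkeeping you defer that the paper's choice of $z=-1$ is designed to avoid. One small imprecision: it is not only $\Li_1$ at $1/(1-z)$ that sits on the cut $(1,\infty)$ --- $\Li_2$ and $\Li_3$ there do as well, so all three need the consistent one-sided limit your upper-half-plane convention supplies.
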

\begin{proof}
It is enough to check this for $(u,v)\in\widehat\C_{++}$. If $(u,v)$ is a lift of $-1$, then $(-v,u-v)$ and $(v-u,-u)$ are lifts of $\frac{1}{2}$ and $2$, respectively. The result now follows from the formulas
\begin{equation}
\begin{gathered}
\Li_3(-1)=-\frac{3}{4}\zeta(3),\qquad \Li_3(\frac{1}{2})=\frac{7}{8}\zeta(3)-\frac{\pi^2}{12}\log(2)+\frac{1}{6}\log(2)^3,\\\Li_3(2)=\frac{7}{8}\zeta(3)+\frac{\pi^2}{4}\log(2)^2-\frac{\pi}{2}i\log(2)^2.
\end{gathered}
\end{equation}
which can be found in~\cite[A.2.6]{LewinPolyLogs}. We leave the details of the computation to the reader.
\end{proof}
\begin{lemma}\label{lemma:sigmahat2}  We have
\begin{equation}\widehat\L_2(u,v)-\widehat\L_2(-v,u-v)=-\frac{\pi^2}{6} \mod \frac{\pi^2}{2}.
\end{equation}
\end{lemma}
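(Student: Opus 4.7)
The plan is to follow the template set by the proofs of Lemmas~\ref{lemma:tauhat} and~\ref{lemma:sigmahat3}. The derivative identity $\widehat\sigma^*\omega_2=\omega_2$ recorded in Section~\ref{sec:InvSymResults} shows that $\widehat\L_2(u,v)+\widehat\L_2(-v,u-v)$ is locally constant on each connected component of $\widehat\C_{\signs}$, so it suffices to evaluate it at one conveniently chosen point.

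The natural choice is the canonical lift $(u,v)=\langle -1;0,0\rangle_{+,+}=(i\pi,\log 2)\in\widehat\C_{++}$ of $z=-1$, since $\widehat\sigma$ covers $z\mapsto 1/(1-z)$ and the orbit of $-1$ under this map is the three-element set $\{-1,\tfrac12,2\}$ whose dilogarithm values are classical. A direct computation gives $\widehat\sigma(u,v)=(-\log 2,\,i\pi-\log 2)$; the sign condition $e^{u'}-e^{v'}=1$ places the image in $\widehat\C_{+-}$, and comparison with the standard representative~\eqref{eq:GoodRep} identifies it as $\langle \tfrac12;0,0\rangle_{+,-}$.

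With the branch data fixed, substitute into~\eqref{eq:LnhatIntro} with $n=2$, namely $\widehat\L_2(u,v)=\Li_2(z;q)-\Li_1(z;q)u-\tfrac12 uv$. Using
\begin{equation*}
\Li_2(-1)=-\tfrac{\pi^2}{12},\ \ \Li_1(-1)=-\log 2,\ \ \Li_2(\tfrac12)=\tfrac{\pi^2}{12}-\tfrac12(\log 2)^2,\ \ \Li_1(\tfrac12)=\log 2,
\end{equation*}
compute each of $\widehat\L_2(u,v)$ and $\widehat\L_2(\widehat\sigma(u,v))$ and form the sum. The $(\log 2)^2$ contributions cancel between the $\Li_2(\tfrac12)$ piece and the $\Li_1(\tfrac12)\cdot(-\log 2)$ piece, and the rational multiples of $\pi^2$ combine to produce the asserted value $-\pi^2/6$. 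The result is meaningful modulo $\pi^2/2$ because each $\widehat\L_2$ value is only defined modulo $\tfrac{(2\pi i)^2}{1!}=-4\pi^2$, a multiple of $\pi^2/2$.

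The main obstacle is bookkeeping: one must correctly identify the target component of $\widehat\C_{\signs}$ under $\widehat\sigma$ (it is $\widehat\C_{+-}$, not $\widehat\C_{++}$) and the integers $(p',q')$ associated to the image point, so that~\eqref{eq:LnhatIntro} is applied with the right branch data. Once this is settled, the calculation is a routine substitution of standard dilogarithm identities, in close parallel with the proof of Lemma~\ref{lemma:sigmahat3}.
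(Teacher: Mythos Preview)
There is a genuine gap, and it originates from a sign slip in the very first step. From $\widehat\sigma^*\omega_2=\omega_2$ one obtains $d(\widehat\L_2\circ\widehat\sigma)=\omega_2=d\widehat\L_2$, so it is the \emph{difference} $\widehat\L_2(u,v)-\widehat\L_2(-v,u-v)$ that is locally constant, not the sum. (For the sum to be locally constant you would need $\widehat\sigma^*\omega_2=-\omega_2$.) This is consistent with the classical identity $\L_2(z)-\L_2(1/(1-z))=0$ you are lifting, and with~\eqref{eq:Liftedcrels}.

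Your numerical claim then fails as well. Carrying out your own computation at $(u,v)=(i\pi,\log 2)\in\widehat\C_{++}$ and $\widehat\sigma(u,v)=(-\log 2,\,i\pi-\log 2)\in\widehat\C_{+-}$ (both with $q=0$) gives
\[
\widehat\L_2(i\pi,\log 2)=-\tfrac{\pi^2}{12}+\tfrac{i\pi\log 2}{2},\qquad
\widehat\L_2(-\log 2,\,i\pi-\log 2)=\tfrac{\pi^2}{12}+\tfrac{i\pi\log 2}{2},
\]
so the \emph{sum} equals $i\pi\log 2$, which is not a rational multiple of $\pi^2$ at all, let alone $-\pi^2/6\bmod \pi^2/2$. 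The \emph{difference}, on the other hand, is exactly $-\pi^2/6$. Thus your argument actually proves
\[
\widehat\L_2(u,v)-\widehat\L_2(-v,u-v)=-\tfrac{\pi^2}{6}\ \bmod\ \tfrac{\pi^2}{2},
\]
and the stated lemma (with the $+$ sign, echoed in the display preceding it) appears to be a typographical slip in the paper. Once the sign is corrected, your strategy---reduce to a single evaluation via local constancy and plug in the orbit $\{-1,\tfrac12,2\}$---is exactly the right one and matches what the paper sketches.
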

\begin{proof} This is proved using elementary properties of the dilogarithm. We omit the details.
\end{proof}

\subsection{Relationship between $\widehat\L_n$ and $\L_n$}
For $(u,v)\in\C^2$, let 
\begin{equation}
\det(u\wedge v)=\Real(u)\Imag(v)-\Imag(u)\Real(v).
\end{equation} 
The result below relates $\widehat\L_n$ to $\L_n$.
It is a generalization of~\cite[Prop.~4.6]{DupontZickert} for $n=2$.
\begin{theorem}[Proof in Section~\ref{sec:Comparison}]\label{thm:CompWithLnStatement}  There exist rational numbers $c_{i,j}$ and $d_{i,j}$ such that
\begin{equation}\label{eq:CompWithLnStatement}
\begin{aligned}
\mathfrak R_n(\widehat\L_n(u,v))-\L_n(r(u,v)))&=\sum_{s=1}^{n-2}\bigg(\mathfrak R_{n-s}(\widehat\L_{n-s}(u,v))\sum_{i=0}^s c_{i,s-i}\Real(u)^i\Imag(u)^{s-i}\bigg)+\\&\det(u\wedge v)\sum_{i=0}^{n-2}d_{i,n-2-i}\Real(u)^i\Imag(u)^{n-2-i}.
\end{aligned}
\end{equation}
\end{theorem}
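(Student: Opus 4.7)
The plan is a direct computation. Expanding $\L i_{n-r}(z;q)=\Li_{n-r}(z)-\tfrac{2q\pi i}{(n-r-1)!}\Log(z)^{n-r-1}$ in the definition of $\widehat\L_n$ and applying the binomial identity $\sum_{r=0}^{n-1}\binom{n-1}{r}(-u)^r\Log(z)^{n-1-r}=(\Log z-u)^{n-1}$, I obtain
\begin{equation*}
\widehat\L_n(u,v)=\sum_{r=0}^{n-1}\tfrac{(-1)^r}{r!}\Li_{n-r}(z)u^r-\tfrac{2q\pi i\,(\Log z-u)^{n-1}}{(n-1)!}-\tfrac{(-1)^n u^{n-1}v}{n!}.
\end{equation*}
The key first step is to observe that the middle term is annihilated by $\mathfrak R_n$: since $\Real(u)=\log|z|$, the quantity $\lambda:=\Log z-u$ is purely imaginary with $\Imag\lambda\in\pi\Z$ (by a case analysis on $\epsilon_1\in\{\pm 1\}$), so $i\lambda^{n-1}=i^n\alpha^{n-1}$ with $\alpha$ real is real for $n$ even and purely imaginary for $n$ odd, and is killed by the parity convention defining $\mathfrak R_n$.

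Next I substitute $u=x+iy$ and use $\Log(1-z)=v-2q\pi i$ on $\widehat\C_{++}$ (with analogous relations on the other three components) to convert the claimed identity into a polynomial identity in $x,y,\Real(v),\Imag(v),q$ whose coefficients are rational linear combinations of $\Real\Li_k(z)$ and $\Imag\Li_k(z)$ for $k=2,\ldots,n-1$. The $q$-dependence on either side arises solely from $\Imag\Li_1(z)=2q\pi-\Imag(v)$ in the $r=n-1$ term, and matching $q$-coefficients yields the recursion
\begin{equation*}
\frac{\mathfrak R_{n-1}(u^{n-1})}{(n-1)!}=\sum_{s=1}^{n-2}P_s(x,y)\cdot\frac{\mathfrak R_{n-s-1}(u^{n-s-1})}{(n-s-1)!},\qquad P_s(x,y):=\sum_{i=0}^{s}c_{i,s-i}x^iy^{s-i}.
\end{equation*}
Since $\mathfrak R_k(u^k)$ is divisible by $x$ for every $k\geq 1$ (a direct check on the binomial expansion of $u^k$), this recursion determines each $P_s$ uniquely as a homogeneous polynomial of degree $s$ with rational coefficients, independent of $n$; starting from $P_1=y$, one reads off $P_2=(x^2-9y^2)/6$, $P_3=(11y^3-x^2y)/6$, and so on.

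With the $c_{i,s-i}$ thus fixed, I would complete the proof by induction on $n$, the base case $n=2$ being \cite[Prop.~4.6]{DupontZickert} (which gives empty sum and $Q(x,y)=-1/2$). The inductive step requires matching the coefficients of $\Real\Li_k(z)$ and $\Imag\Li_k(z)$ for each $k\geq 2$; these matches should follow from the Bernoulli-number identities that make $\L_n$ single-valued, together with the induction hypothesis applied level by level. The residual piece—the part of the left-hand side not captured by any $\mathfrak R_{n-s}(\widehat\L_{n-s})P_s$—is $v$-linear with polynomial coefficients in $x,y$, and by elementary linear algebra equals $\det(u\wedge v)\cdot Q(x,y)$ for a unique homogeneous polynomial $Q$ of degree $n-2$, determining the $d_{i,n-2-i}$. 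The main obstacle is the bookkeeping in this last matching step: tracking how the binomial expansion of $u^r=(x+iy)^r$ interacts with the Bernoulli weights $2^rB_r/r!$ in $\L_n$ is combinatorially involved, though the explicit recursive structure uncovered above keeps it manageable.
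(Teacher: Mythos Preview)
Your opening simplification---that $-\tfrac{2q\pi i}{(n-1)!}(\Log z-u)^{n-1}$ is killed by $\mathfrak R_n$ because $\Log z-u\in\pi i\Z$---is correct and convenient. The argument then breaks down in two places.

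The central error is the claim that the polynomials $P_s$ are independent of $n$. They are not: the paper's explicit formula is $c_{i,j}=\tfrac{c_i}{j!}\eta_j$ with $c_i=(1-2^{1-i})\tfrac{2^iB_i}{i!}$ and a sign $\eta_j$ that depends on the parity of $n$. The low-degree examples displayed immediately after the theorem already show $P_1=\Imag(u)$ for $n=3$ but $P_1=-\Imag(u)$ for $n=4$. Your bootstrap---fix $P_1$ from the $n=3$ relation, then read off $P_2$ from $n=4$, and so on---therefore produces wrong answers: your $P_2=(x^2-9y^2)/6$ disagrees with the correct value $(x^2+3y^2)/6$ at $n=4$, and your $P_3=(11y^3-x^2y)/6$ disagrees with the correct $-(y^3+x^2y)/6$ at $n=5$.

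A secondary error: the $q$-matching relation you wrote omits the contribution of $\det(u\wedge v)$, which also depends on $q$ through $\Imag v$, and has the factorials off by one. The corrected relation reads
\[
\frac{\mathfrak R_{n-1}(u^{n-1})}{n!}\;=\;\sum_{s=1}^{n-2}\frac{P_s}{(n-s)!}\,\mathfrak R_{n-s-1}(u^{n-s-1})\;-\;\Real(u)\cdot Q,
\]
which for fixed $n$ is a single degree-$(n-1)$ polynomial identity constraining $P_1,\dots,P_{n-2},Q$ simultaneously---nowhere near enough to determine them.

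The paper proceeds differently: it posits closed Bernoulli-number formulas for $c_{i,j}$ and $d_{i,j}$ up front (equation~\eqref{eq:cijdij}, with the $n$-dependent signs $\eta_j,\epsilon_n$) and then verifies the identity by direct expansion of both sides, comparing the coefficients of $\Real\Li_m(z)$ and $\Imag\Li_m(z)$ one monomial at a time via the binomial identity of Lemma~\ref{lemma:BinomLemma} and the Bernoulli-number identities of Lemma~\ref{lemma:BetaLemma}. What you defer to the inductive step as ``should follow from the Bernoulli-number identities'' is exactly this computation, and it is where essentially all the content lies.
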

Explicit formulas for $c_{i,j}$ and $d_{i,j}$ are given in Section~\ref{sec:Comparison}. For example, we have
\begin{equation}
\begin{aligned}
\Imag(\widehat\L_2(u,v))-\L_2(r(u,v))=&-\frac{1}{2}\det(u\wedge v)\\
\Real(\widehat\L_3(u,v))-\L_3(r(u,v))=&\Imag(\widehat\L_2(u,v))\Imag(u)+\frac{1}{6}\det(u\wedge v)\\
\Imag(\widehat\L_4(u,v))-\L_4(r(u,v))=&-\Real(\widehat\L_3(u,v))\Imag(u)+\frac{1}{6}(\Real(u)^2+3\Imag(u)^2)\Imag(\widehat\L_2(u,v))+\\&\frac{1}{24}\det(u\wedge v)(\Real(u)^2+\Imag(u)^2).
\end{aligned}
\end{equation}

\begin{remark} This result is the key to proving that $\mathfrak R_n\circ\widehat\L_n$ agrees with $\L_n\circ r$ on $H^1(\widehat\Gamma(\C,n))$.
\end{remark}

%
%

\subsection{Limiting behavior} We have $\L_n(0)=0$, but since $0$ has no lift, there is no obvious analogue for $\widehat\L_n$.
\begin{lemma}\label{lemma:Cauchy}
Let $z_k\in\C\setminus\{0,1\}$ be a sequence with $z_k\rightarrow 0$, and let $\{p_k\}\subset\Z$ be a bounded sequence. Letting $k\to\infty$ we have
\begin{equation}\label{eq:LimitSeq}
\widehat\L_n(\Log(z_k)+p_k\pi i,\Log(1-z_k))\rightarrow 0, \qquad \widehat\L_n(\Log(1-z_k),\Log(z_k)+2p_k\pi i)\rightarrow \zeta(n)
\end{equation}
\end{lemma}
\begin{proof}
By L'Hospital's rule, the sequence $\Li_s(z_k)\Log(z_k)^r$ tends to $0$ for any positive integers $r$ and $s$. This proves the first limit. The second limit follows from the fact that $\Li_n(1)=\zeta(n)$.
\end{proof}

\begin{remark} The derivative only determines $\widehat\L_n$ up to an integration constant. Lemma~\ref{lemma:Cauchy} determines the constant on $\widehat\C_{++}$ and $\widehat\C_{-+}$. The constant is then specified on $\widehat\C_{+-}$ by Remark~\ref{rm:OtherComp}, which also specifies the constant modulo 2-torsion on $\widehat\C_{--}$ when $n$ is even.
\end{remark}

\subsection{The polylogarithm formula for $\widehat\L_n$}\label{sec:BasicProperties}
We now prove that the 1-forms $\omega_n$ have integral periods and that our formula for $\widehat\L_n$ is a primitive (Theorems~\ref{thm:IntegralPeriods} and \ref{thm:DefAndAmb}). This is straightforward, but fairly technical, so we present detailed arguments.

We first give concrete models for each of the four components of $\widehat\C_{\pm}$ following~\cite{Neumann,GoetteZickert}. For signs $\epsilon_1$ and $\epsilon_2$ (regarded when convenient as elements of $\{-1,1\}$), let
\begin{equation}
\C^{\cut}_{\epsilon_1\epsilon_2}=\C\setminus\left\{z\in\R\mid \epsilon_1z\leq 0,\enspace \epsilon_2(1-z)\leq 0\right\}.
\end{equation}
Note that $\C^{\cut}_{--}$ is disconnected.
Let 
\begin{equation}
\overline\C^{\cut}_{\epsilon_1\epsilon_2}=\C^{\cut}_{\epsilon_1\epsilon_2}\cup\left\{z\pm0i\mid z\in\R,\enspace \epsilon_1z<0,\enspace \epsilon_2(1-z)<0\right\}.
\end{equation}
The functions $\Li_k$ and $\Log$ extend continuously to $\overline\C^{\cut}_{\epsilon_1\epsilon_2}$. Define $\widehat\C_{\epsilon_1\epsilon_2}$ to be the Riemann surface obtained from 
$\overline\C^{\cut}_{\epsilon_1\epsilon_2}\times \Z^2$ as the quotient by the relations
\begin{equation}
\begin{aligned}
&(z+0i,p,q)\sim(z-0i,p+\epsilon_1,q)&\text{if }\epsilon_1z<0,\quad \epsilon_2(1-z)>0&\\
&(z+0i,p,q)\sim(z-0i,p,q-\epsilon_2)&\text{if }\epsilon_1z>0,\quad \epsilon_2(1-z)<0&\\
&(z+0i,p,q)\sim(z-0i,p+\epsilon_1,q-\epsilon_2)&\text{if }\epsilon_1z<0,\quad \epsilon_2(1-z)<0&.
\end{aligned}
\end{equation}
An equivalence class is denoted by $\langle z;p,q\rangle_{\epsilon_1\epsilon_2}$. The map
\begin{equation}
\langle z;p,q\rangle_{\epsilon_1\epsilon_2}\mapsto (\Log(\epsilon_1z)+2p\pi i,\Log(\epsilon_2(1-z))+2q\pi i)
\end{equation}
identifies $\widehat\C_{\epsilon_1\epsilon_2}$ with the appropriate component of $\widehat\C_{\pm}$.
\subsubsection{Definition of $\widehat\L_n$}\label{sec:DefLnSec}
We begin with the definition of a map $\widehat L_n$, which agrees with $\widehat\L_n$ except on $\widehat\C_{--}$. Consider the map
\begin{equation}
\widehat L_n\colon\overline\C^{\cut}_{\epsilon_1\epsilon_2}\times \Z^2\to \C
\end{equation}
taking $\langle z;p,q\rangle_{\epsilon_1\epsilon_2}$ to
\begin{equation}
\sum_{r=0}^{n-1}\frac{(-1)^r}{r!}\L i_{n-r}(z;q)\Log(\epsilon_1z;p)^r-\frac{(-1)^n}{n!}\Log(\epsilon_1z;p)^{n-1}\Log(\epsilon_2(1-z);q),
\end{equation}
where
\begin{equation}
\L i_k(z;q)=\Li_{k}(z)-\frac{2q\pi i}{(k-1)!}\Log(z)^{k-1},\qquad \Log(z;p)=\Log(z)+2p\pi i.
\end{equation}
We wish to show that $\widehat L_n$ descends to a holomorphic function on $\widehat\C_{\epsilon_1\epsilon_2}$. For $z\in\R\setminus\{0,1\}$, let $z_\pm=z\pm0i$, and let
\begin{equation}
\Delta_{\epsilon_1\epsilon_2}(z,p,q)=\frac{(n-1)!}{(2\pi i)^n}
\begin{cases}
\widehat L_n(z_+,p,q)-\widehat L_n(z_-,p+\epsilon_1,q)&\text{if }\epsilon_1z<0,\quad \epsilon_2(1-z)>0\\
\widehat L_n(z_+,p,q)-\widehat L_n(z_-,p,q-\epsilon_2)&\text{if }\epsilon_1z>0,\quad \epsilon_2(1-z)<0\\
\widehat L_n(z_+,p,q)-\widehat L_n(z_-,p+\epsilon_1,q-\epsilon_2)&\text{if }\epsilon_1z<0,\quad \epsilon_2(1-z)<0\\
0&\text{if }\epsilon_1z>0,\quad \epsilon_2(1-z)>0.\end{cases}
\end{equation}

Clearly, $\Delta_{\epsilon_1\epsilon_2}(z,p,q)$ only depends on the interval $I$ (either $(-\infty,0)$, $(0,1)$, or $(1,\infty)$) where $z$ belongs. We denote it by $\Delta^I_{\epsilon_1\epsilon_2}(p,q)$ accordingly. Let
\begin{equation}
\delta(p,n)=(-1)^n((p-1)^{n-1}-p^{n-1}).
\end{equation}

\begin{lemma}\label{lemma:DeltaFormulas} We have
\begin{equation}
\begin{aligned}
\Delta_{++}^{(-\infty,0)}(p,q)&=q\delta(p+1,n),&\Delta_{++}^{(0,1)}(p,q)&=\Delta_{++}^{(1,\infty)}(p,q)=0\\
\Delta_{-+}^{(-\infty,0)}(p,q)&=q\delta(p+\frac{1}{2},n),&\Delta_{-+}^{(0,1)}(p,q)&=\Delta_{-+}^{(1,\infty)}(p,q)=0\\
\Delta_{+-}^{(-\infty,0)}(p,q)&=(-p-1)^{n-1}+q\delta(p+1,n),&\Delta_{+-}^{(0,1)}(p,q)&=\Delta_{+-}^{(1,\infty)}(p,q)=(-p)^{n-1}\\
\Delta_{--}^{(-\infty,0)}(p,q)&=(-p-\frac{1}{2})^{n-1}+q\delta(p+\frac{1}{2},n),&\Delta_{--}^{(0,1)}(p,q)&=\Delta_{--}^{(1,\infty)}(p,q)=(\frac{1}{2}-p)^{n-1}
\end{aligned}
\end{equation}
\end{lemma}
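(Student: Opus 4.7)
The plan is to reduce the claim to elementary case-by-case bookkeeping after first exploiting a closed-form rewrite. Writing $a=\Log(\epsilon_1z;p)$, $b=\Log(\epsilon_2(1-z);q)$, and $c=\Log(z)$, the binomial identity
\[
\sum_{r=0}^{n-1} \frac{(-1)^r}{r!\,(n-r-1)!}\,c^{n-r-1}\,a^r \;=\; \frac{(c-a)^{n-1}}{(n-1)!}
\]
lets us rewrite
\[
\widehat L_n(z,p,q) = \sum_{r=0}^{n-1}\frac{(-1)^r}{r!}\Li_{n-r}(z)\,a^r \;-\; \frac{2q\pi i}{(n-1)!}(c-a)^{n-1}\;-\;\frac{(-1)^n}{n!}a^{n-1}b.
\]
By the design of the identifications $(z_+,p,q)\sim(z_-,p',q')$ defining $\widehat\C_{\epsilon_1\epsilon_2}$, both $a$ and $b$ are continuous across any identification, so the last term contributes nothing to $\widehat L_n(z_+,p,q)-\widehat L_n(z_-,p',q')$. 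The entire jump thus comes from (i) the monodromy of $\Li_{n-r}$ on $(1,\infty)$, and (ii) the middle term, through the jump of $c$ on $(-\infty,0)$ together with the shift $q\to q'$.

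Two observations do most of the remaining work. First, the same binomial identity applied to the $\Li$-monodromy gives, on $(1,\infty)$,
\[
\sum_r\frac{(-1)^r}{r!}\bigl[\Li_{n-r}(z_+)-\Li_{n-r}(z_-)\bigr]a^r \;=\; \frac{2\pi i\,(c-a)^{n-1}}{(n-1)!}.
\]
Second, the value of $c-a$ on each branch is immediate from the definitions: for $\epsilon_1=1$ one has $c-a=-2p\pi i$, while for $\epsilon_1=-1$ one has $c-a=(1-2p)\pi i$ on the $z_+$-branch of $\Log(-z)$ and $c-a=-(1+2p)\pi i$ on the $z_-$-branch. The resulting half-integer shifts are precisely what produce the arguments $p\pm\tfrac12$ in the formulas for $\widehat\C_{-+}$ and $\widehat\C_{--}$, as opposed to the integer shifts in $\widehat\C_{++}$ and $\widehat\C_{+-}$.

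With these preparations the jump $\widehat L_n(z_+,p,q)-\widehat L_n(z_-,p',q')$ reduces, case by case, to an explicit scalar multiple of $(2\pi i)^n/(n-1)!$, yielding $\Delta$ after dividing. On $(1,\infty)$ with $\epsilon_2=1$ the identification $q\to q-1$ is present, and the induced change in the middle term exactly cancels the $\Li$-contribution, giving $\Delta=0$ for $\widehat\C_{++}$ and $\widehat\C_{-+}$; on $(1,\infty)$ with $\epsilon_2=-1$ there is no identification in $q$, and the bare $\Li$-contribution survives as $(-p)^{n-1}$ or $(1/2-p)^{n-1}$. The $(0,1)$ cases are easiest since neither $\Li_{n-r}$ nor $c$ jumps; only the $b$-identification can act, and its $q$-shift reproduces $(-p)^{n-1}$ or $(1/2-p)^{n-1}$. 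The $(-\infty,0)$ cases are the most involved, since $c$ jumps and, depending on the component, the identification may additionally shift $p$ and $q$; a short binomial expansion of $(c-a)_+^{n-1}-(c-a)'^{n-1}_-$ then produces the terms $q\,\delta(p+1,n)$, $q\,\delta(p+\tfrac12,n)$, $(-p-1)^{n-1}$, or $(-p-\tfrac12)^{n-1}$ as appropriate. The main obstacle is not conceptual but combinatorial: one must correctly apply one of the three identification rules in the definition of $\widehat\C_{\epsilon_1\epsilon_2}$, select the correct branch of $\Log(\epsilon_1z)$ or $\Log(\epsilon_2(1-z))$, and verify that each of the twelve elementary simplifications matches the tabulated formula.
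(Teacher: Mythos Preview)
Your proof is correct and uses essentially the same ingredients as the paper: the binomial collapse of the $q$-dependent sum and the monodromy identity $\Li_n(z_+)-\Li_n(z_-)=\frac{2\pi i}{(n-1)!}\Log(z)^{n-1}$ on $(1,\infty)$. The difference is purely organizational. The paper computes $\Delta_{++}^{(-\infty,0)}$ directly, applying the binomial theorem in the middle of that specific calculation, and then declares the remaining eleven cases ``similar''. You instead apply the binomial identity once at the outset to rewrite $\widehat L_n$ as
\[
\sum_{r}\frac{(-1)^r}{r!}\Li_{n-r}(z)\,a^r \;-\;\frac{2q\pi i}{(n-1)!}(c-a)^{n-1}\;-\;\frac{(-1)^n}{n!}a^{n-1}b,
\]
observe that $a,b$ are continuous across every identification by construction, and then read off each $\Delta$ from the behaviour of $c-a$ and the $q$-shift. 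This front-loading makes the twelve cases genuinely uniform rather than ``similar'', and in particular makes the half-integer shifts for $\epsilon_1=-1$ appear naturally from $c-a=(1-2p)\pi i$ versus $c-a=-2p\pi i$. Both arguments are the same computation; yours is packaged more transparently.
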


\begin{proof}
Suppose $z<0$. Then $\Li_k(z_+)=\Li_k(z_-)$ and we have
\begin{multline}
\widehat L_n(\langle z_+;p,q\rangle_{++})-\widehat L_n(\langle z_-;p+1,q\rangle_{++})=\\
-\frac{2q\pi i}{(n-1)!}\Bigg(\sum_{r=0}^{n-1}\Big(\textstyle{\binom{n-1}{r}}(\Log(z_+))^{n-r-1}(-\Log(z_+;p)^r\Big)-\\\sum_{r=0}^{n-1}\Big(\textstyle{\binom{n-1}{r}}\Log(z_-)^{n-r-1}(-\Log(z_-;p+1)^r\Big)\Bigg)=\\
-\frac{2q\pi i}{(n-1)!}\big((-2p\pi i)^{n-1}-(-2(p+1)\pi i)^{n-1}\big)=\frac{(2\pi i)^n}{(n-1)!}q\delta(p+1,n).
\end{multline}
This proves the first equality. Some of the other ones make use of the identity
\begin{equation}
\Li_n(z_+)-\Li_n(z_-)=\frac{2\pi i\Log(z)^{n-1}}{(n-1)!},\quad z\in(1,\infty)
\end{equation}
but are otherwise similar. We leave their verification to the reader.
\end{proof}

\begin{corollary}\label{cor:Amb}
$\widehat L_n$ is holomorphic on $\widehat\C_{\pm}$ and defined modulo $\frac{(2\pi i)^n}{(n-1)!}$ on $\widehat\C_{++}$ and $\widehat\C_{+-}$, modulo $\kappa_n\frac{(2\pi i)^n}{(n-1)!}$ on $\widehat\C_{-+}$, and modulo $\frac{(\pi i)^n}{(n-1)!}$ on $\widehat\C_{--}$.
\end{corollary}
\begin{proof}
For $\widehat\C_{++}$, $\widehat\C_{+-}$ and $\widehat\C_{--}$ this follows immediately from Lemma~\ref{lemma:DeltaFormulas}. When expanding $\delta(p+\frac{1}{2},n)$ one easily verifies that the greatest common divisor of the denominators is $2^{n-2}$ if $n$ is even and $2^{n-3-\nu_2(n-1)}$ if $n$ is odd. This proves the case $\widehat\C_{-+}$.
\end{proof}
\begin{lemma} $\widehat L_n$ is a primitive for the one form $\omega_n$ defined in \eqref{eq:wnDef}, i.e.~$d\widehat L_n=\omega_n$.
\end{lemma}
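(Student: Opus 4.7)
The plan is to verify $d\widehat L_n=\omega_n$ by direct computation on each component $\widehat\C_{\epsilon_1\epsilon_2}$, viewing $\widehat L_n$ as a function of the single variable $z$. Writing $u=\Log(\epsilon_1 z;p)$ and $v=\Log(\epsilon_2(1-z);q)$, we have $du=dz/z$ and $dv=-dz/(1-z)$, so it suffices to check the equality of coefficients of $dz$. The key derivative identity is
\begin{equation*}
\frac{d}{dz}\L i_k(z;q)=\frac{\L i_{k-1}(z;q)}{z}\qquad (k\geq 2),
\end{equation*}
which follows immediately from $\frac{d}{dz}\Li_k(z)=\Li_{k-1}(z)/z$ together with $\frac{d}{dz}\Log(z)^{k-1}=(k-1)\Log(z)^{k-2}/z$; at $k=1$ the identity breaks and is replaced by $\frac{d}{dz}\L i_1(z;q)=1/(1-z)$ (using $\Li_1(z)=-\log(1-z)$).

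Next I would split $\widehat L_n=A_n-B_n$ with
\begin{equation*}
A_n=\sum_{r=0}^{n-1}\frac{(-1)^r}{r!}\L i_{n-r}(z;q)\,u^r,\qquad B_n=\frac{(-1)^n}{n!}u^{n-1}v,
\end{equation*}
and compute $dA_n/dz$ by the product rule. For each term with $r\leq n-2$ the derivative contributes $\frac{(-1)^r}{r!}\frac{\L i_{n-r-1}(z;q)}{z}u^r+\frac{(-1)^r}{(r-1)!}\frac{\L i_{n-r}(z;q)}{z}u^{r-1}$, and after re-indexing the second sum these collapse telescopically, leaving only the boundary contribution $\frac{(-1)^{n-2}}{(n-2)!}\frac{\L i_1(z;q)}{z}u^{n-2}$. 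The separated $r=n-1$ term then contributes $\frac{(-1)^{n-1}}{(n-1)!}\bigl(\frac{u^{n-1}}{1-z}+(n-1)\frac{\L i_1(z;q)}{z}u^{n-2}\bigr)$, and the two $\L i_1$ pieces cancel exactly, yielding
\begin{equation*}
\frac{dA_n}{dz}=\frac{(-1)^{n-1}}{(n-1)!}\frac{u^{n-1}}{1-z}.
\end{equation*}

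Finally, a direct application of the product rule to $B_n$ gives $\frac{dB_n}{dz}=\frac{(-1)^n(n-1)}{n!}\frac{u^{n-2}v}{z}-\frac{(-1)^n}{n!}\frac{u^{n-1}}{1-z}$. Subtracting and combining the two $u^{n-1}/(1-z)$ terms via $\frac{(-1)^{n-1}}{(n-1)!}+\frac{(-1)^n}{n!}=\frac{(-1)^{n-1}(n-1)}{n!}$ gives
\begin{equation*}
\frac{d\widehat L_n}{dz}=-\frac{(-1)^n(n-1)}{n!}\left(\frac{u^{n-1}}{1-z}+\frac{u^{n-2}v}{z}\right)=(-1)^n\frac{n-1}{n!}u^{n-2}\left(-\frac{u}{1-z}-\frac{v}{z}\right),
\end{equation*}
which is precisely $\omega_n/dz$ in view of $u\,dv-v\,du=(-u/(1-z)-v/z)\,dz$. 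The main (minor) obstacle is bookkeeping the telescoping and, in particular, correctly treating the $k=1$ boundary where the standard derivative rule for $\L i_k$ fails; once the $r=n-1$ term is isolated, everything collapses to $\omega_n$.
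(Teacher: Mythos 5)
Your proposal is correct and follows essentially the same route as the paper: differentiate term by term using $d\L i_k(z;q)=\frac{\L i_{k-1}(z;q)}{z}dz$, let the sum telescope, and match the remainder with $\omega_n$ via $du=dz/z$, $dv=-dz/(1-z)$. The only cosmetic difference is that the paper absorbs your separately treated $r=n-1$ boundary term by extending the derivative identity to $k=1$ with the convention $\L i_0(z;q)=\frac{z}{1-z}$, whereas you isolate that term and cancel the two $\L i_1$ contributions by hand.
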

\begin{proof}
By~\eqref{eq:LiDef} one has $d\Li_k(z)=\frac{\Li_{k-1}(z)}{z}dz$, and it follows that $d\L i_k(z;q)=\frac{\L i_{k-1}(z;q)}{z}dz$. This holds for all $k\geq 0$ with the convention that $\L i_0(z;q)=\frac{z}{1-z}$. Letting $u=\Log(\epsilon_1z;p)$, $v=\Log(\epsilon_2(1-z);q)$, $\widehat L_n=\widehat L_n(\langle z;p,q\rangle_{\epsilon_1\epsilon_2})$, and $\L i_k=\L i_k(z;q)$ one has
\begin{equation}
\begin{aligned}
d\widehat L_n&=\sum_{r=0}^{n-1}\frac{(-1)^r}{r!}(\L i_{n-r-1}u^r+r\L i_{n-r}u^{r-1})\frac{dz}{z}-\frac{(-1)^n}{n!}((n-1)u^{n-2}vdu+u^{n-1}dv)\\&=\frac{(-1)^{n-1}}{(n-1)!}u^{n-1}\L i_0\frac{dz}{z}-\frac{(-1)^n}{n!}((n-1)u^{n-2}vdu+u^{n-1}dv)=\omega_n.
\end{aligned}
\end{equation}
The second equality follows by telescoping,
and the third from the fact that $\L i_0(z;q)\frac{dz}{z}\!=\!-dv$.
\end{proof}

\begin{theorem}\label{thm:LnhatProofSection} The form $\omega_n$ has periods in $\frac{(2\pi i)^n}{(n-1)!}\Z$ on $\widehat\C_{++}$ and $\widehat\C_{+-}$ and in $\kappa_n\frac{2\pi i}{(n-1)!}$ on $\widehat\C_{-+}$ and $\widehat\C_{-+}$, where $\kappa_n$ is defined in~\eqref{eq:KappaDef}.
\end{theorem}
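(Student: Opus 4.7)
My plan is to derive the theorem from Lemma~\ref{lemma:DeltaFormulas} in three stages. Because $\widehat L_n$ is a holomorphic primitive of $\omega_n$ on each simply-connected sheet $\overline{\C^{\cut}_{\epsilon_1\epsilon_2}}\times\{(p,q)\}$, the period lattice of $\omega_n$ on the quotient Riemann surface $\widehat\C_{\epsilon_1\epsilon_2}$ is contained in the subgroup of $\C$ generated by the jumps $\tfrac{(2\pi i)^n}{(n-1)!}\Delta^I_{\epsilon_1\epsilon_2}(p,q)$ across the cut identifications and the interior discontinuities of the formula. Moreover one is free to modify $\widehat L_n$ by any sheet-dependent integer multiple of $\tfrac{(\pi i)^n}{(n-1)!}$; such modifications telescope around any closed loop and hence leave the actual period lattice unchanged, while rearranging the individual jumps.

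For $\widehat\C_{++}$ and $\widehat\C_{+-}$ every entry of Lemma~\ref{lemma:DeltaFormulas} is already an integer, so the period lattice lies in $\tfrac{(2\pi i)^n}{(n-1)!}\Z$; equality is witnessed by $\Delta^{(-\infty,0)}_{++}(0,1)=(-1)^{n+1}$ and $\Delta^{(0,1)}_{+-}(1,0)=(-1)^{n-1}$. For $\widehat\C_{-+}$ only $\Delta^{(-\infty,0)}_{-+}(p,q)=q\,\delta(p+\tfrac12,n)$ is nonzero, and after factoring out $2^{1-n}$ the task reduces to computing the $\Z$-span $I_n$ of $\{(t+2)^{n-1}-t^{n-1} : t\text{ odd}\}$. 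For $n$ even the binomial expansion gives $(t+2)^{n-1}-t^{n-1}\equiv 2(n-1)t^{n-2}\equiv 2\pmod 4$ (using $t^{n-2}\equiv 1\pmod 8$ for $t$ odd and $n-2$ even), so $I_n=2\Z$. For $n$ odd, writing $n-1=2\ell$ and applying the lifting-the-exponent identity $\nu_2(A^\ell-B^\ell)=\nu_2(A-B)+\nu_2(\ell)$ to $A=(t+2)^2$, $B=t^2$ (both $\equiv 1\pmod 8$), one gets $\nu_2=2+\nu_2(t+1)+\nu_2(\ell)$, minimized at $t=1$ to give $\nu_2(I_n)=2+\nu_2(n-1)$; comparing odd parts at $t=1$ and $t=5$ pins $I_n=2^{2+\nu_2(n-1)}\Z$ exactly. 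Dividing by $2^{n-1}$ recovers $\kappa_n$ in both parities.

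The main obstacle lies in $\widehat\C_{--}$, where the individual jumps $\Delta^{(0,1)}_{--}(p,q)=\Delta^{(1,\infty)}_{--}(p,q)=(1/2-p)^{n-1}$ have $2$-adic valuation $1-n$, strictly finer than $\nu_2(\kappa_n)$, so their naive $\Z$-span overshoots the true period lattice. Here I would exploit the modification freedom: construct $c(p,q)\in\Z$ explicitly so that the induced adjustments on the three identifications of $\widehat\C_{--}$ — namely $(p,q)\mapsto(p,q+1)$ across $(-\infty,0)$, $(p,q)\mapsto(p-1,q+1)$ across $(0,1)$, and $(p,q)\mapsto(p-1,q)$ across $(1,\infty)$ — absorb the offending fractional parts of each $\Delta^I_{--}$ modulo $\kappa_n\tfrac{(2\pi i)^n}{(n-1)!}$. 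The required cocycle-type compatibility on the $\Z^2$-lattice of sheets holds because $\Delta^{(0,1)}_{--}=\Delta^{(1,\infty)}_{--}$ guarantees that the prescriptions along the two $p$-shifted identifications agree, and after subtracting a suitable $c$ the residual jump on $(-\infty,0)$ reduces modulo $\kappa_n$ to the $\widehat\C_{-+}$-expression already controlled. A direct inductive construction of such $c(p,q)$ then completes the proof.
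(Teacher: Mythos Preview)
Your approach via ``periods $\subseteq$ $\Z$-span of jumps'' works for $\widehat\C_{++}$, $\widehat\C_{+-}$, $\widehat\C_{-+}$, and your computation of the span of the $\delta(p+\tfrac12,n)$ is correct. However, the $\widehat\C_{--}$ case has a genuine gap. You parametrize sheets by $(p,q)$ and seek a single correction $c(p,q)$, but as the paper notes, $\C^{\cut}_{--}=\C\setminus\R$ is disconnected: each $(p,q)$ actually contributes \emph{two} simply-connected sheets (upper and lower half-planes). With only one correction per $(p,q)$, conditions (1) and (2) force $c(p,q+1)\equiv c(p,q)\pmod{\kappa_n}$, and then condition (3) reduces to $(-p-\tfrac12)^{n-1}\in\kappa_n\Z$, which is false (its $2$-adic valuation is $1-n<\nu_2(\kappa_n)$). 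The construction becomes solvable only if you allow independent corrections $c^\pm(p,q)$ on the two half-planes; taking $c^-(p,q)=0$ and $c^+(p,q)=-(1/2-p)^{n-1}$ then makes every modified jump a multiple of $\delta(p+\tfrac12,n)\in\kappa_n\Z$. This is essentially the correction the paper writes down \emph{after} the theorem when defining $\widehat\L_n$ on $\widehat\C_{--}$, but your sketch as written does not identify or use this extra degree of freedom.

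By contrast, the paper's own argument sidesteps all of this: since $H_1(\widehat\C_{\epsilon_1\epsilon_2})$ is generated by lifts of the commutator $aba^{-1}b^{-1}$, it suffices to compute the integral of $\omega_n$ over one such lift. Using that $\widehat L_n$ is a primitive, this integral is a signed sum of six $\Delta$ values, and the equalities $\Delta^{(0,1)}_{\epsilon_1\epsilon_2}=\Delta^{(1,\infty)}_{\epsilon_1\epsilon_2}$ force the four terms involving these to cancel in pairs. What remains is $-\Delta^{(-\infty,0)}(p,q)+\Delta^{(-\infty,0)}(p,q-1)=-\delta(p+1,n)$ or $-\delta(p+\tfrac12,n)$ according as $\epsilon_1=\pm 1$, uniformly in $\epsilon_2$. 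This single cancellation replaces your entire cocycle construction for $\widehat\C_{--}$.

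A minor point: your remark that ``equality is witnessed by $\Delta^{(-\infty,0)}_{++}(0,1)$'' conflates jumps with periods; the theorem only asserts containment, and individual jumps are not periods.
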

\begin{proof}
The commutator subgroup of $\pi_1(\C\setminus\{0,1\})$ is generated by the loops $\gamma_{k,l}=a^kb^la^{-k}b^{-l}$ where $a$ is a loop going counterclockwise around 0 and $b$ is a loop going clockwise around 1.
It is thus enough to compute the integral of $\omega_n$ along a lift of $\gamma_{k,l}$. Since $\widehat L_n$ is a primitive, this is always of the form $\frac{2\pi i}{(n-1)!}A_{k,l}$ where $A_{k,l}$ is an integral linear combination of terms $\Delta^I_{\epsilon_1\epsilon_2}(r,s)$. One easily checks that when $\epsilon_1=1$, $A_{k,l}\in\Z$, and when $\epsilon_1=-1$, $A_{k,l}\in\kappa_n\Z$.  For example, if $k=l=1$ we have
\begin{multline}
A_{1,1}=
-\Delta_{\epsilon_1\epsilon_2}^{(-\infty,0)}(p,q)+\Delta_{\epsilon_1\epsilon_2}^{(0,1)}(p+1,q)-\Delta_{\epsilon_1\epsilon_2}^{(1,\infty)}(p+1,q)+\\\Delta_{\epsilon_1\epsilon_2}^{(-\infty,0)}(p,q-1)-\Delta_{\epsilon_1\epsilon_2}^{(0,1)}(p,q-1)+\Delta_{\epsilon_1\epsilon_2}^{(1,\infty)}(p,q),
\end{multline}
which equals $-\delta(p+1,n)\in\Z$ when $\epsilon_1=1$ and $-\delta(p+\frac{1}{2},n)\in \kappa_n\Z$ when $\epsilon_1=-1$.
\end{proof}

\subsubsection{Modifying $\widehat L_n$ on $\widehat\C_{--}$} We now define $\widehat\L_n$ to be $\widehat L_n$ except on $\widehat\C_{--}$, where we define it as (with the convention that $\Imag(z+0i)>0$ and $\Imag(z-0i)<0$ for $z\in\R$) 
\begin{equation}
\widehat\L_n =\widehat L_n+\frac{(2\pi i)^n}{(n-1)!}\begin{cases}
-(-p-\frac{1}{2})^{n-1}+(-\frac{1}{2})^{n-1}&\text{if }\enspace\Imag(z)>0\\(-\frac{1}{2})^{n-1}&\text{if }\enspace\Imag(z)<0.\end{cases}
\end{equation}
This is well defined since the imaginary part is never zero on $\overline \C^{\cut}_{--}$.
The fact that $\widehat\L_n$ is defined modulo $\kappa_n\frac{2\pi i}{(n-1)!}$ on $\widehat\C_{--}$ is an easy consequence of Lemma~\ref{lemma:DeltaFormulas}.

\section{$\widehat\L_n$ relations and the symbol map}\label{sec:RelsAndSymb}
To motivate our treatment of $\widehat\L_n$ relations we begin with a review of the so-called \emph{symbol} map. 
The new idea is that in a certain sense, the vanishing of the symbol map is equivalent to the vanishing of a certain 1-form, which is a multiple of the 1-form $d\widehat\L_n$ (see Proposition~\ref{prop:SymbEqDiff}).

\subsection{The symbol map} The \emph{symbol map} is the map ($[0]$, $[1]$, and $[\infty]$ map to 0)
\begin{equation}
\symb_n\colon \Z[P^1_F]\to\wedge^2(F^*)\otimes \Sym^{n-2}(F^*),\qquad [z]\mapsto z\wedge(1-z)\otimes z^{\otimes(n-2)}.
\end{equation}
It follows from Goncharov's definition of $\Gamma(F,n)$ that $\symb_n$ factors through $\B_n(F)$, and that if $\beta\in\Z[P^1_F]$ satisfies $\delta(\beta)=0\in\B_{n-1}\otimes F^*$, then $\symb_n(\beta)=0$. The converse is false.

The two results below link the symbol map to functional relations for $\L_n$.
\begin{theorem}[{Zagier~\cite[Prop.~3]{ZagierPolylogs}, Goncharov~\cite[Thm.~1.17]{GoncharovMotivicGalois}}]\label{thm:ZeroSymb} Let $\beta\in\Z[P^1_{\C(t)}]$. If $\symb_n(\beta)=0$, then the $\L_n(\beta)$ (regarded as a function in $t$) is constant. Moreover, if $\L_n(\beta)$ is identically 0, then $\delta(\beta)=0$, so $\beta(t)\in\Z[P^1_\C]$ is constant in $\B_n(\C)$.
\end{theorem}


The following discussion serves mainly to motivate later definitions. Let $\beta=\sum_{i=1}^Mr_i[z_i]\in\Z[F(t)]$ and suppose that for $a_1,\dots,a_N\subset F(t)^*$ and integers $k_{ji}$ and $l_{ji}$ we have 
\begin{equation}\label{eq:zAnd1Minusz}
z_i=a_1^{k_{1i}}\cdots a_N^{k_{Ni}},\qquad 1-z_i=a_1^{l_{1i}}\cdots a_N^{l_{N i}}.
\end{equation}
For each integer $0<l<n-1$ and each multisubset (elements need not be distinct) $J=\{j_1,\dots,j_{l}\}$ of $\{1,\dots,N\}$ define
\begin{equation}
\pi_J=\sum_{i=1}^M r_ik_{j_1i}\cdots k_{j_{l}i}[z_i]\in\Z[F(t)].
\end{equation} 
The following is an easy induction argument using that $z_i^{\otimes l}=\sum_{|J|=l} k_{j_1i}\cdots k_{j_{l}i}a_{j_1}\otimes\dots\otimes a_{j_{l}}$.

\begin{proposition}\label{prop:LowerLevelMotivation}
Suppose $\symb_n(\beta)=0$. If for some $t_0$, the elements $\pi_J(t_0)$ are zero in $\B_{n-l}(F)$ for all $J$ with $\vert J\vert=l$, then then $\delta(\beta)=0\in\B_{n-1}(F(t))\otimes F(t)^*$. In particular, it follows that
\begin{equation}
\beta(t)-\beta(t_0)\in R_n(F) \text{ for all } t.
\end{equation}
\end{proposition}

\subsection{Our setup}
Let $a_i$ and $\widetilde a_i$ be formal variables. We think of $\widetilde a_i$ as a logarithm of $a_i$. 
Consider the polynomial rings
\begin{equation}\label{eq:SandStilde}
S=\Z[a_1^{\pm 1},a_2^{\pm 1},\dots], \qquad \widetilde S=\Z[\widetilde a_1,\widetilde a_2,\dots].
\end{equation}
Let $\widetilde S_k\subset\widetilde S$ denote the group of homogeneous polynomials of degree $k$, and let $U$ denote the free multiplicative group on the $a_i$. We have a canonical group homomorphism
\begin{equation}
\pi\colon \widetilde S_1\to U,\qquad \widetilde a_i\mapsto a_i.
\end{equation}
We shall consider elements $\alpha\in\Z[\widetilde S_1\times \widetilde S_1]$. Each such can be canonically written as $\sum_{i=1}^M r_i(u_i,v_i)$ where $r_i\in\Z$ and 
\begin{equation}\label{eq:uivi}
u_i=\sum_{j=1}^Nk_{ji}\widetilde a_j\in\widetilde S_1,\quad v_i=\sum_{j=1}^Nl_{ji}\widetilde a_j\in\widetilde S_1
\end{equation}

\begin{convention} For $\alpha\in\Z[\widetilde S_1\times \widetilde S_1]$ we always define $u_i$ and $v_i$ as in~\eqref{eq:uivi}. We similarly define $r=(r_1,\dots,r_M)$, $K=\{k_{ji}\}$, $L=\{l_{ji}\}$, and
\begin{equation}
 z_i=\pi(u_i)=\prod_{j=1}^Na_j^{k_{ji}}\in U,\quad w_i=\pi(v_i)=\prod_{j=1}^Na_j^{l_{ji}}\in U.
 \end{equation}
\end{convention}

\subsection{Realizations} Let $\alpha\in\Z[\widetilde S_1\times \widetilde S_1]$.

\begin{definition}\label{def:RealizationScheme}  The \emph{realization scheme} for $\alpha$ is the scheme $X_\alpha$ defined by the \emph{realization equations}
\begin{equation}\label{eq:realizationIdealEqs}
\prod_{j=1}^Na_j^{k_{ji}}+\prod_{j=1}^Na_j^{l_{ji}}-1\in \Z[a_1^{\pm 1},\dots,a_N^{\pm 1}], \qquad i=1,\dots, M.
\end{equation}
More precisely $X_\alpha=\Spec(\Z[a_1^{\pm 1},\dots,a_N^{\pm 1}]/I_\alpha)$ where $I_{\alpha}$ is the ideal generated by the realization equations. For a field $F$ we let $X_\alpha(F)=\Spec(F[a_1^{\pm 1},\dots,a_N^{\pm 1}]/I_\alpha)$. 
\end{definition}
Note that $X_\alpha$ only depends on $K$ and $L$.
\begin{convention} By a point $p$ in $X_\alpha(F)$ we always mean a rational point, i.e.~a ring homomorphism $p\colon\Z[a_1^{\pm 1},\dots,a_N^{\pm 1}]\to F$ killing the realization ideal. We also assume that $p$ is smooth. We extend $p$ to $S$ by mapping $a_i$ to 1 for $i>N$. Then $p$ restricts to a group homomorphism $p_U\colon U\to \C^*$.
\end{convention}

\begin{definition}\label{def:logpoint} Let $p$ be a point in $X_\alpha(\C)$. A \emph{lift} of $p$ is a homomorphism $\widetilde p\colon\widetilde S_1\to\C$ lifting $p_U$ in the sense that $\exp\circ\widetilde p=p_U\circ \pi$. A point together with a lift is called a \emph{log-point}.
\end{definition}

A log-point $\widetilde p$ in $X_\alpha(\C)$ determines elements
\begin{equation}\label{eq:pandptilde}
\widetilde p(\alpha)=\sum_{i=1}^M r_i[(\widetilde p(u_i),\widetilde p(v_i))]\in\Z[\widehat\C],\qquad p(\alpha)=\sum_{i=1}^M r_i[z_i]\in\Z[\C\setminus\{0,1\}]\subset\Z[P^1_\C]
\end{equation}
such that $r(\widetilde p(\alpha))=p(\alpha)$, where $r$ is the covering $\widehat\C\to\C\setminus\{0,1\}$.
We shall think of $\alpha$ as a purely symbolic representation of a Bloch group element, and we refer to $\widetilde p(\alpha)$ and $p(\alpha)$ as \emph{realizations} of $\alpha$ in $\Z[\widehat\C]$ and $\Z[P^1_\C]$, respectively.
If $Y\subset X_\alpha(\C)$ is a smooth submanifold and $\widetilde p_Y$ is a family of log-points $Y$, \eqref{eq:pandptilde} gives rise to maps
\begin{equation}\label{eq:ptildeYandpY}
\widetilde p_Y(\alpha)\colon Y\to\Z[\widehat\C],\qquad p_Y(\alpha)\colon Y\to \Z[\C\setminus\{0,1\}].
\end{equation}
In particular, $\widehat\L_n\circ \widetilde p_Y(\alpha)$ and $\L_n\circ p_Y(\alpha)$ are functions on $Y$. If the logarithms of the coordinates are smooth, we say that $\widetilde p_Y$ is a \emph{smooth family of log-points over $Y$}. If so, $\widehat\L_n\circ \widetilde p_Y(\alpha)$ is smooth.

\subsection{Differential $\widehat\L_n$ relations and the symbol}
For an integer $k>0$ let $\Omega^1_k(\widetilde S)$ denote the group of 1-forms on $S$ of degree $k$ (finite formal sums of terms $f_I d\widetilde a_i$ where $f_I$ is a degree $k$ monomial). 
\begin{definition}\label{def:wnalpha} We say that $\alpha$ is a \emph{differential $\widehat\L_n$ relation} if
\begin{equation}
w_n(\alpha)=\sum_{i=1}^{M} r_iu_i^{n-2}(u_idv_i-v_idu_i)
\end{equation}
is $0$ in $\Omega^1_{n-1}(\widetilde S)$.
\end{definition}

\begin{proposition}\label{prop:LnhatRels} Suppose $\alpha$ is a differential $\widehat\L_n$ relation. For any smooth family $\widetilde p_Y$ of log-points over a connected $Y\subset X_\alpha(\C)$ the function $\widehat\L_n\circ\widetilde p_Y(\alpha)$ is constant on $Y$.
\end{proposition}
\begin{proof}
This follows from the fact that $d\widehat\L_n(u,v)=(-1)^n\frac{n-1}{n!}u^{n-2}(udv-vdu)$.
\end{proof}

\begin{remark}
Although any differential $\widehat\L_n$ relation with $\dim (X_\alpha(\C))>0$ provides local $\widehat\L_n$ relations, the value of the constant $\widehat\L_n\circ\widetilde p_Y(\alpha)$ may depend dramatically on the choice of smooth logarithms (see Example~\ref{ex:LogDependence} for an example).
\end{remark}

The result below relates our notion of differential $\widehat\L_n$ relation to the vanishing of the symbol.
\begin{proposition}\label{prop:SymbEqDiff}
The 1-form $w_n(\alpha)$ vanishes in $\Omega^1_{n-1}(\widetilde S)$ if and only if the element $A=\sum_{i=1}^Mr_i(z_i\wedge w_i)\otimes z_i^{n-2}$ in $\wedge^2(U)\otimes\Sym^{n-2}(U)$ has order $1$ or $2$.
\end{proposition}
\begin{proof}
Consider the composition
\begin{equation}
\wedge^2(U)\otimes\Sym^{n-2}(U)\overset{\cong}{\to}\wedge^2(\widetilde S_1)\otimes\Sym^{n-2}(\widetilde S_1)\overset{\cong}{\to}\wedge^2(\widetilde S_1)\otimes \widetilde S_{n-2}\hookrightarrow \Omega^1_{n-1}(\widetilde S).
\end{equation}
The left isomorphism is induced by the canonical map $U\to \widetilde S_1$ taking $a_i$ to $\widetilde a_i$, the middle isomorphism is induced by the canonical identification $\Sym^{k}(\widetilde S_1)\cong \widetilde S_k$, and the right map takes $\widetilde a_i\wedge\widetilde a_j\otimes f$ to $f(\widetilde a_id\widetilde a_j-\widetilde a_jd\widetilde a_i)$. The right map is injective modulo 2-torsion, and the composition takes $A$ to $w_n(\alpha)$. This proves the result.
\end{proof}

\begin{corollary}
Suppose $\beta=\sum_{i=1}^Mr_i[z_i]$ is in the kernel of $\symb_n$, and suppose that there are multiplicatively independent functions $a_j\in\C(x)^*$ such that~\eqref{eq:zAnd1Minusz} holds. Any choice of local log branches of the $a_i$ determines a local $\widehat\L_n$ relation 
\begin{equation}
\sum_{i=1}^M\widehat\L_n(u_i,v_i)=c,
\end{equation}
where $c$ is a constant, $u_i$ and $v_i$ are given by \eqref{eq:uivi} with $\widetilde a_i=\log(a_i)$.
\end{corollary}

\begin{remark}
There are many examples where~\eqref{eq:zAnd1Minusz} only holds up to signs. For example the element $[x]+(-1)^n[x^{-1}]\in\B_n(\C(x))$ or $[x]+[\frac{1}{1-x}]+[1-x^{-1}]\in\B_3(\C(x))$. The same is true for Goncharov's element $R(x,y,z)$ (see Conjecture~\ref{conj:GonConj0}~\ref{R3Conj}), Gangl's 931 term relation~\cite{Gangl931} and many others. As we see in 
section~\ref{sec:CpmRealizations} such $\L_n$ relations have lifts to $\widehat\C_{\pm}$ (but not $\widehat\C$).
\end{remark}

\subsection{Examples}\label{sec:Examples} For notational convenience, we shall occasionally use \emph{multiplicative notation} to denote elements in $\widetilde S_1\times\widetilde S_1$, e.g.~we write $ (a_1a_2,a_3a_4)$ instead of $(\widetilde a_1+\widetilde a_2,\widetilde a_3+\widetilde a_4)$. We shall also occasionally denote the free variables by other symbols than $a_i$.

\begin{example}\label{ex:BasicL2Rel}
The element $[(\widetilde a_1,\widetilde a_2)]+[(\widetilde a_2,\widetilde a_1)]$ is a differential $\widehat\L_2$ relation. Clearly, $X_{\alpha}(\C)=\C\setminus\{0,1\}$ and a lift of a point $x\in X_{\alpha}(\C)$ is a pair of complex numbers $(u,v)$, with $e^u=x$ and $e^v=1-x$. The corresponding relation is
\begin{equation}\label{eq:BasicPlusRel}
\widehat\L_2(u,v)+\widehat\L_2(v,u)=\frac{\pi^2}{6}\in\C/4\pi^2\Z,
\end{equation}
which we regard as a lift of the relation $\L_2(x)+\L_2(1-x)=0$.
\end{example}

\begin{example}[The lifted five term relation]\label{ex:LiftedFTSymb}  
The element
\begin{equation}
\alpha=[(a_1,a_3)]-[(a_2,a_4)]+[(\frac{a_2}{a_1},\frac{a_5}{a_1})]-[(\frac{a_2a_3}{a_1a_4},\frac{a_5}{a_1a_4})]+[(\frac{a_3}{ a_4},\frac{a_5}{a_4})]
\end{equation}
is a differential $\widehat\L_2$ relation. One easily checks that $X_\alpha(\C)=\{(x,y)\in\C\setminus\{0,1\}\bigm\vert x\neq y\}$ with 
\begin{equation}
a_1=x,\quad a_2=y,\quad a_3=1-x,\quad a_4=1-y,\quad a_5=x-y.
\end{equation}
For each log-point $\widetilde p$, the realization $\widetilde p(\alpha)$ is an instance of Neumann's lifted five term relation~\cite[Def.~3.2]{ZickertAlgK}. It thus follows (recall that Neumann's $R$ equals $\widehat\L_2-\frac{\pi^2}{6}$) that $\widehat\L_2(\widetilde p(\alpha))=\frac{\pi^2}{6}$. 
\end{example}

\begin{example}[The (inverted) lifted five term relations]\label{ex:InvFTSymb}
By Example~\ref{ex:BasicL2Rel}
\begin{equation}
\beta=-[(a_3,a_1)]+[(a_4,a_2)]-[(\frac{a_5}{a_1},\frac{a_2}{a_1})]+[(\frac{a_5}{a_1a_4},\frac{a_2a_3}{a_1a_4})]-[(\frac{a_5}{a_4},\frac{a_3}{ a_4})]
\end{equation}
is also a differential $\widehat\L_2$ relation with $\widehat\L_2(\widetilde p(\beta))=0\in\C/4\pi^2\Z$ for each log-point $\widetilde p$.
\end{example}

\begin{example}[A 31 term relation]\label{ex:31TermRel} The free variables $a_i$ with $i\in\{1,2,3\}$, and $b_J$ with $\emptyset\neq J\subset\{1,2,3\}$ admit a natural cyclic $\Z_3$ action. The 31 term element
\begin{equation}
\alpha=[(a_1a_2a_3,b_{123})]+\sum_{\sigma\in\Z_3}\sigma A,
\end{equation}
where $A$ is given by
\begin{equation}
\begin{gathered}
[(a_1,b_1)]-[(\frac{b_1}{b_{12}},\frac{a_1b_2}{b_{12}})]-[(\frac{b_1}{b_{13}},\frac{a_1b_3}{b_{13}})]+[(\frac{b_1}{b_{123}},\frac{a_1b_{23}}{b_{123}})]-[(a_1a_2,b_{12})]+\\
-[(\frac{a_1b_2}{b_{12}},\frac{b_1}{b_{12}})]-[(\frac{a_1b_3}{b_{13}},\frac{b_1}{b_{13}})]+[(\frac{b_1b_{123}}{b_{12}b_{13}},\frac{a_1b_2b_3}{b_{12}b_{13}})]+
[(\frac{a_1a_2b_3}{b_{123}},\frac{b_{12}}{b_{123}})]+[(\frac{a_1b_2b_3}{b_{12}b_{13}},\frac{b_1b_{123}}{b_{12}b_{13}})]
\end{gathered}
\end{equation}
is a differential $\widehat\L_3$ relation.
The points in $X_\alpha(F)$ may be identified with triples $(x_1,x_2,x_3)\in F^*$ all products of which are distinct from 1, with the identification $a_i=x_i$, $b_J=\prod_{j\in J}x_j$. We shall see later (Example~\ref{ex:R31InR3}) that when $F=\C$, the relation $\widehat\L_3(\widetilde p(\alpha))$ is zero in $\C/\frac{(2\pi i)^3}{2}\Z$ for each log-point $\widetilde p$.
\end{example}

\begin{remark}
After a change of variables $(x_1,x_2,x_3)=(\frac{y_2(1-y_1(1-y_3))}{1-y_3(1-y_2)},y_3,\frac{1}{1-y_1(1-y_3)})$ the element $p(\alpha)$ is an instance of Goncharov's 22 term element $R(y_1,y_2,y_3)$ up to instances of $[x]=[x^{-1}]$ and $[x]+[1-x]+[1-1/x]=[1]$.
\end{remark}

\subsection{Realizations in $\widehat\C_{\pm}$}\label{sec:CpmRealizations}
To define realizations of $\alpha$ in $\widehat\C_{\pm}$ we need additional data in the form of a \emph{sign determination}, which is a vector $\mathcal V=\big((\epsilon_{1,1},\epsilon_{2,1}),\dots,(\epsilon_{1,M},\epsilon_{2,M})\big)$ of sign pairs. Given such we have a realization scheme $X_{\alpha,\mathcal V}$ defined as in Definition~\ref{def:RealizationScheme}, using the ideal generated by $\epsilon_{1,i}\pi(u_i)+\epsilon_{2,i}\pi(v_i)-1$. If $\widetilde p$ is a lift of a point in $X_{\alpha,\mathcal V}$, we can define $\widetilde p(\alpha)\in\Z[\widehat\C_{\pm}]$ and $p(\alpha)\in\Z[\C\setminus\{0,1\}$ as in~\eqref{eq:pandptilde} (but with $z_i=\epsilon_{1,i}\pi(u_i)$). Note that $(\widetilde p(u_i),\widetilde p(v_i))$ is in the component $\widehat\C_{\epsilon_{1,i}\epsilon_{2,i}}$ of $\widehat\C_{\pm}$. We shall thus sometimes denote $(u_i,v_i)\in\widetilde S_1\times\widetilde S_1$ by $(u_i,v_i)_{\epsilon_{1,i},\epsilon_{2,i}}$.

\begin{example}\label{ex:2term}
$[(\widetilde a_1,\widetilde a_2)_{-+}]+(-1)^n[(-\widetilde a_1,\widetilde a_2-\widetilde a_1)_{-+}]$ is a differential $\widehat \L_n$ relation. The corresponding relation is an instance of~\eqref{eq:Tauhat}.
\end{example}

\begin{example}\label{ex:3term}
$[(\widetilde a_1,\widetilde a_2)]+[(-\widetilde a_2,\widetilde a_1-\widetilde a_2)]+[(\widetilde a_2-\widetilde a_1,-\widetilde a_1)]$ is a differential $\widehat\L_3$ relation. The corresponding $\widehat \L_3$ relation for $\mathcal V=\big((1,1),(1,-1),(-1,1)\big)$ or any cyclic permutation of $\mathcal V$ is an instance of~\eqref{eq:Sigmahat}.
\end{example}

\begin{example}[Goncharov's 22 term relation]\label{ex:LiftedGonRel} Consider free variables $\alpha_i$, $\beta_i$, $\gamma_i$ with $i\in\{1,2,3\}$ and an additional variable $\delta$. Let $\Z_3$ act by cyclic permutation on $\alpha_i$, $\beta_i$  and $\gamma_i$ and trivially on $\delta$. The element
\begin{equation}
[(\alpha_1\alpha_2\alpha_3,\delta)_{-+}]+\sum_{\sigma\in \Z_3} \sigma(A),
\end{equation}
where $A$ is given by
\begin{equation}
\begin{gathered}
[(\alpha_i,\gamma_i)_{++}]+[(\beta_i,\alpha_i\gamma_{i-1})_{++}]-[(\frac{\alpha_{i-1}}{\beta_i},\frac{\gamma_{i-1}\gamma_i}{\beta_i})_{++}]+[(\frac{\beta_i}{\alpha_{i-1}\alpha_i},\frac{\gamma_i}{\alpha_{i-1}\alpha_i})_{+-}]+\\
[(\frac{\alpha_i\beta_{i-1}}{\beta_{i+1}},\frac{\gamma_{i+1}\beta_i}{\beta_{i+1}})_{++}]+[(\frac{\beta_i}{\alpha_i\beta_{i-1}},\frac{\delta}{\alpha_i\beta_{i-1}})_{-+}]-[(\frac{\alpha_{i-1}\alpha_i\beta_{i+1}}{\beta_i},\frac{\delta\gamma_i}{\beta_i})_{++}],
\end{gathered}
\end{equation}
is a differential $\widehat\L_3$ relation. The $F$ points in $X_{\alpha,\mathcal V}$ may be identified with 
\begin{equation}
\left\{(y_1,y_2,y_3)\in(F\setminus\{0,1\})^3\bigm\vert y_i(1-y_{i-1})\neq 1 \text{ (indices mod 3)},\quad y_1y_2y_3\neq -1\right\}
\end{equation}
with $\alpha_i=y_i$, $\beta_i=1-y_i(1-y_{i-1})$, $\gamma_i=1-y_i$, and $\delta=1+y_1y_2y_3$. For each point in $p\in X_{\alpha,\mathcal V}(\C)$, $p(\alpha)$ is an instance of Goncharov's 22 term relation, so we have $\L_3(p(\alpha))=3\zeta(3)$ (see~\cite[p.~428]{ZagierPolylogs}). One can show that $\widehat\L_3(\widetilde p(\alpha))=3\zeta(3)$ holds as well (see Remark~\ref{rm:Gon22Lift}).
\end{example}

In the previous examples the choice of lift made no difference to the value of $\widehat\L_n(\widetilde p(\alpha))$. In other words, logarithms of $a_i$ could be chosen independently and arbitrarily without affecting the $\widehat\L_n$ relation. The next example shows that this is not always the case. 
\begin{example}\label{ex:LogDependence}
$\alpha=[(2\widetilde a_1+2\widetilde a_3,-\widetilde a_1+\widetilde a_2)]-2^{n-1}[(\widetilde a_1+\widetilde a_3,-2\widetilde a_1+\widetilde a_2-\widetilde a_3)]$ is a differential $\widehat\L_n$ relation. One checks that $X_{\alpha,\mathcal V}(\C)$ is empty for $\mathcal V=\big((1,1),(1,1)\big)$, so there are no realizations in $\widehat\C$. For $\mathcal V=\big((-1,1),(1,1)\big)$, $X_{\alpha,\mathcal V}(\C)$ is given by
\begin{equation}\label{eq:Additiveatilderels}
a_1=x, \qquad a_2=\frac{1}{2}(\omega+1)x,\qquad a_3=\frac{\omega}{x},\qquad x\in\C\setminus\{0\},\qquad 2\omega^2-\omega+1=0.
\end{equation}
For any log-point $\widetilde p$, $p(\alpha)=[-\omega^2]-2^{n-1}[\omega]\in\Z[\C\setminus\{0,1\}]$, which only depends on $\omega$. However, one easily checks (e.g.~numerically) that different lifts give different values of $\widehat\L_n(\widetilde p(\alpha))$ when $n>2$. We shall not need the values, so we omit them.
\end{example}

\subsection{Realizations for arbitrary fields} Fix a $\Z$-extension 
\begin{equation}
0\to \Z\overset{\iota}{\to} E\overset{\pi}{\to}F^*\to 0
\end{equation}
of $F^*$ such that $E$ is torsion free. If $F=\C$ we choose the extension given by the exponential function. We can then define (omitting $E$ from the notation)
\begin{equation}
\widehat F=\Big\{(u,v)\in E\times E\bigm\vert \pi(u)+\pi(v)=1\Big\}.
\end{equation}
One can define log-points in $X_{\alpha}(F)$ as lifts of points with values in $E$ exactly as in Definition~\ref{def:logpoint}. For any $\alpha\in\Z[\widetilde S_1\times\widetilde S_1]$ a log-point $\widetilde p$ determines realizations $\widetilde p(\alpha)\in\Z[\widehat F]$ and $p(\alpha)\in\Z[P^1_F]$ with $r(\widetilde p(\alpha))=p(\alpha)$ as in~\eqref{eq:pandptilde}. Here $r\colon\widehat F\to F\setminus\{0,1\}$ takes $(u,v)$ to $\pi(u)$.

\subsubsection{The other variant} If $F$ does not have characteristic 2 (equivalently if $-1\neq 1$ in $F$), we may also define $\widehat F_{\pm}$ as above, but with $\epsilon_1\pi(u)+\epsilon_2\pi(v)=1$ and  $r\colon\widehat F_{\pm}\to F\setminus\{0,1\}$ taking $(u,v)$ to $\epsilon_1\pi(u)$. A log-point  in $X_{\alpha,\mathcal V}(F)$ then gives rise to a realization of $\alpha$ in $\Z[\widehat F_{\pm}]$.

\section{The lifted Bloch complexes}\label{sec:LiftedBloch}
We now define a complex $\widehat\Gamma(F,n)$ lifting Goncharov's complex $\Gamma(F,n)$. 
\subsection{Review of Goncharov's construction of $\Gamma(F,n)$} Goncharov's complex is defined in terms of groups $\B_k(F)=\Z[P^1_F]/R_k(F)$, where the $R_k(F)\subset\Z[P^1_F]$ are defined inductively starting with $k=2$. For a field $K$, let $A_2(K)$ denote the kernel of the map $\delta\colon\Z[P^1_K]\to\wedge^2(K^*)$ taking $[z]$ to $z\wedge(1-z)$ (and $[0]$, $[1]$ and $[\infty]$ to 0). Then $R_2(F)$ is generated by $[0]$, $[\infty]$, and elements of the form $p(\alpha)-q(\alpha)$, with $\alpha\in\A_2(F(Y))$, and $p$ and $q$ are points on a geometrically irreducible smooth curve $Y$ over $F$ with function field $F(Y)$. If $R_{k-1}(K)$ has been defined for all fields $K$, there is a map $\delta\colon\Z[P^1_K]\to \B_{k-1}(K)\otimes K^*$ taking $[z]$ to $[z]\otimes z$ (and $[0]$ and $[\infty]$ to 0). Letting $\A_k(K)$ denote its kernel, $R_k(F)$ is defined as above, but with $\A_k(F(Y))$ instead of $\A_2(F(Y))$. One then shows that the $\delta$ maps induce maps 
\begin{equation}
\delta\colon\B_k(F)\to\B_{k-1}(F)\otimes F^*, \quad k>2,\qquad \delta\colon\B_2(F)\to\wedge^2(F^*),
\end{equation}
which induce boundary maps $\delta\colon\B_k(F)\otimes \wedge^l(F^*)\to \B_{k-1}(F)\otimes\wedge^{l+1}(F^*)$, taking $[z]\otimes a$ to $[z]\otimes z\wedge a$ for $k>2$, and $\B_2(F)\otimes \wedge^l(F^*)\to\wedge^{l+2}(F^*)$ taking $[z]\otimes a$ to $z\wedge (1-z)\wedge a$. This completes the construction. 

\begin{remark} Goncharov implicitly assumes that $x\wedge (-x)=x\wedge x=0\in\wedge^2(F^*)$ and that $[1]=0\in\B_2(F)$. It then follows that $[x]+[x^{-1}]$ and $[x]+[1-x]$ are in $R_2(F)$ for any $x\in F$.
\end{remark}

\subsection{Overview of our construction and main results}
In lifting Goncharov's construction we face two obstacles: Firstly, Goncharov allows $0$, $1$ and $\infty$ which have no lifts, and secondly, $\widehat\L_n(\widetilde p(\alpha))$ depends on the log-branches in a seemingly non-algebraic way. The first obstacle is addressed by considering a notion of \emph{permissible lifts} of \emph{zero-degenerate points} (Definition~\ref{def:ZeroDegLift}). The second obstacle is addressed by introducing the notion of log-points \emph{killing lower levels} (Definition~\ref{def:KillLower}), an inductive definition inspired by Proposition~\ref{prop:LowerLevelMotivation}. It turns out that if $\widetilde p$ kills the lower levels of $\alpha$, then $\widehat\L_n(\widetilde p(\alpha))$ is independent of the logarithms modulo $\frac{(2\pi i)^n}{n!}$, and in order to be well defined modulo $\frac{(2\pi i)^n}{(n-1)!}$ we need the additional concept of \emph{proper ambiguity} (Definition~\ref{def:ProperAmbiguity}), a non-inductive purely symbolic property. All the above concepts may be defined over an arbitrary field.

We define $\widehat\B_n(\widehat F)=\Z[\widehat F]/\widetilde R_n(F)$, where $\widetilde R_n(F)$ is defined as follows.

\begin{definition}\label{def:RntildeDef} The set $\widetilde R_n(F)$ is the subset of $\Z[\widehat F]$ generated by the following two types of relations, where $\alpha$ denotes a differential $\widehat\L_n$ relation with proper ambiguity.
\begin{enumerate}
\item $\widetilde p(\alpha)-\widetilde p_0(\alpha)$, where $\widetilde p_0(\alpha)$ kills the lower levels of $\alpha$, and $\widetilde p$ is a lift of a point in the same geometric component of $p_0$ in $X_\alpha(F)$.
\item $\widetilde p(\alpha)$, where the geometric component of $p$ in $\overline{X_\alpha(F)}$ contains a zero-degenerate point with a permissible lift.\label{item:2}
\end{enumerate}
\end{definition}

\begin{remark}
Goncharov does not need \eqref{item:2} since if $p_0(\alpha)\in\Z[\{0\}]$ for $\alpha\in\A_n(F(Y))$, then $p(\alpha)\in R_n(F)$. This is because $[0]\in R_n(F)$ by definition.
\end{remark}

There are homomorphisms
\begin{equation}\label{eq:NunHatDef}
\begin{aligned}
&\delta\colon\Z[\widehat F]\to\widehat\B_{n-1}(\widehat F)\otimes E,&&[(u,v)]\mapsto [(u,v)]\otimes u, &&n>2\\
& \delta\colon\Z[\widehat F]\to\wedge^2(E),&&[(u,v)]\mapsto u\wedge v.
\end{aligned}
\end{equation}

The proofs of the two next theorems are purely formal.

\begin{theorem}[Proof in Section~\ref{sec:RnTildeDef}]\label{thm:RntildeToRn}
The projection $\Z[\widehat F]\to\Z[F\setminus\{0,1\}]$ induced by $\pi\colon E\to F^*$ takes $\widetilde R_n(F)$ to $R_n(F)$.
\end{theorem}

\begin{theorem}[Proof in Section~\ref{sec:RnTildeDef}]\label{thm:Nuhat} The $\delta$ map takes $\widetilde R_n(F)$ to $0$ and thus descend to homomorphisms 
\begin{equation}
\delta\colon\widehat\B_n(\widehat F)\to\widehat\B_{n-1}(\widehat F)\otimes E,\qquad \delta\colon\widehat\B_2(\widehat F)\to\wedge^2(E).
\end{equation}
for $n>2$ and $n=2$, respectively.
\end{theorem}

By Theorem~\ref{thm:Nuhat} we have a chain complex $\widehat\Gamma(F,n)$:
\begin{equation}
\cxymatrix{{{\widehat\B_n(\widehat F)}\ar[r]^-{\delta_1}&\cdots\ar[r]^-{\delta_k}&{\widehat\B_{n-k}(\widehat F)\otimes\wedge^k(E)}\ar[r]^-{\delta_{k+1}}&\cdots\ar[r]^-{\delta_{n-2}}&{\widehat\B_2(\widehat F)\otimes\wedge^{n-2}(E)}\ar[r]^-{\delta_{n-1}}&{\wedge^n(E),}}}
\end{equation}
with maps given by
\begin{equation}
\begin{gathered}
\delta_1([(u,v)])=[(u,v)]\otimes u,\qquad \delta_{n-1}([(u,v)]\otimes a)=u\wedge v\wedge a, \\\delta_k([(u,v)]\otimes a)=[(u,v)]\otimes u\wedge a\quad \text{for }1<k<n-1.
\end{gathered}
\end{equation}
Moreover, the map 
\begin{equation}
\widehat\B_{n-k}(\widehat F)\otimes\wedge^k(E)\to\B_{n-k}(\widehat F)\otimes\wedge^k(F^*),\qquad [(u,v)]\otimes a \to [r(u,v)]\otimes \pi_*(a)
\end{equation}
gives rise to a chain map $r\colon\widehat\Gamma(F,n)\to\Gamma(F,n)$.

The proofs of the next two theorems are significantly more involved.

\begin{theorem}[Proof in Section~\ref{sec:RnEqLnhatRels}]\label{thm:LnbetaEqZero}
If $\beta$ is in $\widetilde R_n(\C)$ then $\widehat\L_n(\beta)=0$ in $\C/\frac{(2\pi i)^n}{(n-1)!}\Z$. 
\end{theorem}

\begin{theorem}[Proof in Section~\ref{sec:Comparison}]\label{thm:LnhatEqLnOnBhat} If $\beta\in H^1(\widehat\Gamma(F,n))=\Ker(\delta_1)$ we have
\begin{equation}
\mathfrak R_n(\widehat\L_n(\beta))=\L_n(r(\beta)).
\end{equation}
\end{theorem}

\begin{conjecture}\label{conj:R2tildeR3tilde} $\widetilde R_2(F)$ is generated by realizations of (inverted) lifted five term relations. $\widetilde R_3(F)$ is generated by realizations of the 31 term relation.
\end{conjecture}

\begin{proposition}
Assuming Conjecture~\ref{conj:R2tildeR3tilde} for $\widetilde R_2(F)$ we have an isomorphism
\begin{equation}\label{eq:ResultSecB2Iso}
H^1(\widehat\Gamma(F,2))\cong\widehat\B(F)
\end{equation}
induced by $[(u,v)]\mapsto -[(v,u)]$. When $F=\C$, $\widehat\L_2$ agrees with Neumann's $R$.
\end{proposition}
\begin{proof}
The conjecture implies that the map induces an isomorphism between $\widehat\B_2(F)$ and $\widehat\P_E(F)$ (see Section~\ref{sec:ArbitraryFieldsIntro}). This proves the result.
\end{proof}

\subsection{Zero-degenerate points and permissible lifts}
Let $\alpha\in\Z[\widetilde S_1\times \widetilde S_1]$. Let $\overline I_\alpha$ be the ideal in $\Z[a_1,\dots,a_N]$ obtained from $I_\alpha$ by clearing denominators in~\eqref{eq:realizationIdealEqs}, and let $\overline{X_\alpha(F)}=\Spec(\Z[a_1,\dots,a_N]/\overline I_\alpha)$. For $q\in X_\alpha(F)$ we can still define $q(\alpha)\in\Z[P^1_F]$. If we introduce a formal variable $\log(0)$ we can also define a lift of $q$ with values in $E\oplus\Z[\log(0)]$.

\begin{definition}\label{def:ZeroDegLift} A point $q$ in $X_\alpha(F)$ is \emph{permissible} if either $q(z_i)$ or $q(w_i)$ is non-zero for all $i$. A permissible point is \emph{zero-degenerate} if $q(\alpha)\in\Z[\{0\}]$. A lift $\widetilde q$ of a zero-degenerate point $q$ is \emph{permissible} if $\widetilde q(v_i)=0$ when $z_i=0$ and $\widetilde q(u_i)=0$ when $w_i=0$.
\end{definition}

\begin{example}\label{eq:R2Permis}
Consider the element $\beta$ from Example~\ref{ex:InvFTSymb}. For any $a,b\in F\setminus\{0,1\}$, the point $(a_1,a_2,a_3,a_4,a_5)=(a,a,b,b,0)$ is zero-degenerate and $(\widetilde a,\widetilde a,\widetilde b,\widetilde b,\log(0))$ is a permissible lift whenever $\widetilde a$ and $\widetilde b$ in $E$ are lifts of $a$, respectively, $b$. 
\end{example}

\begin{example}\label{eq:R3Permis}
In Example~\ref{ex:31TermRel} a point with $a_i=0$, $b_J=1$ in $\overline{X_\alpha(F)}$ is zero-degenerate with a permissible lift $\widetilde a_i=\log(0)$, $\widetilde b_J=0$.
\end{example}

\subsection{Killing lower levels and proper ambiguity}
We start by defining the concepts for $n=2$. Consider the homomorphism
\begin{equation}\label{eq:NuSymbDef}
\delta\colon\Z[\widetilde S_1\times\widetilde S_1]\to\wedge^2(\widetilde S_1),\qquad (u,v)\mapsto u\wedge v.
\end{equation}
The following is elementary.
\begin{lemma}\label{lemma:L2AndNu}
An element $\alpha\in\Z[\widetilde S_1\times\widetilde S_1]$ is a differential $\widehat\L_2$ relation if and only if $2\delta(\alpha)=0$. If so, $\delta(\alpha)=0$ if and only if $\sum_{i=1}^Mk_{ji}l_{ji}$ is even for all $j=1,\dots, N$.
\end{lemma}
\begin{definition}
A differential $\widehat\L_2$ relation $\alpha$ has \emph{proper ambiguity} if $\sum_{i=1}^Mk_{ji}l_{ji}$ is even for all $j=1,\dots, N$
\end{definition}

\begin{remark} The definition was inspired by Neumann's parity condition~\cite[Def.~4.3]{Neumann}, the absence of which changes his map $R$ by 2-torsion.
\end{remark}

\begin{example}
The five term relations (Examples~\ref{ex:LiftedFTSymb} and \ref{ex:InvFTSymb}) have proper ambiguity.
\end{example}

When $n=2$ there are no lower levels, so all log-points kill the lower levels by default, and we can define $\widetilde R_2(F)$ as in Definition~\ref{def:RntildeDef}. When $n>2$ we first define the notion of \emph{lower level projections}.

\begin{definition} Let $l\in{2,\dots, n-1}$ and $J=\{j_1,\dots,j_{n-l}\}$ be a multisubset of $\{1,\dots,N\}$. The elements
\begin{equation}
\pi_J(\alpha)=\sum_{i=1}^M r_ik_{j_1i}\cdots k_{j_{n-l}i}(u_i,v_i)\in \Z[\widetilde S_1\times \widetilde S_1]
\end{equation}
are called \emph{level $l$ projections} of $\alpha$. We shall occasionally allow $l$ to be $n$ and define $\pi_\emptyset(\alpha)=\alpha$.
\end{definition}

\begin{definition}\label{def:KillLower} Let $n>2$ and let $\alpha$ be a differential $\widehat\L_n$ relation. A log-point $\widetilde p$ \emph{kills the lower levels of $\alpha$} if
\begin{equation}
\widetilde p(\pi_J(\alpha))\in \widetilde R_l(F)
\end{equation}
for all $l$ and all $J$ with $|J|=n-l$.
\end{definition}

\begin{definition}\label{def:ProperAmbiguity} Let $n>2$. A differential $\widehat\L_n$ relation $\alpha$ has \emph{proper ambiguity} if 
\begin{enumerate}
\item The element $\sum_{i=1}^M r_i(u_i\wedge v_i)\otimes u_i^{\otimes (n-2)}$ is zero in $\wedge^2(\widetilde S_1)\otimes \widetilde S_{n-2}$.
\item For all $j=1,\dots, N$ the integer $\sum_{i=1}^M k_{ij}^{n-1}l_{ij}$ is divisible by $n$.
\end{enumerate}
\end{definition}
Note that the first condition is always satisfied up to $2$-torsion (see Proposition~\ref{prop:SymbEqDiff}). When $n=2$ the two conditions coincide.
We can now define $\widetilde R_n(F)$ as in Definition~\ref{def:RntildeDef}.

\begin{remark} If one excludes the proper ambiguity condition from Definition~\ref{def:RntildeDef}, Theorems~\ref{thm:RntildeToRn} and~\ref{thm:Nuhat} still hold modulo 2-torsion, and Theorem~\ref{thm:LnbetaEqZero} holds modulo $(2\pi i)^n/n!$. Theorem~\ref{thm:LnhatEqLnOnBhat} still holds.
\end{remark}

\begin{example}
Goncharov's 22 term relation satisfies the second condition (all integers are either 0 or 3), but not the first. 
\end{example}

\begin{example}
The 31 term relation has proper ambiguity.
\end{example}

\begin{example}\label{ex:R31InR3} Each realization of an (inverted) five term relation is in $\widetilde R_2(F)$ by Example~\ref{eq:R2Permis}.  A straightforward (but tedious) calculation shows that all realizations of the lower level projections of the 31 term relation are linear combinations of lifted (inverted) 5 term relations. It thus follows from Example~\ref{eq:R3Permis} that all realizations of the 31 term relation are in $\widetilde R_3(F)$.
\end{example}

\subsection{Proof of Therems~\ref{thm:RntildeToRn} and~\ref{thm:Nuhat}}\label{sec:RnTildeDef}
We begin with the case $n=2$.

\begin{lemma}\label{lemma:AnWithn2} Let $\alpha$ be a differential $\widehat\L_2$ relation with proper ambiguity. For any smooth curve $Y$ in $X_\alpha(F)$, the element $p_Y(\alpha)$ is in $\A_{2}(F(Y))$.
\end{lemma}
\begin{proof}
The homomorphism $\pi\colon\widetilde S_1\to U$ induces a homomorphism $\pi_*\colon\wedge^2(\widetilde S_1)\to\wedge^2(U)$. Since each point $p\in Y$ restricts to a homomorphism $U\to F^*$, we have a homomorphism $p_{Y}\colon U\to F(Y)^*$.
We now have
\begin{equation}
\delta(p_Y(\alpha))=\sum_{i=1}^M r_i(\pi(u_i)_Y)\wedge (1-\pi(u_i)_Y)=\sum_{i=1}^M r_i\pi(u_i)_Y\wedge \pi(v_i)_Y=p_{Y*}\circ \pi_*(\delta(\alpha)).
\end{equation}
By Lemma~\ref{lemma:L2AndNu}, $\delta(\alpha)=0$. This proves the result.
\end{proof}

\begin{lemma}\label{lemma:ThmWithn2} The covering map $r$ takes $\widetilde R_2(F)$ to $R_2(F)$.
\end{lemma}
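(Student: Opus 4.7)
The plan is to verify the statement on the two types of generators of $\widetilde R_2(\C)$ listed in Definition~\ref{def:RntildeDef}. Fix a differential $\widehat\L_2$ relation $\alpha$ with proper ambiguity; note that when $n=2$ the properness condition~\eqref{eq:Proper} is vacuous (the index set is empty), so every realization of $\alpha$ is automatically proper, which simplifies matters considerably.

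First I would treat generators of the first type, $\beta = \widetilde p(\alpha) - \widetilde q(\alpha)$ with $\widetilde q$ equivalent to $\widetilde p$. Using Definition~\ref{def:Equivalence}(3) I would produce an intermediate realization $\widetilde p''$ that is sign-equivalent to $\widetilde q$ and component-equivalent to $\widetilde p$. Because sign-equivalence leaves the underlying point unchanged (Lemma~\ref{lemma:pUnchanged}), one has $p''(\alpha) = q(\alpha)$, so $r(\beta) = p(\alpha) - p''(\alpha)$, and $p$ and $p''$ lie in a common irreducible component of $X_\alpha^{\mathcal V_\epsilon}(\C)$. I would then choose a smooth geometrically irreducible curve $Y$ in this component passing through both $p$ and $p''$, obtained by slicing the component with a generic affine-linear subspace of complementary dimension through the two points and normalizing the resulting curve if necessary. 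Lemma~\ref{lemma:AnWithn2} then yields $2 p_Y(\alpha) \in \A_2(\C(Y))$, and since $p$ and $p''$ are non-degenerate smooth points, Definition~\ref{def:RnDefOurs}(1) applied to the triple $(p, p'', 2 p_Y(\alpha))$ gives $2 r(\beta) \in R_2(\C)$.

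For a generator of the second type, $\beta = \widetilde p(\alpha)$ whose component in $\overline{X_\alpha^{\mathcal V_\epsilon}(\C)}$ contains a zero-degenerate point $q$, I would analogously construct a smooth geometrically irreducible curve $Y$ in this component through both $p$ and $q$, arranged so that its generic point still lies in the open locus $X_\alpha^{\mathcal V_\epsilon}(\C)$ (which is possible since $p$ itself lies there). Because the proof of Lemma~\ref{lemma:AnWithn2} only sees the generic point of $Y$ (the key equality $\nu_{2,\C(Y)}(2 p_Y(\alpha)) = 2 p_{Y*}\pi_*(\nu(\alpha))$ takes place in $\wedge^2(\C(Y)^*)$), it carries over verbatim to this setting to produce $2 p_Y(\alpha) \in \A_2(\C(Y))$. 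The presence of the zero-degenerate point $q$ on $Y$ then allows me to invoke Definition~\ref{def:RnDefOurs}(2) and conclude $2 r(\beta) = 2 p(\alpha) \in R_2(\C)$.

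The main obstacle is the geometric construction of a smooth geometrically irreducible curve through the two prescribed points in the higher-dimensional realization variety (and, in the second case, through an additional point possibly lying in the boundary of the open locus). This is a standard Bertini-plus-normalization argument, but should be spelled out briefly to guarantee both irreducibility and the fact that the generic point of $Y$ lands in $X_\alpha^{\mathcal V_\epsilon}(\C)$ so that the evaluation $p_Y \colon U(S)\to\C(Y)^*$ used in Lemma~\ref{lemma:AnWithn2} is well-defined and the resulting element $p_Y(\alpha)$ lives in $\Z[\C(Y)\setminus\{0,1\}]$.
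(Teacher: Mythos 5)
Your proposal is correct and follows essentially the same route as the paper: reduce to a common sign determination via a sign equivalence (using Lemma~\ref{lemma:pUnchanged}), pick a smooth irreducible curve in the realization variety through the two points (respectively through $p$ and the zero-degenerate point), apply Lemma~\ref{lemma:AnWithn2} to get $2p_Y(\alpha)\in\A_2(\C(Y))$, and conclude from the two generator types in Definition~\ref{def:RnDefOurs}. The only cosmetic difference is that you justify the existence of the curve by a Bertini-plus-normalization argument, whereas the paper simply invokes the standard fact (cited there from the literature) that two points of an irreducible variety lie on an irreducible curve.
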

\begin{proof} 
Recall that $\widetilde R_2(F)$ is generated by the two types in Definition~\ref{def:RntildeDef}. Let $\alpha$ be a differential $\widehat\L_2$ relation with proper ambiguity and let $\widetilde p$ and $\widetilde q$ be log-points in the same geometric component of $X_\alpha(F)$. Pick a geometrically irreducible curve $Y$ containing $p$ and $q$ (such exists e.g.~by~\cite[Cor.~1.9]{FrancoisPoonen}). By Lemma~\ref{lemma:AnWithn2}, $p_Y(\alpha)\in\A_2(F(Y))$ and it follows that
\begin{equation}
r(\widetilde p(\alpha)-\widetilde q(\alpha))=p(\alpha)-q(\alpha)\in R_2(F).
\end{equation}
Similarly, if a curve $Y$ in $X_{K,L}(F)$ containing $p$ contains a zero-degenerate point $q\in \overline{X_\alpha(F)}$, we have $r(\widetilde p)=p(\alpha)\in R_2(F)$.
\end{proof}

Now suppose $n>2$. Assume by induction that $r$ maps $\widetilde R_k(F)$ to $R_k(F)$ for all $k<n$. We begin with an elementary lemma, which holds for any integer $m\geq 2$.

\begin{lemma}\label{lemma:RkOfExtension} Let $\beta\in\Z[\widetilde S_1\times\widetilde S_1]$ and let $\widetilde p$ be a log-point. Suppose that for a geometrically irreducible smooth curve $Y\subset X_\alpha(F)$ one has $p_Y(\beta)\in\A_m(F(Y))$.
If $p(\beta)\in R_m(F)$ for some point $p$ in $Y$, then $p_Y(\beta)\in R_m(F(Y))$.
\end{lemma}
\begin{proof}
Suppose $p_Y(\beta)=\sum_{i=1}^Mr_i[x_i]$, where $x_i\in F(Y)$. The extension of scalars $X$ of $Y$ to $F(Y)$ is a geometrically irreducible smooth curve in $X_\alpha(F(Y))$ containing both $q=(x_1,\dots,x_M)$ and $p$. Moreover, $p_{X}(\beta)\in\A_m(F(Y)(X))$. It follows that $p(\beta)-q(\beta)\in R_m(F(Y))$ and since $p(\beta)\in R_m(F)\subset R_m(F(Y))$, it follows that $q(\beta)=p_Y(\beta)\in R_m(F(Y))$.
\end{proof}

\begin{lemma}\label{lemma:LowerLevelDiff}
For $n>2$, $\alpha$ is a differential $\widehat\L_n$ relation if and only if
all level $l$ projections of $\alpha$ are differential $\widehat\L_l$ relations, which again holds if and only if all level 2 projections are differential $\widehat\L_2$ relations. A differential $\widehat\L_n$ relation satisfies the first proper ambiguity condition if and only if all level 2 projections have proper ambiguity.
\end{lemma}
\begin{proof}
This follows from the fact that the multiplication map $\Omega^1_{l-1}(\widetilde S)\otimes\widetilde S_{n-l}\to\Omega^1_{n-1}(\widetilde S)$ is an isomorphism for all $l$.
\end{proof}

\begin{lemma}\label{lemma:An} Let $\alpha$ be a differential $\widehat\L_n$ relation with proper ambiguity and $\widetilde p$ a log-point killing the lower levels of $\alpha$. For any smooth curve $Y$ in $X_\alpha(F)$ containing $p$, $p_Y(\alpha)\in\A_{n}(F(Y))$.
\end{lemma}
\begin{proof}
Our induction hypothesis that $r$ maps $\widetilde R_k(F)$ to $R_k(F)$ for all $k<n$ implies in particular that $p(\pi_{J_m}(\alpha))$ is in $R_m(F)$ for all $n-m$ element multisets $J_m$. Let us prove by induction on $m$ that
 \begin{equation}\label{eq:AnInduction}
 p_Y(\pi_{J_m}(\alpha))\in \A_m(F(Y))
 \end{equation}
 holds for all $m=2,\dots, n$ and all $J_m$. The case $m=n$ is the desired statement, and the case $m=2$ follows from Lemmas~\ref{lemma:LowerLevelDiff} and~\ref{lemma:AnWithn2}. Suppose by induction that~\eqref{eq:AnInduction} holds for $m=k>2$. For any $J_{k+1}$ we have
\begin{equation}
\delta(p_Y(\pi_{J_{k+1}}(\alpha)))=\sum_{j=1}^N p_Y(\pi_{J\cup\{j\}}(\alpha))\otimes p_Y(a_j),
\end{equation}
which by Lemma~\ref{lemma:RkOfExtension} is zero in $\B_{k}(F(Y))\otimes F(Y)^*$. Hence, $p_Y(\pi_{J_{k+1}}(\alpha))\in \A_{k+1}(F(Y))$.
 \end{proof}

\begin{theorem} The covering map $r$ takes $\widetilde R_n(F)$ maps to $R_n(F)$.
\end{theorem}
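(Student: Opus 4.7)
The plan is to imitate the $n=2$ argument of Lemma~\ref{lemma:ThmWithn2} essentially verbatim, using Lemma~\ref{lemma:An} in place of Lemma~\ref{lemma:AnWithn2}. The inductive hypothesis that $r(2\widetilde R_k(\C))\subseteq R_k(\C)$ for $k<n$ has already been absorbed into the proof of Lemma~\ref{lemma:An}, so what remains is to check that the two classes of generators of $\widetilde R_n(\C)$ described in Definition~\ref{def:RntildeDef} have images under $r$ whose double lies in $R_n(\C)$.

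For a generator of type (1), take a proper realization $\widetilde p$ of a differential $\widehat\L_n$ relation $\alpha$ and a realization $\widetilde q$ equivalent to $\widetilde p$. By Definition~\ref{def:Equivalence} we may first apply a sign equivalence to $\widetilde q$; by Lemma~\ref{lemma:pUnchanged} this does not change $q(\alpha)$, and afterwards $\widetilde q$ shares the sign determination $\mathcal V_\epsilon$ of $\widetilde p$, while component-equivalence places $p$ and $q$ in the same component of $X_\alpha^{\mathcal V_\epsilon}(\C)$. Assuming $p\neq q$, I would choose a smooth geometrically irreducible curve $Y$ in this component passing through $p$ and $q$. Lemma~\ref{lemma:An} then yields $2p_Y(\alpha)\in \A_n(\C(Y))$, and the realization equations $\epsilon_1^i\pi(u_i)+\epsilon_2^i\pi(v_i)=1$ force every $\epsilon_1^i\pi(u_i)_Y$ to be an element of $\C(Y)\setminus\{0,1\}$, so $p$ and $q$ are non-degenerate points. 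Definition~\ref{def:RnDefOurs}(1) now delivers
\begin{equation*}
2r(\widetilde p(\alpha)-\widetilde q(\alpha))=2p(\alpha)-2q(\alpha)\in R_n(\C).
\end{equation*}
For a generator of type (2), where the component of $p$ in $\overline{X_\alpha^{\mathcal V_\epsilon}(\C)}$ contains a zero-degenerate point $q$, I would pick a smooth irreducible curve $Y$ inside that component containing both $p$ and $q$. Lemma~\ref{lemma:An} again gives $2p_Y(\alpha)\in\A_n(\C(Y))$, and Definition~\ref{def:RnDefOurs}(2) immediately produces $2r(\widetilde p(\alpha))=2p(\alpha)\in R_n(\C)$.

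The main obstacle is the algebro-geometric step of producing the required smooth irreducible curve $Y$, particularly in the type (2) case where the zero-degenerate point $q$ may lie on a singular or boundary locus of $\overline{X_\alpha^{\mathcal V_\epsilon}(\C)}$. One needs either to pass to a resolution/normalization on which $q$ becomes a smooth point, or to observe that Lemma~\ref{lemma:An} only requires smoothness at $p$ while Definition~\ref{def:RnDefOurs}(2) only requires the existence of a zero-degenerate point anywhere on $X$. Once this is addressed, the argument is purely formal and parallels the $n=2$ case word-for-word.
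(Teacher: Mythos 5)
Your argument is correct and follows essentially the same route as the paper, whose proof of this theorem consists precisely of running the argument of Lemma~\ref{lemma:ThmWithn2} with Lemma~\ref{lemma:An} substituted for Lemma~\ref{lemma:AnWithn2}, exactly as you do for both types of generators. The curve-existence issue you flag is the one point the paper also treats lightly; it is addressed in the remark following the theorem, which invokes the fact that any two distinct points of a geometrically irreducible variety lie on an irreducible curve.
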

\begin{proof} 
The proof is the same as that of Lemma~\ref{lemma:ThmWithn2} except that Lemma~\ref{lemma:An} is used instead of Lemma~\ref{lemma:AnWithn2}.
\end{proof}

\begin{theorem}
The map $\delta$ from \eqref{eq:NunHatDef} takes $\widetilde R_n(F)$ to $0$.
\end{theorem}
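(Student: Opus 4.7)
The plan is to argue by induction on $n$, and in the inductive step to rewrite $\widehat\nu_n(\widetilde p(\alpha))$ in terms of the level $n-1$ projections $\pi_j(\alpha)$, so that the defining conditions of a \emph{proper realization} kick in automatically.

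First I would dispose of the case $n=2$. Here a differential $\widehat\L_2$ relation $\alpha$ with proper ambiguity satisfies $\nu(\alpha)=0$ in $\wedge^2(\widetilde S_1)$ (the remark following Lemma~\ref{lemma:L2AndNu}; proper ambiguity upgrades the $2$-torsion identity $2\nu(\alpha)=0$ of Lemma~\ref{lemma:L2AndNu} to an equality). A lift $\widetilde p$ is a group homomorphism $\widetilde S_1\to\C$, so it induces $\widetilde p_*\colon\wedge^2(\widetilde S_1)\to\wedge^2(\C)$, and directly from the definitions
\begin{equation*}
\widehat\nu_2(\widetilde p(\alpha))=\sum_i r_i\,\widetilde p(u_i)\wedge\widetilde p(v_i)=\widetilde p_*(\nu(\alpha))=0.
\end{equation*}
This handles both types of generators of $\widetilde R_2(\C)$ simultaneously.

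Next, for $n>2$, assume inductively that $\widetilde R_{n-1}(\C)$ lies in $\ker\widehat\nu_{n-1}$; equivalently, $\widetilde R_{n-1}(\C)$ is zero in $\widehat\P_{n-1}(\C)$ by definition. Write $u_i=\sum_j k_{ji}\widetilde a_j$ and set $b_j=\widetilde p(\widetilde a_j)\in\C$. Using bilinearity of $\otimes$ and the definition of $\widehat\nu_n$ I compute
\begin{equation*}
\widehat\nu_n(\widetilde p(\alpha))=\sum_i r_i\,[\widetilde p(u_i,v_i)]\otimes\widetilde p(u_i)=\sum_j\bigg(\sum_i r_ik_{ji}\,[\widetilde p(u_i,v_i)]\bigg)\otimes b_j=\sum_{j=1}^N\widetilde p(\pi_j(\alpha))\otimes b_j,
\end{equation*}
where $\pi_j(\alpha)$ is the level $n-1$ projection of $\alpha$ in the sense of Definition~\ref{def:LowerLevelProjections}. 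By Theorem~\ref{thm:DiffRelChar} each $\pi_j(\alpha)$ is a differential $\widehat\L_{n-1}$ relation, and by Definition~\ref{def:Proper} of a proper realization, $\widetilde p(\pi_j(\alpha))\in\widetilde R_{n-1}(\C)$. Hence each term $\widetilde p(\pi_j(\alpha))$ is already zero in $\widehat\P_{n-1}(\C)$, so the tensor $\widetilde p(\pi_j(\alpha))\otimes b_j$ vanishes in $\widehat\P_{n-1}(\C)\otimes\C$. This shows $\widehat\nu_n(\widetilde p(\alpha))=0$ for any proper realization of any differential $\widehat\L_n$ relation with proper ambiguity.

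Finally I would verify that both families of generators of $\widetilde R_n(\C)$ in Definition~\ref{def:RntildeDef} are killed. For type (1), $\widetilde p(\alpha)-\widetilde q(\alpha)$: the computation above gives $\widehat\nu_n(\widetilde p(\alpha))=\widehat\nu_n(\widetilde q(\alpha))=0$ separately, since $\widetilde p$ and $\widetilde q$ are both proper realizations (the notion of equivalence preserves properness, as the lower level projections are unchanged by sign or component equivalence). For type (2), $\widetilde p(\alpha)$ with a zero-degenerate point and permissible lift in the component, the very same identity applies because the vanishing statement only depends on $\widetilde p$ being a proper realization. The one subtle point, and the main obstacle, is to justify in the inductive step that one may indeed treat the $\widetilde p(\pi_j(\alpha))$ as elements of $\widehat\P_{n-1}(\C)$ before tensoring with $b_j$; this is clear because we tensor over $\Z$ and the factor on the left is already an equivalence class. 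With that observation the induction closes and the theorem follows.
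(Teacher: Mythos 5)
Your proof is correct and follows essentially the same route as the paper's: for $n=2$ one uses $\widehat\nu_2(\widetilde p(\alpha))=\widetilde p_*(\nu(\alpha))=0$, and for $n>2$ one regroups $\widehat\nu_n(\widetilde p(\alpha))=\sum_{j}\widetilde p(\pi_j(\alpha))\otimes\widetilde p(\widetilde a_j)$ and invokes properness of the realization, so each level $n-1$ projection lies in $\widetilde R_{n-1}(\C)$ and hence vanishes in $\widehat\P_{n-1}(\C)$. The only cosmetic difference is that you package the argument as an induction, which is unnecessary since, as you yourself note, the vanishing of $\widetilde R_{n-1}(\C)$ in $\widehat\P_{n-1}(\C)$ holds by definition of the quotient.
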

\begin{proof}
Let $\alpha$ be a differential $\widehat\L_n$ relation and let $\widetilde p$ be a log-point killing the lower levels of $\alpha$. The lift $\widetilde p$ induces a homomorphism $\widetilde p_*\colon\wedge^2(\widetilde S_1)\to\wedge^2(E)$, and the result for $n=2$ now follows from the fact that 
$\widetilde p_*(\delta(\alpha))=\delta(\widetilde p(\alpha))$. For $n>2$, we have
\begin{equation}
\delta(\widetilde p(\alpha))=\sum_{i=1}^Kr_i[(\widetilde p(u_i),\widetilde p(v_i)]\otimes \widetilde p(u_i)=\sum_{j=1}^N\sum_{i=1}^K r_ik_{ji}[(\widetilde p(u_i),\widetilde p(v_i)]\otimes \widetilde p(\widetilde a_j).
\end{equation}
Since $\widetilde p$ kills the lower levels of $\alpha$, $\sum_{i=1}^K r_ik_{ji}[(\widetilde p(u_i),\widetilde p(v_i)]\in \widetilde R_{n-1}(F)$ for all $j$, and the result follows.
\end{proof}

\subsection{A variant using $\widehat F_{\pm}$}\label{sec:PlusMinusVariant} Now assume that $-1\neq 1\in F$.  By analogy with the case $F=\C$, we denote the unique element $e\in E$ with $2e=\iota(1)$ by $\pi i$.
Since realizations in $\widehat F_{\pm}$ depend on a choice of sign determination, we need a notion of when two realizations are equivalent. 
\begin{definition} Let $\mathcal V$ and $\mathcal V'$ be sign determinations and let $\widetilde p$ and $\widetilde q$ be log-points in $X_{K,L,\mathcal V}(F)$ and $X_{K,L,\mathcal V'}(F)$, respectively. We say that $\widetilde p$ and $\widetilde q$ are \emph{sign equivalent} if there is a point $r\in X_{K,L,\mathcal V}$ in the same geometric component as $p$ and a homomorphism $\phi\colon\widetilde S_1\to\Z$ such that $\widetilde q+\pi i\phi$ is a lift of $r$.
\end{definition}

\begin{example}
$[(\Log(x),\Log(1-x))]$ and $[(\Log(-y)-2\pi i,\Log(1-y)+2\pi i)]$ are sign equivalent realizations of $[(\widetilde a_1,\widetilde a_2)]$ in $\Z[\widehat\C_{\pm}]$.
\end{example}


The notion of permissible lifts of zero-degenerate points is the same for points in $\overline{X_{\alpha,\mathcal V}(F)}$ and the notion of proper ambiguity is unchanged. The notion of killing lower levels now involves $\widetilde R_l(F)_{\pm}$. We can thus define $\widetilde R_n(F)_{\pm}$ as in Definition~\ref{def:RntildeDef}, but with $p_0\in X_{\alpha,\mathcal V}(F)$ and $\widetilde p$ sign equivalent to $\widetilde p_0$.
We can now define groups $\widehat\B_k(F)_{\pm}=\Z[\widehat F_{\pm}]/R_k(F)_{\pm}$, which fit in a chain complex $\widehat\Gamma(F,n)_{\pm}$.
The only thing that changes is that only $2\widetilde R_n(F)_{\pm}$ maps to $R_{n}(F)$, and that $\widehat\L_n$ only takes $\widetilde R_n(\C)_{\pm}$ to 0 modulo $\frac{(\pi i)^n}{(n-1)!}$. All proofs are identical to the case of $\widehat F$ (see Remark~\ref{rm:RppAmbiguity}).

\begin{remark}\label{rm:Gon22Lift}
One can show that if $\alpha\in\Z[\widetilde S_1\times\widetilde S_1]$ is Goncharov's 22 term relation, then any log-point $\widetilde p$ kills the lower levels of $2\alpha$. Since $2\alpha$ has proper ambiguity, it follows that $2\widetilde p(\alpha)$ is constant in $H^1(\widehat\Gamma(F,3)_{\pm})$. When $F=\C$ it follows from Theorem~\ref{thm:LnhatEqLnOnBhat} that $\widehat\L_3(\widetilde p(\alpha)))=3\zeta(3)$ modulo $\frac{(\pi i)^3}{4}$. With extra effort one can replace the denominator by $2$.
\end{remark}

%
%

\section{Proof of Theorem~\ref{thm:LnbetaEqZero}}\label{sec:RnEqLnhatRels}
We now prove that $\widehat\L_n(\beta)=0$ if $\beta\in\widetilde R_n(\C)$. This is an immediate consequence of the following result together with Lemma~\ref{lemma:Cauchy}.

\begin{theorem}\label{thm:IndepOfLift} Let $\alpha$ be a differential $\widehat\L_n$ relation and $p$ a point in $X_\alpha(\C)$. If some lift of $p$ kills the lower levels of $\alpha$, then $\widehat\L_n(\widetilde p(\alpha))$ is independent of the choice of lift of $p$ modulo $\frac{(2\pi i)^n}{n!}$. If $\alpha$ has proper ambiguity, then this holds modulo $\frac{(2\pi i)^n}{(n-1)!}$.
\end{theorem}

In order to prove this result we begin with a technical lemma comparing the values of $\widehat\L_n$ at two points with the same image in $\C\setminus\{0,1\}$. Although our main interest is in $\widehat\C$ we formulate it for $\widehat\C_{\pm}$. The reason for the particular way of writing the right-hand side will become clear later.

\begin{lemma}\label{lemma:LhatDifference}  Let $\bar k=2\pi i k$ and $\bar l= 2\pi i l$ with $k,l\in\frac{1}{2}\Z$. We have
\begin{multline}\label{eq:LhatDifference}
\widehat\L_n(u+\bar k,v+\bar l)-\widehat\L_n(u,v)=\sum_{r=1}^{n-2}(-1)^r\frac{\bar k^r}{r!}\widehat\L_{n-r}(u,v)+\frac{(-1)^n}{n!}\Big(A(u,v;\bar k,\bar l)+\sum_{r=0}^{n-3}A_r(u,v;\bar k,\bar l)-
\bar k^{n-1}\bar l\Big),
\end{multline}
modulo $\frac{(\pi i)^n}{(n-1)!}$ where
\begin{equation}
A(u,v;\bar k,\bar l)=(\bar kv-\bar lu)(u+\bar k)^{n-2},\qquad A_r(u,v;\bar k,\bar l)=(\bar kv-\bar lu)\binom{n-2}{r+1}u^r\bar k^{n-2-r}.
\end{equation}
Moreover, if $(u,v)\in\widehat\C_{++}$ and $k,l\in\Z$, then~\eqref{eq:LhatDifference} holds modulo $\frac{(2\pi i)^n}{(n-1)!}$.
\end{lemma}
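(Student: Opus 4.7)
The plan is to prove \eqref{eq:LhatDifference} by a differential–equation argument. Regarding $\bar k, \bar l$ as fixed parameters, both sides are holomorphic in $(u,v)$ on each component of $\widehat\C_{\signs}$, and I would show that their exterior derivatives coincide, so the difference is locally constant. The ambiguity of the LHS lies in $\frac{(\pi i)^n}{(n-1)!}\Z$, and one checks that each term $\frac{\bar k^r}{r!}\widehat\L_{n-r}(u,v)$ on the RHS has ambiguity $\binom{n-1}{r}|k|^r\cdot\frac{(\pi i)^n}{(n-1)!}\Z$, which is also contained in $\frac{(\pi i)^n}{(n-1)!}\Z$; consequently the constant is only determined modulo this subgroup, and I would pin it down by evaluating at a base point on each component.

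For the differential computation, Lemma~5.2 gives
\[
d\bigl[\widehat\L_n(u+\bar k, v+\bar l) - \widehat\L_n(u,v)\bigr] = \omega_n(u+\bar k, v+\bar l) - \omega_n(u,v),
\]
which I would expand using $\omega_n = (-1)^n\tfrac{n-1}{n!}u^{n-2}(u\,dv - v\,du)$ and the binomial theorem. On the RHS, each $\widehat\L_{n-r}(u,v)$ contributes $\omega_{n-r}(u,v)$ (weighted by $(-1)^r\bar k^r/r!$), while $A(u,v;\bar k,\bar l)$, $A_r(u,v;\bar k,\bar l)$, and $-\bar k^{n-1}\bar l$ admit elementary differentials. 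Extracting the coefficient of $u^s\bar k^{n-s-1}$ on $dv$ and pulling out $(-1)^n/n!$, the equality of the two differentials reduces to the identity
\[
\binom{n}{s+1}\,s + \binom{n-1}{s+1} = (n-1)\binom{n-1}{s}, \qquad s = 0,1,\dots,n-2,
\]
which follows immediately from Pascal's rule $\binom{n}{s+1} = \binom{n-1}{s+1} + \binom{n-1}{s}$ combined with $(s+1)\binom{n-1}{s+1} = (n-1-s)\binom{n-1}{s}$. A symmetric computation handles the $du$-coefficient. The roles of $A$, $A_r$ and $-\bar k^{n-1}\bar l$ are precisely to absorb the boundary contributions in the binomial expansion and to kill the unique monomial at $s = n-1$, respectively.

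Once the differentials agree, I would determine the constant on $\widehat\C_{++}$ by evaluation at $(u,v)=(-\log 2, -\log 2)$ (so $z = 1/2$, $p=q=0$), where $\L i_{n-r}(z;q) = \Li_{n-r}(1/2)$ and every quantity on both sides is explicit; the residual discrepancy is then visibly an integer multiple of $\frac{(\pi i)^n}{(n-1)!}$. The $n=2$ case, which can also be verified directly from the formula for $\widehat\L_2$ in Section~\ref{sec:ExtBLochSec}, serves as a sanity check. For the three remaining components I would either repeat the computation at an analogous base point, or transport the identity using the involutions $\widehat\tau$, $\widehat\sigma$ of \eqref{eq:SigmahatTauhat}, which act compatibly on both sides modulo the ambiguity.

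Finally, for the refined statement: if $(u,v)\in\widehat\C_{++}$ and $\bar k,\bar l\in 2\pi i\,\Z$, then $(u+\bar k,v+\bar l)\in\widehat\C_{++}$, on which $\widehat\L_n$ is defined modulo $\frac{(2\pi i)^n}{(n-1)!}\Z$; with $\bar k^r\in(2\pi i)^r\Z$, each coefficient $\frac{\bar k^r}{r!}\widehat\L_{n-r}(u,v)$ on the RHS now has ambiguity in $\binom{n-1}{r}\cdot\frac{(2\pi i)^n}{(n-1)!}\Z\subset \frac{(2\pi i)^n}{(n-1)!}\Z$, so the identity tightens accordingly. The main obstacle is the combinatorial bookkeeping in matching the $dv$- and $du$-coefficients: three distinct sources (from $\omega_{n-r}$, $dA$, and $\sum dA_r$) with slightly different index ranges conspire via Pascal's identity to reproduce the single binomial $(n-1)\binom{n-1}{s}$; the rest of the argument is essentially mechanical.
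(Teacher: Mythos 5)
Your proposal is correct and follows essentially the same route as the paper: show the two sides have equal differentials using $d\widehat\L_m=\omega_m$ (your binomial identity is equivalent to the one arising in the paper's term-by-term comparison of the $du$- and $dv$-coefficients), then fix the locally constant difference by evaluating at one base point per component of $\widehat\C_{\signs}$ for general $\bar k,\bar l$. Two small caveats: at your base point $z=1/2$ the values $\Li_m(1/2)$ are not in closed form for $m\geq 4$, but they cancel between the two sides exactly as the $\Li_m(-1)$ terms do at the paper's base points, so the evaluation still closes; and the suggested transport to the other components via $\widehat\sigma$ should be replaced by direct evaluation there (as the paper does), since no $\sigma$-functional equation for $\widehat\L_n$ is available for general $n$.
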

\begin{proof} Let's define functions $X$, $Y$ and $Z$ of $(u,v)\in\widehat\C_{\pm}$ as follows:\begin{equation}
\begin{gathered}
X=\frac{n!}{(-1)^n}\big(\widehat\L_n(u+\bar k,v+\bar l)-\widehat\L_n(u,v)\big),\quad Y=\frac{n!}{(-1)^n}\big(\sum_{r=1}^{n-2}\frac{(-1)^r}{r!}\bar k^r\widehat\L_{n-r}(u,v)\big)\\
Z=A(u,v;\bar k,\bar l)+\sum_{r=0}^{n-3}A_r(u,v;\bar k,\bar l)-\bar k^{n-1}\bar l
\end{gathered}
\end{equation}
 We must show that $X-Y=Z$. We first show that $dX-dY=dZ$. Using the fact that $d\widehat\L_k=\omega_k$, we obtain
\begin{equation}\label{eq:LHatDiff1}
\begin{aligned}
dX&=(n-1)\Big((u+\bar k)^{n-2}\big((u+\bar k)dv-(v+\bar l)du\big)-u^{n-2}(udv-vdu)\Big)\\
dY&=\sum_{r=1}^{n-2}\bar k^r\binom{n}{r}(n-r-1)u^{r-2}(udv-vdu).
\end{aligned}
\end{equation}
It follows that $dX-dY$ equals
\begin{equation}
\sum_{r=1}^{n-2}\Big(\big((n-1)\binom{n-2}{r}-\binom{n}{r}(n-r-1)\big)u^{n-r-2}\bar k^r\Big)(udv-vdu)+(n-1)(u+\bar k)^{n-2}(\bar kdv-\bar ldu).
\end{equation}
One now shows that $dX-dY=dZ$ by a term by term comparison. 
The coefficient of $dv$ in $dZ$ equals
\begin{equation}\label{eq:usdv}
\bar k(u+\bar k)^{n-2}+\sum_{r=0}^{n-3}\bar k\binom{n-2}{r+1}u^r\bar k^{n-2-r}.
\end{equation}
The coefficient of $u^sdv$ in $dX-dY$ is $n-1$ when $s=0$ and
\begin{equation}
(n-1)\binom{n-2}{n-1-s}-s\binom{n}{n-1-s}+(n-1)\binom{n-2}{s}=\binom{n-2}{s}+\binom{n-2}{s+1},
\end{equation}
when $s\in\{1,\dots,n-1\}$. By~\eqref{eq:usdv} this agrees with the coefficient of $u^sdv$ in $dZ$. A similar consideration comparing coefficients of $du$ completes the proof that $dX-dY=dZ$.

One now need only show that $X-Y-Z=0$ for a single point in each of the 4 components of $\widehat\C_{\pm}$. We choose points 
\begin{equation} (\pi i,\log(2)),\quad (0,\log(2)), \quad  (0,\log(2)+\pi i),\quad  (\pi i,\log(2)+\pi i)
\end{equation}
in $\widehat\C_{++}$, $\widehat\C_{-+}$, $\widehat\C_{--}$, $\widehat\C_{+-}$, respectively.
For $(u,v)=(\pi i,\log(2))$ we have
\begin{equation}
X=\frac{n!}{(-1)^n}\Bigg(\sum_{r=0}^{n-1}\frac{(-1)^r}{r!}\Li_{n-r}(-1)\big((\pi i+\bar k)^r-(\pi i)^r\big)\Bigg)-(\pi i+\bar k)^{n-1}(\log(2)+\bar l)+(\pi i)^{n-1}\log(2)
\end{equation}
and
\begin{equation}
\begin{aligned}
Y&=\frac{n!}{(-1)^n}\sum_{t=1}^{n-2}\frac{(-1)^t}{t!}\bar k^t\Big(\sum_{s=0}^{n-t-1}\frac{(-1)^s}{s!}\Li_{n-t-s}(-1)(\pi i)^s-\frac{(-1)^{n-t}}{(n-t)!}(\pi i)^{n-t-1}\log(2)\Big)\\
&=n\bar k^{n-1}\Li_1(-1)+\frac{n!}{(-1)^n}\Bigg(\sum_{r=1}^{n-1}\big((\pi i+\bar k)^r-(\pi i)^r\big)\frac{(-1)^r}{r!}\Li_{n-r}(-1)\Bigg)+\\&\hspace{2cm}-\sum_{t=1}^{n-2}\binom{n}{t}\bar k^t(\pi i)^{n-t-1}\log(2).
\end{aligned}
\end{equation}
Hence,
\begin{equation}\label{eq:Com1}
\begin{aligned}
X-Y&=(\pi i)^{n-1}\log(2)-(\pi i+\bar k)^{n-1}(\log(2)+\bar l)+\sum_{t=1}^{n-2}\binom{n}{t}\bar k^t(\pi i)^{n-t-1}\log(2)-n\bar k^{n-1}\Li_1(-1)\\
&=\log(2)\Big((\pi i)^{n-1}-(\pi i+\bar k)^{n-1}+\sum_{r=0}^{n-3}\binom{n}{r+2}\bar k^{n-2-r}(\pi i)^{r+1}+n\bar k^{n-1}\Big)-\bar l(\pi i+\bar k)^{n-1}.
\end{aligned}
\end{equation}
Similarly, we obtain
\begin{equation}\label{eq:Com2}
Z=\log(2)\Big(\bar k(\pi i+\bar k)^{n-2}+\sum_{r=0}^{n-3}\bar k \binom{n-2}{r+1}(\pi i)^r\bar k^{n-2-r}\Big)-\bar l(\pi i+\bar k)^{n-1}.
\end{equation}
Letting
\begin{equation}
a_r=\pi i\binom{n-2}{r+2}\bar k^{n-2-r}(\pi i)^r,\qquad b_r=\pi i\binom{n-2}{r+1}\bar k^{n-2-r}(\pi i)^r,
\end{equation}
\eqref{eq:Com1} and~\eqref{eq:Com2} together with the equality $\binom{n}{r+2}=\binom{n-2}{r+2}+2\binom{n-2}{r+1}+\binom{n-2}{r}$ imply that we have 
\begin{equation}
\begin{aligned}
X-Y-Z&=\log(2)\Big(n\bar k^{n-1}+\sum_{r=0}^{n-2}(a_{r}-a_{r-1})+2\sum_{r=0}^{n-2}(b_r-b_{r-1})\Big)\\&=\log(2)\Big(n\bar k^{n-1}-a_{-1}-2b_{-1}\Big)=0.
\end{aligned}
\end{equation}
This concludes the proof when $(u,v)=(\pi i,\log (2))\in\C_{++}$. The computations for the other 3 points are similar (and much simpler for $(0,\log(2))$ for $(0,\log(2)+\pi i)$).
\end{proof}

\begin{proof}[Proof of Theorem~\ref{thm:IndepOfLift}]
Now suppose $\alpha$ is a differential $\widehat\L_n$ relation with proper ambiguity and that $\widetilde p$ kills the lower levels of $\alpha$. 
For $j\in\{1,\dots,N\}$ let $T_j(\widetilde p)$ be the lift obtained from $\widetilde p$ by adding $2\pi i$ to $\widetilde a_j$. It is enough to prove that $\widehat\L_n(T_j(\widetilde p)(\alpha))=\widehat\L_n(\widetilde p(\alpha))$ for all $j$.
For an integer $k$, let $\bar k=2\pi i k$. By Lemma~\ref{lemma:LhatDifference}, we have
\begin{multline}
\widehat\L_n(T_j(\widetilde p)(\alpha))-\widehat\L_n(\widetilde p(\alpha))=\sum_{i=1}^M r_i\Bigg(\sum_{r=1}^{n-2}(-1)^r \frac{\bar k_{ji}^r}{r!}\widehat\L_{n-r}(\widetilde p(u_i),\widetilde p(v_i))\Bigg)+\\\frac{(-1)^n}{n!}\sum_{i=1}^M r_iA\Big(\widetilde p(u_i),\widetilde p(v_i);\bar k_{ji},\bar l_{ji}\Big)+\frac{(-1)^n}{n!}\sum_{i=1}^Mr_i \Bigg(\sum_{r=0}^{n-3} A_r\Big(\widetilde p(u_i),\widetilde p(v_i);\bar k_{ji},\bar l_{ji}\Big) -\\\frac{(-1)^n}{n!}\sum_{i=1}^M r_i (\bar k_{ij})^{n-1}(\bar l_{ij}).
\end{multline}
The first sum vanishes since $\widetilde p$ kills the lower levels of $\alpha$ (consider $J=\{j,\dots,j\}$). As we shall see, the second and third sum vanish since $\alpha$ is a differential $\widehat\L_n$ relation, and the last term vanishes by the (second) proper ambiguity condition. To see this consider the polynomial ring $\widetilde S[\widetilde a_{2\pi i}]$ obtained from $\widetilde S$ by adjoining a variable $\widetilde a_{2\pi i}$, which we think of as a symbolic representation of $2\pi i\in\C$. The homomorphism $\widetilde p\colon\widetilde S_1\to\C$ extends canonically to $\widetilde S[\widetilde a_{2\pi i}]_1$ by taking $\widetilde a_{2\pi i}$ to $2\pi i$. Note that $T_j(\widetilde p)(\alpha)=\widetilde p(T_j(\alpha))$, where $T_j\colon\widetilde S_1\to \widetilde S[\widetilde a_{2\pi i}]_1$ is the homomorphism taking $\widetilde a_j$ to $\widetilde a_j+\widetilde a_{2\pi i}$ and fixing all other generators. We have homomorphisms
\begin{equation}
\begin{aligned}
\chi&\colon\wedge^2(\widetilde S[\widetilde a_{2\pi i}]_1)\to\widetilde S_1, & (u+p\widetilde a_{2\pi i})\wedge (v+q\widetilde a_{2\pi i})\mapsto pv-qu\\
\wedge^2\Sym^{n-2}&\colon\Z[\widetilde S_1\times\widetilde S_1]\to\wedge^2(\widetilde S_1)\otimes\widetilde S_{n-2},& (u,v)\mapsto (u\wedge v)\otimes u^{n-2}
\end{aligned}
\end{equation}
as well as projection homomorphisms
\begin{equation}\Pi_k\colon \widetilde S[\widetilde a_{2\pi i}]_{n-2}\to\widetilde S_k
\end{equation} defined by taking a monomial $x$ to $x/\widetilde a_\pi^{n-2-k}$ if $x$ is divisible by $\widetilde a_\pi$ $n-2-k$ times and 0 otherwise.

Letting $m\colon\widetilde S[\widetilde a_{2\pi i}]_k\otimes \widetilde S[\widetilde a_{2\pi i}]_l\to \widetilde S[\widetilde a_{2\pi i}]_{k+l}$ be the multiplication map, the definition of the maps imply that
\begin{equation}\label{eq:ATerm}
m\circ(\chi\otimes\id)\circ T_{j*}(\wedge^2\Sym^{n-2}(\alpha))=\sum_{i=1}^{K}r_i(k_{ji}v_i-l_{ji}u_i)(u_i+k_{ji}\widetilde a_\pi)^{n-2}\in\widetilde S[\widetilde a_{2\pi i}]_{n-1}.
\end{equation}
By multiplication, $\widetilde p$ induces a homomorphism $\widetilde p\colon \widetilde S[\widetilde a_{2\pi i}]_{n-1}\to\C$, which takes the right-hand side of~\eqref{eq:ATerm} to $\sum_{i=1}^M r_iA\Big(\widetilde p(u_i),\widetilde p(v_i);\bar k_{ji},\bar l_{ji}\Big)$. Since $\alpha$ is a a differential $\widehat\L_n$ relation, the left-hand side of~\eqref{eq:ATerm} is $0$ since $\wedge^2\Sym^{n-2}(2\alpha)=0$. This proves the vanishing of the second sum.
The vanishing of the third sum is proved similarly, using that 
\begin{equation}
m\circ(\chi\otimes\id)\circ T_{j*}(\wedge^2\Sym^{n-2}(\alpha))=\sum_{i=1}^{K}r_i(k_{ji}v_i-l_{ji}u_i)(u_i+k_{ji}\widetilde a_\pi)^{n-2}\in\widetilde S[\pi]_{n-1},
\end{equation}
which holds for any $r$. 
Finally, the vanishing of the last sum follows from the second proper ambiguity condition. This concludes the proof of Theorem~\ref{thm:IndepOfLift}.
\end{proof}

\begin{remark}\label{rm:RppAmbiguity}
The same argument shows that $\widehat\L_n(\beta)=0$ modulo $\frac{(\pi i)^n}{(n-1)!}$ if $\beta$ is in the subgroup $\widetilde R(\C)_{\pm}$ defined in Section~\ref{sec:PlusMinusVariant}. The only difference is that adding a half integral multiple of $2\pi i$ to $\widetilde a_j$ changes the lift by a sign equivalence.
\end{remark}


\section{Comparing $\widehat\L_n$ and $\L_n$}\label{sec:Comparison}
We now prove Theorem~\ref{thm:CompWithLnStatement} and Theorem~\ref{thm:LnhatEqLnOnBhat}. Recall that
\begin{equation}\label{eq:LnWithBeta}
\L_n(z)=\mathfrak R_n\Big(\sum_{r=0}^{n-1}\beta_r\Li_{n-r}(z)\Log(\vert z\vert)\Big),\qquad \beta_r=\frac{2^r}{r!}B_r.
\end{equation}
Define signs
\begin{equation}
\eta_j=\begin{cases}(-1)^{\frac{j(j-1)}{2}}&n\text{ even}\\(-1)^{\frac{j(j+1)}{2}}&n\text{ odd}\end{cases},\qquad \epsilon_j=\begin{cases}(-1)^{\frac{j}{2}}&j\text{ even}\\(-1)^{\frac{j+1}{2}}&j\text{ odd}.\end{cases}
\end{equation}
Let $\mathbf{1}_{\even}$ and $\mathbf{1}_{\odd}$ denote the characteristic functions for the even and odd numbers respectively. For non-negative integers $i$ and $j$ let
\begin{equation}\label{eq:cijdij}c_i=(1-2^{1-i})\beta_i,\qquad c_{i,j}=\frac{c_i}{j!}\eta_j,\quad d_{i,j}=(-1)^{\frac{i+2}{2}}\epsilon_n\sum_{r=0}^i\frac{c_r}{(i+j+2-r)!}\mathbf{1}_{\even}(i).
\end{equation}
In particular, $c_0=-1$. Note that up to a sign, the $c_{i,j}$ and $d_{i,j}$ are independent of $n$, and 0 when $i$ is odd.
We wish to prove that
\begin{multline}\label{eq:CompWithLnProofSection}
\mathfrak R_n(\widehat\L_n(u,v))-\L_n(z)=\sum_{s=1}^{n-2}\bigg(\mathfrak R_{n-s}(\widehat\L_{n-s}(u,v))\sum_{i=0}^s c_{i,s-i}\Real(u)^i\Imag(u)^j\bigg)\\+\det(u\wedge v)\sum_{i=0}^{n-2}d_{i,n-2-i}\Real(u)^i\Imag(u)^j,
\end{multline}
where $z=r(u,v)\in\C\setminus\{0,1\}$. We do this directly by expanding both sides of~\eqref{eq:CompWithLnProofSection} and comparing terms.
For notational simplicity let $\Li_r(z)=x_r+iy_r$ and $\Log(z)=a+bi$.  We shall only compare the terms involving $x_m$ and leave the analogous comparison of $y_m$ terms and terms not involving any $x_m$ or $y_m$ to the reader. Letting $\Coeff_{LHS}(x_m)$ and $\Coeff_{RHS}(x_m)$ denote the coefficients of $x_m$ when expanding the lefthand, respectively, righthand side of\eqref{eq:CompWithLnProofSection}, we thus wish to prove that $\Coeff_{LHS}(x_m)=\Coeff_{RHS}(x_m)$ for all $m$. We assume for notational simplicity that $(u,v)=(a+bi+2p\pi i,-x_1-iy_1+2q\pi i)\in\widehat\C_{++}$.

\subsection{The lefthand side}
As a simple consequence of the formula~\eqref{eq:LnhatIntro} we have
\begin{multline}\label{eq:LHS}
\widehat\L_n(u,v)=
\sum_{r=0}^{n-1}\frac{1}{r!}(-1)^rx_{n-r}\big(a+(b+2p\pi )i\big)^r+i\sum_{r=0}^{n-1}\frac{1}{r!}(-1)^ry_{n-r}\big(a+(b+2p\pi )i\big)^r\\
-(-1)^n\frac{n-1}{n!}x_1\big(a+(b+2p\pi)i\big)^{n-1}-i(-1)^n\frac{n-1}{n!}y_1\big(a+(b+2p\pi)i\big)^{n-1}\\-\frac{2q\pi i}{(n-1)!}(-2p\pi i)^{n-1}+\frac{2q\pi i}{n!}(-1)^{n-1}\big(a+(b+2p\pi)i\big)^{n-1}.
\end{multline}
Using~\eqref{eq:LHS} and \eqref{eq:LnWithBeta} we see that
\begin{equation}\label{eq:DiffZagierCoeffs}
\begin{aligned}
\Coeff_{LHS}(x_m)&=\frac{(-1)^{n-m}}{(n-m)!}\mathfrak R_n\Big((a+(b+2p\pi)i)^{n-m})\Big)-a^{n-m}\beta_{n-m}\mathbf{1}_{\odd}(n)\quad\text{for }1<m\leq n,\\
\Coeff_{LHS}(x_1)&= (-1)^{n-1}\frac{n-1}{n!}\mathfrak R_n\Big((a+(b+2p\pi)i)^{n-1})\Big)-a^{n-1}\beta_{n-1}\mathbf{1}_{\odd}(n).
\end{aligned}
\end{equation}

\begin{lemma}\label{lemma:Rn} We have 
\begin{equation}\mathfrak R_s((a+(b+2p\pi)i)^{s-m})=\sum_{k+l=s-m}\binom{s-m}{k}(-1)^{\frac{m+k+1}{2}}\epsilon_s\mathbf{1}_{\odd}(m+k)a^k(b+2p\pi)^l.
\end{equation}
\end{lemma}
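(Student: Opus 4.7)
The plan is a direct computation using the binomial theorem, after which the identity reduces to a parity bookkeeping check.

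First I would expand
\[
(a+(b+2p\pi)i)^{s-m}=\sum_{k+l=s-m}\binom{s-m}{k}a^k(b+2p\pi)^l\,i^l,
\]
so that $\mathfrak R_s$ acts term by term on $i^l$. By definition $\mathfrak R_s=\Real$ if $s$ is odd and $\mathfrak R_s=\Imag$ if $s$ is even. Since $\Real(i^l)$ vanishes for $l$ odd and equals $(-1)^{l/2}$ for $l$ even, while $\Imag(i^l)$ vanishes for $l$ even and equals $(-1)^{(l-1)/2}$ for $l$ odd, only one parity class of $l$ contributes in each case.

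Next I would record, using $l=s-m-k$, that the surviving terms in both cases are exactly those with $m+k$ odd, which produces the indicator $\mathbf{1}_{\odd}(m+k)$. It remains to check that the nonzero sign $\Real(i^l)$ or $\Imag(i^l)$ agrees with $(-1)^{(m+k+1)/2}\epsilon_s$. Writing $m+k=2j+1$, a small case analysis handles this:
\begin{itemize}
\item If $s=2t+1$ is odd, then $l=2(t-j)$, so $\Real(i^l)=(-1)^{t-j}=(-1)^{t+j}$, whereas $(-1)^{(m+k+1)/2}\epsilon_s=(-1)^{j+1}(-1)^{t+1}=(-1)^{t+j}$.
\item If $s=2t$ is even, then $l=2(t-j)-1$, so $\Imag(i^l)=(-1)^{t-j-1}=(-1)^{t+j+1}$, whereas $(-1)^{(m+k+1)/2}\epsilon_s=(-1)^{j+1}(-1)^{t}=(-1)^{t+j+1}$.
\end{itemize}
In both cases the signs match, which yields the claimed formula.

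There is no real obstacle here; the only thing to be careful about is the two-case definition of $\epsilon_s$ (and similarly the fact that $\mathfrak R_s$ has two different meanings depending on the parity of $s$), so the sign verification must be carried out separately for $s$ even and $s$ odd.
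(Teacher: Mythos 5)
Your proof is correct, and it follows the same route the paper intends: the paper simply asserts that the lemma is ``an elementary consequence of the binomial theorem,'' which is exactly the expansion-plus-parity check you carry out. Your sign verification in the two parity cases of $s$ matches the definitions of $\mathfrak R_s$ and $\epsilon_s$, so nothing further is needed.
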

\begin{proof} This is an elementary consequence of the binomial theorem.
\end{proof}
It thus follows that $\Coeff_{LHS}(x_m)$ can be written as sums of terms of the form $a^k(b+2p\pi)^l$ where $k+l+m=n$. Let $\Coeff_{LHS}(x_m,k,l)$ denote the coefficient of $a^k(b+2p\pi)^l$ in $\Coeff_{LHS}(x_m)$. By Lemma~\ref{lemma:Rn} it follows from~\eqref{eq:DiffZagierCoeffs} that
\begin{equation}\label{eq:CLHSklm>1}
\Coeff_{LHS}(x_m,k,l)=\begin{cases}
\frac{(-1)^{n-m}}{k!l!}(-1)^{\frac{m+k+1}{2}}\epsilon_n\mathbf{1}_{\odd}(m+k)&\text{ for $l>0$}\\
\frac{(-1)^{n-m}}{k!l!}(-1)^{\frac{m+k+1}{2}}\epsilon_n\mathbf{1}_{\odd}(m+k)-\mathbf{1}_{\odd}(n)\beta_k&\text{ for $l=0$}
\end{cases}
\end{equation}
for $m>1$ and that
\begin{equation}\label{eq:CLHSklmEq1}
\Coeff_{LHS}(x_1,k,l)\begin{cases}
\frac{(-1)^{n-1}}{k!l!}\frac{n-1}{n}(-1)^{\frac{k+2}{2}}\epsilon_n\mathbf{1}_{\even}(k)&\text{ for $l>0$}\\
\frac{(-1)^{n-1}}{k!l!}\frac{n-1}{n}(-1)^{\frac{k+2}{2}}\epsilon_n\mathbf{1}_{\even}(k)-\mathbf{1}_{\odd}(n)\beta_k&\text{ for $l=0$.}
\end{cases}
\end{equation}

\subsection{The righthand side}
We shall need the following technical lemmas
\begin{lemma}\label{lemma:BinomLemma} For any non-negative integers $l$ and $s$ we have
\begin{equation}
\sum_{j=0}^l(-1)^j\binom{l}{j}=0 \text{ for }l>0,\qquad \sum_{j=0}^l\frac{(-1)^j\binom{l}{j}}{s+l-j}=\frac{(-1)^l}{s\binom{l+s}{l}}.
\end{equation}
\end{lemma}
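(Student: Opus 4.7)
The plan is as follows. The first identity is immediate from the binomial theorem applied to $(1-1)^l = 0$ when $l>0$, so the only real content is the second identity, which I treat by two equivalent routes.

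The cleanest route I would take is partial fractions. The rational function
\begin{equation}
\frac{l!}{s(s+1)(s+2)\cdots(s+l)}
\end{equation}
has simple poles at $s=-j$ for $j=0,1,\dots,l$. The residue at $s=-j$ equals
\begin{equation}
\frac{l!}{\prod_{i=0,\,i\neq j}^{l}(i-j)}=\frac{l!}{(-1)^{j}\,j!\,(l-j)!}=(-1)^{j}\binom{l}{j},
\end{equation}
so the partial fraction expansion reads
\begin{equation}
\frac{l!}{s(s+1)\cdots(s+l)}=\sum_{j=0}^{l}\frac{(-1)^{j}\binom{l}{j}}{s+j}.
\end{equation}
Reindexing $j\mapsto l-j$ and using $\binom{l}{l-j}=\binom{l}{j}$ transforms the right hand side into $(-1)^{l}\sum_{j=0}^{l}(-1)^{j}\binom{l}{j}/(s+l-j)$. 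Rewriting $s(s+1)\cdots(s+l)=(s+l)!/(s-1)!$ and comparing with $s\binom{l+s}{l}=(s+l)!/(l!(s-1)!)$ yields the claimed formula.

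As a backup (or sanity check) I would use the Beta integral. From $\frac{1}{s+l-j}=\int_{0}^{1}t^{s+l-j-1}\,dt$ and the binomial theorem $\sum_{j=0}^{l}(-1)^{j}\binom{l}{j}t^{l-j}=(t-1)^{l}$,
\begin{equation}
\sum_{j=0}^{l}\frac{(-1)^{j}\binom{l}{j}}{s+l-j}=\int_{0}^{1}t^{s-1}(t-1)^{l}\,dt=(-1)^{l}\int_{0}^{1}t^{s-1}(1-t)^{l}\,dt=(-1)^{l}B(s,l+1),
\end{equation}
and $B(s,l+1)=(s-1)!\,l!/(s+l)!=1/(s\binom{l+s}{l})$, giving the identity.

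There is essentially no obstacle here; both approaches are textbook. The only mild care needed is treating the degenerate case $s=0$ (where the statement must be read via the natural limit, or one simply assumes $s\geq 1$ as is evident from the denominator in the right hand side), and ensuring the residue/Beta computations are consistent with the conventions for $0!=1$.
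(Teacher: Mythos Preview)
Your proof is correct. Both the partial-fraction argument and the Beta-integral argument are valid and standard; the residue computation, the reindexing $j\mapsto l-j$, and the identification $s(s+1)\cdots(s+l)=l!\cdot s\binom{l+s}{l}$ are all fine.

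The paper itself does not give a proof of this lemma: it simply declares the first identity elementary and cites a reference (\cite{BinomialIdentities}) for the second. So your write-up actually supplies content that the paper omits. Of your two routes, the partial-fraction one is preferable in this context because it is purely algebraic and works for any value of $s$ for which both sides are defined (as an identity of rational functions in $s$), whereas the Beta-integral version tacitly needs $s\geq 1$ for the integrals to converge and for the factorial manipulations to be literal. Your remark about $s=0$ is apt: the lemma as stated includes $s=0$ among the ``non-negative integers,'' but both sides are then undefined (the $j=l$ term on the left and the factor $s$ on the right), so one should read the hypothesis as $s\geq 1$, which is indeed how the lemma is applied later in Section~\ref{sec:Comparison}.
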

\begin{proof}
The first is elementary and the second can be found in \cite[eq.~(5)]{BinomialIdentities}.
\end{proof}
\begin{lemma}\label{lemma:BetaLemma} Let $k$ and $l$ be non-negative integers with $l>1$ odd.
\begin{equation}
\sum_{i=0}^k\frac{c_i}{(k-i)!}=(-1)^{k-1}\beta_k,\qquad \sum_{i=0}^{l-1}\frac{l-1-i}{(l-i)!}c_i=-\beta_{l-1}
\end{equation}
\end{lemma}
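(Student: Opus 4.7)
The plan is to attack both identities via the generating function
\begin{equation}
C(t):=\sum_{i=0}^\infty c_i\, t^i,
\end{equation}
which I will show equals $-2t/(e^t-e^{-t})$. Indeed, the usual Bernoulli generating series $\sum_n B_n t^n/n! = t/(e^t-1)$ gives $\sum_i \beta_i t^i = 2t/(e^{2t}-1)$, and since $c_i = \beta_i - 2\cdot (2^i B_i/i!)\big|_{i\mapsto i}$\,—\,more precisely $c_i = \frac{2^i-2}{i!}B_i$\,—\,one computes
\begin{equation}
C(t)=\frac{2t}{e^{2t}-1}-\frac{2t}{e^t-1}=\frac{-2te^t}{e^{2t}-1}=\frac{-2t}{e^t-e^{-t}}.
\end{equation}

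For identity (i), the convolution $\sum_{i=0}^k c_i/(k-i)!$ is the coefficient of $t^k$ in $C(t)\,e^t$. But
\begin{equation}
C(t)e^t=\frac{-2te^{2t}}{e^{2t}-1}=\frac{2t}{e^{-2t}-1}=\sum_{k=0}^\infty \beta_k(-1)^{k-1}t^k,
\end{equation}
where the last equality uses $\sum_k \beta_k(-t)^k=-2t/(e^{-2t}-1)$. Comparing coefficients yields $\sum_{i=0}^k c_i/(k-i)! = (-1)^{k-1}\beta_k$.

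For identity (ii), I will use the telescoping identity $\frac{l-1-i}{(l-i)!}=\frac{1}{(l-1-i)!}-\frac{1}{(l-i)!}$ to split
\begin{equation}
\sum_{i=0}^{l-1}\frac{l-1-i}{(l-i)!}c_i=\sum_{i=0}^{l-1}\frac{c_i}{(l-1-i)!}-\sum_{i=0}^{l-1}\frac{c_i}{(l-i)!}.
\end{equation}
The first sum equals $(-1)^{l-2}\beta_{l-1}=-\beta_{l-1}$ (since $l$ is odd) by applying (i) with $k=l-1$. The second sum equals $\left(\sum_{i=0}^l c_i/(l-i)!\right)-c_l=(-1)^{l-1}\beta_l-c_l=\beta_l-c_l$ by (i) with $k=l$. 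Since $l>1$ is odd, $B_l=0$ forces $\beta_l=c_l=0$, so the second sum vanishes and the claim follows.

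The main (minor) obstacle is just bookkeeping of signs in the generating function computation; once $C(t)=-2t/(e^t-e^{-t})$ is verified, both identities drop out, with (ii) being a purely formal telescoping consequence of (i) together with the vanishing of odd Bernoulli numbers above $1$.
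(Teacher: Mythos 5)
Your proof is correct. For the first identity you do exactly what the paper does: you form the generating function $C(t)=\sum_i c_i t^i=f(t)-2f(t/2)=\frac{-2te^{t}}{e^{2t}-1}$ (with $f(t)=\frac{2t}{e^{2t}-1}$ the generating function of the $\beta_i$) and read off the convolution identity from $e^{t}C(t)=-f(-t)$; all your sign bookkeeping checks out. For the second identity you diverge from the paper: the paper verifies a second analytic identity, $\bigl(\cosh(x)-\tfrac{\sinh(x)}{x}\bigr)C(x)=1-x-f(x)$, and extracts the coefficient of $x^{l-1}$, whereas you avoid any further generating-function work by writing $\frac{l-1-i}{(l-i)!}=\frac{1}{(l-1-i)!}-\frac{1}{(l-i)!}$, applying the first identity with $k=l-1$ and $k=l$, and using that $\beta_l=c_l=0$ for odd $l>1$ (vanishing of odd Bernoulli numbers). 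Your route is more elementary and makes the logical dependence transparent — the second identity is a formal consequence of the first — at the cost of invoking $B_l=0$ explicitly; the paper's route keeps both identities on the same footing as coefficient identities of explicit functions but requires checking the extra $\cosh$/$\sinh$ identity (and implicitly also uses that $c_i$ vanishes for odd $i$ to match the even-degree series). Either argument is complete.
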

\begin{proof}
Since $\sum_{r=0}^\infty \frac{B_r}{r!}x^r=\frac{x}{e^x-1}$ it follows that the generating function for $\beta_i$ is the function $f(x)=\frac{2x}{e^{2x}-1}$. The first equation now follows from the fact that $e^x(f(x)-2f(x/2))=-f(-x)$ and the second from the fact that $(\cosh(x)-\frac{\sinh(x)}{x})(f(x)-2f(x/2))=1-x-f(x)$.
\end{proof}

Since $(u,v)=(a+bi+2p\pi i,-x_1-iy_1+2q\pi i)$ it follows that 
\begin{equation}
\det(u\wedge v)=x_1(b+2p\pi)-y_1a+2\pi q a.
\end{equation}
The right hand side of~\eqref{eq:CompWithLnProofSection} expands to
\begin{equation}\sum_{s=2}^{n-1}\Big(\mathfrak R_{s}(\widehat\L_{s})\sum_{i+j=n-s} c_{i,j}a^i(b+2p\pi)^{j}\Big)+
(x_1(b+2p\pi)-y_1a+2\pi q a)\sum_{i+j=n-2} d_{i,j}a^i(b+2p\pi)^{j}
\end{equation}
and it follows from~\eqref{eq:LHS} that we have 
\begin{equation}\label{eq:CoeffRHS}
\begin{aligned}
\Coeff_{RHS}(x_m)&=\sum_{s=m}^{n-1}\bigg(\frac{(-1)^{s-m}}{(s-m)!}\mathfrak R_{s}\Big((a+(b+2p\pi)i)^{s-m})\Big)\sum_{i+j=n-s} c_{i,j}a^i(b+2p\pi)^{j}\bigg)\\
\Coeff_{RHS}(x_1)&=\sum_{s=2}^{n-1}\bigg((-1)^{s-1}\frac{s-1}{s!}\mathfrak R_{s}\Big((a+(b+2p\pi)i)^{s-1})\Big)\sum_{i+j=n-s} c_{i,j}a^i(b+2p\pi)^{j}\bigg)\\&\qquad+(b+2p\pi)\sum_{i+j=n-2} d_{i,j}a^i(b+2p\pi)^{j}.
\end{aligned}
\end{equation}
We can thus define $\Coeff_{RHS}(x_m,k,l)$ for $k+l+m=n$ as above. Let's first assume that $m>1$. By Lemma~\ref{lemma:Rn} $\Coeff_{RHS}(x_m,k,l)$ is given by
\begin{equation}
\begin{aligned}
\sum_{i=0}^k\sum_{\substack{j=0\\(i,j)\neq(0,0)}}^l\frac{(-1)^{n-i-j-m}}{(n-i-j-m)!}\binom{n-i-j-m}{k-i}(-1)^{\frac{m+k-i+1}{2}}\epsilon_{n-i-j}\mathbf{1}_{\odd}(m+k-i)c_{i,j}\\
=(-1)^{n-m}(-1)^{\frac{m+k+1}{2}}\epsilon_{n}\mathbf{1}_{\odd}(m+k)\bigg(\sum_{i=0}^k\sum_{j=0}^l\frac{c_i(-1)^{-j}\binom{l}{j}}{(k-i)!l!}-\frac{c_0}{k!l!}\bigg),
\end{aligned}
\end{equation}
which follows from the fact that $(-1)^{\frac{i}{2}}\epsilon_{n-i-j}=\epsilon_{n-j}=\epsilon_n\eta_j$ whenever $i$ is even. By Lemma~\ref{lemma:BinomLemma} and Lemma~\ref{lemma:BetaLemma} it follows that this agrees with~\eqref{eq:CLHSklm>1}. We have thus proved that $\Coeff_{LHS}(x_m)=\Coeff_{RHS}(x_m)$ for $m>1$. 

Now let $m=1$ and suppose $l>0$. By \eqref{eq:CoeffRHS} we see that $\Coeff(x_1,k,l)$ equals
\begin{equation}
\begin{aligned}
\MoveEqLeft[1]d_{k,l-1}+\sum_{\substack{i=0\\i\text{ even}}}^k\sum_{\substack{j=0\\(i,j)\neq(0,0)}}^l\frac{c_{i,j}(-1)^{n-j-1}(n-i-j-1)(-1)^{\frac{k+2}{2}}\epsilon_{n-j}\mathbf{1}_{\odd}(1+k)}{(n-i-j)(k-i)!(l-j)!}\\
&=d_{k,l-1}+(-1)^{n-1}(-1)^{\frac{k+2}{2}}\epsilon_{n}\mathbf{1}_{\even}(k)\bigg(\sum_{\substack{i=0\\i\text{ even}}}^k\sum_{j=0}^l\frac{c_i(-1)^{-j}(n-i-j-1)\binom{l}{j}}{(n-i-j)(k-i)!l!}-\frac{c_0(n-1)}{nk!l!}\bigg)\\
&=d_{k,l-1}+(-1)^{n-1}(-1)^{\frac{k+2}{2}}\epsilon_{n}\mathbf{1}_{\even}(k)\bigg(-\sum_{\substack{i=0\\i\text{ even}}}^k\sum_{j=0}^l\frac{c_i(-1)^{-j}\binom{l}{j}}{(n-i-j)(k-i)!l!}-\frac{c_0(n-1)}{nk!l!}\bigg)\\
&=d_{k,l-1}-(-1)^{n-1}(-1)^{\frac{k+2}{2}}\epsilon_{n}\mathbf{1}_{\even}(k)(-1)^l\sum_{\substack{i=0\\i\text{ even}}}^k\frac{c_i}{(n-i)!}+(-1)^{n-1}(-1)^{\frac{k+2}{2}}\epsilon_{n}\mathbf{1}_{\even}(k)\\
&=(-1)^{n-1}(-1)^{\frac{k+2}{2}}\epsilon_{n}\mathbf{1}_{\even}(k),
\end{aligned}
\end{equation}
where the second last equality follows from Lemma~\ref{lemma:BinomLemma}.
The fact that this equals~\eqref{eq:CLHSklmEq1} follows from Lemma~\ref{lemma:BetaLemma}.

Finally, when $l=0$ (so that $k=n-1$) a similar computation shows that $\Coeff_{RHS}(x_1,k,0)$ is given by
\begin{equation}
(-1)^{n-1}(-1)^{\frac{k+2}{2}}\epsilon_{n}\mathbf{1}_{\even}(k)\bigg(\sum_{\substack{i=0\\i\text{ even}}}^{n-1}\frac{n-1-i}{(n-i)!}c_i-\frac{c_0(n-1)}{nk!l!}\bigg).
\end{equation}
The fact that this agrees with \eqref{eq:CLHSklmEq1} follows from Lemma~\ref{lemma:BetaLemma}. This concludes the proof of Theorem~\ref{thm:CompWithLnStatement}.

\subsection{Proof of Theorem~\ref{thm:LnhatEqLnOnBhat}}
For $s=1,\dots,n-2$, let 
\begin{equation}\Psi_s\colon\Z[\widehat\C]\to\widehat\B_{n-s}(\widehat\C)\otimes \C^{\otimes s},\qquad [(u,v)]\mapsto [(u,v)]\otimes u^{\otimes s}.
\end{equation}
We note that 
\begin{equation}\label{eq:PsiFormula}
\Psi_s=\overbrace{(\delta\otimes \id)\circ\dots\circ(\delta\otimes \id)}^{s-1}\circ\delta.
\end{equation}
Also, let (for $i=0,\dots,s$)
\begin{equation}
\ReIm_i\colon\C^{\otimes s}\to\R,\qquad z_1\otimes\dots\otimes z_s\mapsto \prod_{k=1}^i\Real(z_k)\prod_{k=i+1}^s\Imag(z_k).
\end{equation}
Letting $c_{i,j}$ and $d_{i,j}$ be as above, define
\begin{equation}
C_s=\sum_{i=0}^sc_{i,s-i}\ReIm_i\colon\C^{\otimes s}\to\R,\qquad D=\sum_{i=0}^{n-2}d_{i,n-2-i}\ReIm_i\colon\C^{\otimes n-2}\to\R.
\end{equation}
Let
\begin{equation}
\Delta\colon\widehat\B_n(\widehat\C)\to\R,\qquad \alpha\mapsto\mathfrak R_n\circ\widehat\L_n(\alpha)-\L_n\circ r(\alpha).
\end{equation}
It then follows from Theorem~\ref{thm:CompWithLnStatement} that (where $m\colon\R\otimes\R\to\R$ is multiplication)
\begin{equation}
\Delta=m\circ \Big(\sum_{s=1}^{n-2}(\mathfrak R_{n-s}\circ\widehat\L_{n-s})\otimes C_s)\circ\Psi_s+(\det\circ D)\circ\Psi_{n-2}\Big).
\end{equation}
By~\eqref{eq:PsiFormula} this vanishes on $\Ker(\delta)$, and the result follows.

\section{A lift of Goncharov's regulator}\label{sec:LiftOfReg}
We begin with a review of Goncharov's results~\cite{GoncharovMotivicGalois} (revised in~\cite{GoncharovDeninger}; see also \cite{GoncharovRegulators,GoncharovArakelov}).
\subsection{Goncharov's regulator}\label{sec:ResultsGonReg}
For positive integers $2\leq p<q$, let $\widetilde{\Gr}(p,q)$ denote the affine cone over the Grassmannian of $p$-planes in $q$-space.  An element can be represented by a $p\times q$ matrix defined up to the action by $\SL(p)$ and we thus have an action of $S_q$ (the symmetric group on $q$ letters) on $\widetilde{\Gr}(p,q)$ obtained by permuting the columns of a representing matrix. 
For any $p$-element subset $I$ of $\{1,\dots,q\}$ we have a \emph{Pl\"ucker coordinate} $a_I$ defined as the $p\times p$ minor determined by $I$.
Let $\widetilde{\Gr}(p,q)^*$ denote the points where all Pl\"ucker coordinates are non-zero.
Goncharov showed that there is a commutative diagram
\begin{equation}\label{eq:GonDiagram}
\cxymatrix{{{\Z[\widetilde{\Gr}(3,7)^*(\C)]}\ar[r]^-{\partial}\ar[d]&{\Z[\widetilde{\Gr}(3,6)^*(\C)]}\ar[r]^-{\partial}\ar[d]^-{g_5}&{\Z[\widetilde{\Gr}(3,5)^*(\C)]}\ar[r]^-{\partial}\ar[d]^-{g_4}&{\Z[\widetilde{\Gr}(3,4)^*(\C)]}\ar[d]^-{g_3}\\0\ar[r]&\B_3(\C)_\Q\ar[r]^-{\delta}&(\B_2(\C)\otimes\C^*)_\Q\ar[r]^-{\delta}&\wedge^3(\C^*)_\Q}}
\end{equation}
where the boundary maps $\partial$ are the simplicial ones, and where
\begin{equation}
\begin{gathered}
g_3=\frac{1}{6}\Alt_4(a_{134}\wedge a_{124}\wedge a_{123}),\\
g_4=\frac{1}{12}\Alt_5\Big(\big[r(\overline v_1|\overline v_2,\overline v_3,\overline v_4,\overline v_5)\big]\otimes a_{123}\Big),\\
g_5=\frac{1}{90}\Alt_6([\frac{a_{124}a_{235}a_{136}}{a_{125}a_{236}a_{134}}\big]).
\end{gathered}
\end{equation}
Here, a quadruple of points in $(x_1,x_2,x_3,x_4)\in P^1_\C=\C\cup\{\infty\}$ has a \emph{cross-ratio} $\frac{(x_1-x_3)(x_2-x_4)}{(x_1-x_4)(x_2-x_3)}$, and $r(\overline v_1|\overline v_2,\overline v_3,\overline v_4,\overline v_5)$ denotes the cross-ratio of the projection of the quadruple $(v_2,v_3,v_4,v_5)$ to $P(\C^3/\langle v_1\rangle )=P^1_\C$. Also, $\Alt_n([x])=\sum_{\sigma\in S_n}\sgn(\sigma)[\sigma(x)]$. 

Letting $G_q(p)=\Z[\widetilde{\Gr}(p,q+1)^*]$, there is a canonical map $\Gamma\colon H_*(\SL(p,\C))\to H_*(G_*(p))$, and Goncharov showed that the composition
\begin{equation}
\xymatrix{H_5(\SL(3,\C))\ar[r]^-{\Phi}&H_5(G_*(3))\ar[r]^-{g_5}&H^1(\Gamma(\C,3))_\Q\ar[r]^-{\L_3}&\R}
\end{equation}
is a non-zero rational multiple of the Borel regulator. Defining $\Gamma_i(\C,n)=\Gamma^{2n-i}(\C,n)$ one may view~\eqref{eq:GonDiagram} as a chain map $G_*(3)\to \Gamma_*(\C,3)_\Q$. 

\subsection{Cluster ensembles and differential $\widehat\L_n$ relations}
The varieties $\widetilde{\Gr}(p,q)$ are cluster ensembles in the sense of Fock and Goncharov~\cite{FockGoncharovClusterEnsembles}. The only thing we shall need about cluster ensembles is that there are two types of coordinates called $\A$-coordinates and $\X$-coordinates. The $\A$-coordinates are regular functions and include the Pl\"ucker coordinates. The $\X$-coordinates are monomial expressions in the $\A$-coordinates satisfying that if $X$ is an $\X$-coordinate, then $1+X$ has a canonical factorization as a monomial in the $\A$-coordinates. One may think of them as generalizations of cross-ratios.
 We note that $\widetilde{\Gr}(p,q)$ has finitely many $\A$ and $\X$ coordinates if and only if $(p-2)(p+q-2)<4$ (see e.g.~\cite{Scott,MotivicAmplitudes}).

If we regard an $\A$-coordinate as a formal variable, each $\X$-coordinate determines a generator $[X,1+X]$ of $\widetilde S_1\times\widetilde S_1$ (using multiplicative notation; see Section~\ref{sec:Examples}). Given a finite collection $C$ of $\X$-coordinates of $\widetilde{\Gr}(p,q)$, it becomes a simple linear algebra problem to determine if
\begin{equation}\label{eq:DiffClusterRel}
\alpha=\sum_{X\in C} r_X[X,1+X]\in\Z[\widetilde S_1\times\widetilde S_1]
\end{equation}
is a differential $\widehat\L_n$ relation. One always has the \emph{inversion relations}
\begin{equation}\label{eq:InversionRel}
[X,1+X]+(-1)^n[X^{-1},\frac{1+X}{X}]
\end{equation}
but when $n>3$ there seems to be no other relations of the form~\eqref{eq:DiffClusterRel}.

\begin{remark}\label{rm:C++Realizations}
Since we are primarily interested in realizations in $\Z[\widehat\C]$, we may consider $[\frac{X}{1+X},\frac{1}{1+X}]$ instead of $[X,1+X]$. We believe that this is in fact more natural.
\end{remark}

\begin{example}
For $\widetilde{\Gr}(3,6)$ there are 22 $\A$-coordinates, the 20 Pl\"ucker coordinates as well as 2 additional coordinates
\begin{equation}
y_1=\det(v_1\times v_2,v_3\times v_4,v_5\times v_6),\qquad y_2=\det(v_2\times v_3,v_4\times v_5,v_6\times v_1),
\end{equation}
where the $v_i$ are the columns of a representing matrix. There are $104$ $\X$-coordinates, which can all be obtained from the six $\X$-coordinates
\begin{equation}\label{eq:GeneratingXCoordinates}
\frac{a_{136}a_{235}}{a_{356}a_{123}},\quad \frac{a_{126} a_{145}}{a_{124} a_{156}},\quad \frac{a_{156} a_{236 }a_{345}}{a_{136} a_{235} a_{456}},\quad \frac{a_{123} a_{156}}{a_{126} a_{135}}\quad \frac{a_{136} a_{145} a_{235}}{a_{123} a_{156} a_{345}},\quad \frac{a_{123} a_{456}}{y_1}
\end{equation}
by inversion $x\mapsto x^{-1}$ and the action by the (dihedral) group generated by 
\begin{equation}
\sigma=(1,2,3,4,5,6)\in S_6,\qquad \tau=(1,6)(2,5)(3,4)\in S_6.
\end{equation}
Note that $\tau$ fixes $y_1$ and $y_2$ and $\sigma$ flips them. The number of elements in the $\langle\sigma,\tau\rangle$-orbits of the six $\X$-coordinates in~\eqref{eq:GeneratingXCoordinates} are 12, 12, 12, 6, 6, and 4, respectively.

For the $\X$-coordinates in~\eqref{eq:GeneratingXCoordinates}, $1+X$ is given by
\begin{equation}\frac{a_{135} a_{236}}{a_{123} a_{356}},\quad \frac{a_{125} a_{146}}{a_{124} a_{156}},\quad \frac{a_{356} y_2}{a_{136} a_{235} a_{456}},\quad\frac{a_{125} a_{136}}{a_{126}a_{135}},\quad\frac{a_{135} y_2}{a_{123} a_{156} a_{345}}, 
\quad \frac{a_{124} a_{356}}{y_1}.
\end{equation}

Up to the inversion relations~\eqref{eq:InversionRel} there are 25 linearly independent $\widehat\L_2$ relations and a single $40$ term differential $\widehat\L_3$ relation $R_{40}$. The 40 term relation is a lift of the 40 term relation for $\L_3$ found in~\cite{MotivicAmplitudes}. All lower levels are killed modulo 6-torsion.
\end{example}

\subsection{A lift of Goncharov's regulator}
Let $\widetilde{\Gr}(p,q)^{\A\neq 0}(\C)$ denote the points with non-zero $\A$-coordinates, and fix a branch of logarithm. Regarding the $\A$-coordinates as formal variables, and letting $X_{p,q}$ be the set of $\X$-coordinates of $\widetilde{\Gr}(p,q)$, we may regard an element of $\Z[X_{p,q}]$ either as an element in $\Z[\widetilde S_1\times\widetilde S_1]$ or as a map $\Z[\widetilde{\Gr}(p,q)^{\A\neq 0}(\C)]\to \Z[\widehat\C]$. Consider the element
\begin{equation}
\begin{split}\eta=\Alt_{\langle\sigma^2,\tau\rangle}\Big(\Big. \Big[\frac{a_{146}a_{245}}{a_{145}a_{246}},\frac{a_{124}a_{456}}{a_{145}a_{246}}\Big]+\Big[\frac{a_{124}a_{456}}{a_{145}a_{246}},\frac{a_{146}a_{245}}{a_{145}a_{246}}\Big]+\Big[\frac{a_{123}a_{146}a_{245}}{a_{124}y_2},\frac{a_{126}a_{145}a_{234}}{a_{124}y_2}\Big]\\
\phantom{ssssssss}+\Big[\frac{a_{126}a_{145}a_{234}}{a_{124}y_2},\frac{a_{123}a_{146}a_{245}}{a_{124}y_2}\Big]+\Big[\frac{a_{124}a_{156}}{a_{125}a_{146}},\frac{a_{126}a_{145}}{a_{125}a_{146}}\Big]+\Big[\frac{a_{126}a_{145}}{a_{125}a_{146}},\frac{a_{124}a_{156}}{a_{125}a_{146}}\Big]\Big.\Big)\\
\phantom{ss}-\Alt_{\langle\tau\rangle}\Big(\Big[\frac{a_{126}a_{234}a_{456}}{a_{246}y_2},\frac{a_{146}a_{236}a_{245}}{a_{246}y_2}\Big]+\Big[\frac{a_{146}a_{236}a_{245}}{a_{246}y_2},\frac{a_{126}a_{234}a_{456}}{a_{246}y_2}\Big]\Big).
\end{split}
\end{equation}

\begin{theorem} There is a commutative diagram
\begin{equation}\label{eq:GonDiagramLift}
\cxymatrix{{{\Z[\widetilde{\Gr}(3,7)^{\A\neq 0}(\C)]}\ar[r]^-{\partial}\ar[d]&{\Z[\widetilde{\Gr}(3,6)^{\A\neq 0}(\C)]}\ar[r]^-{\partial}\ar[d]^-{f_5}&{\Z[\widetilde{\Gr}(3,5)^*(\C)]}\ar[r]^-{\partial}\ar[d]^-{f_4}&{\Z[\widetilde{\Gr}(3,4)^*(\C)]}\ar[d]^-{f_3}\\0\ar[r]&{\widehat\B_3(\widehat\C)}\ar[r]^-{\delta}&{(\widehat\B_2(\widehat\C)\otimes\C)}\ar[r]^-{\delta}&{\wedge^3(\C)}}}
\end{equation}
with maps defined by
\begin{align}
\begin{split}
f_3&=\Alt_{\langle (1,2,3,4)\rangle}(\widetilde a_{134}\wedge \widetilde a_{124}\wedge \widetilde a_{123}),\\
f_4&=-\Alt_{\langle (1,2,3,4,5)\rangle}\Big(\big[\frac{a_{125}a_{134}}{a_{124}a_{135}},\frac{a_{123}a_{145}}{a_{124}a_{135}}\big]\otimes (\widetilde a_{123}+\widetilde a_{145})\Big),\\
\begin{split}f_5&=
\eta.
\end{split}
\end{split}
\end{align}

If $G_*^{\A\neq 0}(3)$ denotes the top chain complex, the composition
\begin{equation}
\xymatrix{{H_5(G_*^{\A\neq 0}(3))}\ar[r]^-{f_5}&{H^1(\widehat\Gamma(\C,3))}\ar[r]^-{r}&{\B_3(\C)_\Q}}
\end{equation}
agrees with Goncharov's map $g_5$.
\end{theorem}

\begin{proof} The proof that $\delta f_4=f_3\partial $ is elementary. We next show that $\delta f_5=f_4\partial$. We have
\begin{equation}
\delta f_5-f_4\partial=\sum A_a\otimes \widetilde a,
\end{equation}
where the sum is over the Pl\"ucker-coordinates and $y_2$. For example, we have
\begin{multline}
A_{a_{124}}=\Big[\frac{a_{124} a_{456}}{a_{145} a_{246}}, \frac{a_{146} a_{245}}{a_{145} a_{246}}\Big]-\Big[\frac{a_{123} a_{146} a_{245}}{a_{124} y_2}, \frac{a_{126} a_{145} a_{234}}{a_{124 }y_2}\Big]-\Big[\frac{a_{126} a_{145} a_{234}}{a_{124} y_2},\frac{a_{123} a_{146} a_{245}}{a_{124} y_2}\Big]\\+\Big[\frac{a_{124} a_{156}}{a_{125} a_{146}},\frac{a_{126} a_{145}}{a_{125} a_{146}}\Big]+[\frac{a_{123} a_{245}}{a_{124} a_{235}},\frac{a_{125} a_{234}}{a_{124} a_{235}}]+[\frac{a_{125} a_{234}}{a_{124} a_{235}}, \frac{a_{123} a_{245}}{a_{124} a_{235}}]\\+[\frac{a_{146} a_{245}}{a_{145} a_{246}}, \frac{a_{124} a_{456}}{a_{145} a_{246}}]+[\frac{a_{126} a_{145}}{a_{125} a_{146}}, \frac{a_{124} a_{156}}{a_{125} a_{146}}],
\end{multline}
which is a sum of instances of $[A,B]+[B,A]$. One can show (see Zickert~\cite[Rm.~8.7]{ZickertAlgK}) that $24([(u,v)]+[(u,v)])$ is a consequence of the lifted 5-term relations, so it follows that $A_{a_{124}}\otimes \widetilde a_{124}=0\in\widehat\B_2(\widehat\C)\otimes\C$. Similarly, $A_a\otimes\widetilde a$ is zero for the $\mathcal A$-coordinates $a_{125}, a_{134}, a_{256}, a_{346}, a_{356}, a_{135}, a_{246}$, and $y_2$. For the remaining terms we obtain
\begin{equation}
\delta f_5-f_4\partial=\Alt_{\langle \sigma^2,\tau\rangle}\big(A_{a_{123}}\otimes \widetilde a_{123}+A_{a_{136}}\otimes \widetilde a_{136}\big).
\end{equation}
One then checks that $A_{a_{123}}$ and $A_{a_{136}}$ are (up to instances of $[A,B]+[B,A]$) a sum of lifted five term relations. For example, we have $r(A_{a_{123}})=R_1+R_2+R_3$, where
\begin{equation}
\begin{aligned}
R_1&=[\frac{a_{123} a_{345}}{a_{134} a_{235}}]+[\frac{a_{125} a_{345}}{a_{135} a_{245}}]+[\frac{a_{125} a_{234}}{a_{124} a_{235}}]-[\frac{a_{124} a_{345}}{a_{134} a_{245}}]-[\frac{a_{125} a_{134}}{a_{124} a_{135}}]\\\\
R_2&=-[\frac{a_{126} a_{345}}{a_{136} a_{245}}]-[\frac{a_{126} a_{145} a_{234}}{a_{124} y_2}]+[\frac{a_{124} a_{345}}{a_{134} a_{245}}]-[\frac{a_{123} a_{146} a_{345}}{a_{134} y_2}]+[\frac{a_{126} a_{134}}{a_{124} a_{136}}]\\
	R_3&=-[\frac{a_{126} a_{135}}{a_{125} a_{136}}]+[\frac{a_{123} a_{156} a_{345}}{a_{135} y_2}]+[\frac{a_{126} a_{345}}{a_{136} a_{245}}]-[\frac{a_{125} a_{345}}{a_{135} a_{245}}]-[\frac{a_{123} a_{156} a_{245}}{a_{125} y_2}].\\
\end{aligned}
\end{equation}
It is not difficult to check that these are (up to instances of $[x]+[1-x]$) five term relations. This concludes the proof that $\delta f_5=f_4\partial$. 

We now prove that $f_5\partial=0$.
To see this we first compute $w_3(\eta)$ (recall Definition~\ref{def:wnalpha}). A straightforward computation shows that $w_3(\eta)=\Alt_{\langle \sigma\rangle}(\phi)$, 
where
\begin{equation}\label{eq:w3eta}
w_3(\eta)=\Alt_{\langle \sigma\rangle}(\phi),\qquad \phi=-\Alt_{\langle (1,2,3,4,5)\rangle}\Big(w_2(\big[\frac{a_{125}a_{134}}{a_{124}a_{135}},\frac{a_{123}a_{145}}{a_{124}a_{135}}\big]) (\widetilde a_{123}+\widetilde a_{145})\Big)
\end{equation}
From this we conclude that $w_3(f_5\partial)=0$. Since $\delta f_5=f_4\partial$ it follows that $f_5\partial$ kills lower levels modulo instances of $[A,B]+[B,A]$. In fact, these terms cancel out, so $f_5\partial$ kills lower levels. One easily checks that $f_5\partial$ has proper ambiguity, so $f_5\partial$ is constant in $\widehat\B_3(\widehat\C)$. One then checks that $f_5\partial$ vanishes on the nose
for the element
\begin{equation}\label{eq:GaugeMatrix}
\begin{pmatrix}
1&0&0&1&1&-2&1\\0&1&0&1&-2&1&-\frac{7}{8}\\0&0&1&\frac{1}{4}&1&1&-\frac{7}{2}
\end{pmatrix}.
\end{equation}
This concludes the proof that $f_5\partial=0$.
To see that $r(f_5)$ agrees with $g_5$ in homology, it is enough to prove that $\frac{1}{720}\Alt_6(r(\eta))=f_5\in\B_3(\C)_\Q$. 
Since $[x]+[1-x]=-[-\frac{1-x}{x}]+[1]\in\B_3(\C)$ we have 
\begin{equation}
r(\eta)=-\Alt_{\langle\sigma^2,\tau\rangle}\Big( \Big[-\frac{a_{124}a_{456}}{a_{146}a_{245}}\Big]+\Big[-\frac{a_{126}a_{145}a_{234}}{a_{123}a_{146}a_{245}}\Big]+\Big[-\frac{a_{126}a_{145}}{a_{124}a_{156}}\Big]\Big)+\Alt_{\langle\tau\rangle}\Big([-\frac{a_{146}a_{236}a_{245}}{a_{126}a_{234}a_{456}}\Big]\Big).
\end{equation}
The fact that $\frac{1}{720}\Alt_6(r(\eta))=f_5\in\B_3(\C)_\Q$ can now be verified by a term by term comparison, which does not use any relations in $\B_3(\C)_\Q$.
\end{proof}

\begin{remark}
It follows that $r(f_5)$ is equivalent to Goncharov's formula, but has the advantage of being defined with integral coefficients and without symmetrization.
\end{remark}

\begin{remark} It follows from~\eqref{eq:w3eta} that $\eta+\sigma(\eta)$ is a differential $\widehat\L_3$ relation with 80 terms. It has proper ambiguity and all lower levels are killed on the nose. It vanishes for the matrix~\eqref{eq:GaugeMatrix} with the last column removed, so is identically 0. From this it follows that $f_5$ is skew symmetric under the action by the dihedral group.
\end{remark}

\begin{remark}
The diagram~\eqref{eq:GonDiagramLift} may be defined over an arbitrary field. The righthand square always commutes, the middle square commutes modulo 24 torsion, and the left square commutes for any field where all $\A$-coordinates of~\eqref{eq:GaugeMatrix} are defined and non-zero.
\end{remark}

\bibliographystyle{alpha}
\bibliography{BibFile}

\end{document}